
\documentclass[12pt,a4paper]{amsart}
\usepackage{a4,amssymb,latexsym}

\title[The QKP hierarchy]{The quaternionic KP hierarchy and conformally immersed 2-tori 
in the 4-sphere.}
\author{Ian McIntosh}
\address{Department of Mathematics \\ University of York \\ Heslington, York
YO10 5DD, UK}
\email{ian.mcintosh@york.ac.uk}
\subjclass[2000]{Primary 35Q53; Secondary 14H70, 53A30, 53C42}
\keywords{Integrable systems, conformally immersed tori, quaternionic holomorphic curves, spectral
curves}
\date{Dec 02, 2010}

\newcommand{\qh}{quaternionic holomorphic\ }

\renewcommand{\H}{\mathbb{H}}
\newcommand{\C}{\mathbb{C}}

\newcommand{\R}{\mathbb{R}}
\renewcommand{\P}{\mathbb{P}}
\newcommand{\CP}{\mathbb{CP}}
\newcommand{\Ci}{\mathbb{C}_\infty}

\newcommand{\HP}{\mathbb{HP}}
\newcommand{\Z}{\mathbb{Z}}
\newcommand{\N}{\mathbb{N}}
\newcommand{\HH}{\underline{\mathbb{H}}}
\renewcommand{\forall}{\ \text{for all}\ }

\newcommand{\caA}{\mathcal{A}}
\newcommand{\caB}{\mathcal{B}}
\newcommand{\caD}{\mathcal{D}}
\newcommand{\caE}{\mathcal{E}}
\newcommand{\caG}{\mathcal{G}}

\newcommand{\caJ}{\mathcal{J}}
\newcommand{\caL}{\mathcal{L}}
\newcommand{\caM}{\mathcal{M}}
\newcommand{\caO}{\mathcal{O}}
\newcommand{\caS}{\mathcal{S}}
\newcommand{\caU}{\mathcal{U}}
\newcommand{\caV}{\mathcal{V}}
\newcommand{\caZ}{\mathcal{Z}}

\newcommand{\bt}{\mathbf{t}}

\newcommand{\fp}{\mathfrak{p}}
\newcommand{\fq}{\mathfrak{q}}

\newcommand{\dx}{\partial_x}
\newcommand{\dy}{\partial_y}
\newcommand{\ev}{\mathrm{ev}}
\newcommand{\sm}{\mathrm{sm}}
\newcommand{\QKP}{\mathrm{QKP}}
\newcommand{\KP}{\mathrm{KP}}
\newcommand{\Hom}{\operatorname{Hom}}
\newcommand{\End}{\operatorname{End}}
\newcommand{\Sp}{\operatorname{Sp}}
\newcommand{\Jac}{\operatorname{Jac}}
\newcommand{\Pic}{\operatorname{Pic}}

\newcommand{\Spec}{\operatorname{Spec}}
\newcommand{\alg}{\mathrm{alg}}

\newcommand{\res}{\mathrm{res}}

\hyphenation{Kon-op-el-chen-ko}

\newtheorem{thm}{Theorem}[section]
\newtheorem{prop}[thm]{Proposition}
\newtheorem{lem}[thm]{Lemma}
\newtheorem{cor}[thm]{Corollary}

\newtheorem{defn}[thm]{Definition}
\theoremstyle{remark}
\newtheorem{exam}[thm]{Example}
\newtheorem{rem}[thm]{Remark}

\numberwithin{equation}{section}

\begin{document}

\begin{abstract}
The quaternionic KP hierarchy is the integrable hierarchy of p.d.e obtained by replacing the
complex numbers with the quaternions in the standard construction of the KP hierarchy and 
its solutions; it is equivalent to
what is often called the Davey-Stewartson II hierarchy. This article studies its relationship with
the theory of conformally immersed tori in the 4-sphere via quaternionic holomorphic geometry.
The Sato-Segal-Wilson construction of KP solutions is adapted to this
setting and the connection with \qh curves is made. We then compare three different notions 
of ``spectral curve'': the QKP 
spectral curve; the Floquet multiplier spectral curve for the related Dirac operator;
and the curve parameterising Darboux transforms of a conformal 2-torus in the 4-sphere.

\end{abstract}

\maketitle

\section{Introduction.}
There is a well established connection between the Davey-Stewartson 
(DS) hierarchy and conformal immersions of tori in $\R^3$ and $\R^4$
(a good survey can be found in \cite{Taisurv}). One compelling reason for studying
this correspondence is that the simplest ``first
integral'' of this hierarchy represents the Willmore functional, and it may be
that a strategy based upon integrable systems could resolve
the Willmore conjecture (see \cite{Sch} and \cite{FerLPP} for two different
perspectives on this).

But the original point of view makes it difficult 
to see the conformal invariance and presents other problems in $\R^4$ (see \cite{Tai06}). 
These difficulties are avoided by using the theory of quaternionic holomorphic curves, developed 
by the Berlin school \cite{PedP,BurFLPP,FerLPP,Boh,BohLPP,Boh08}.
The conformal $4$-sphere is thought of as $\HP^1$;
a conformal immersion of a surface $M$ becomes a quaternionic line bundle over $M$ whose 
dual bundle possesses a canonical quaternionic holomorphic structure $D$, the 
quaternionic analogue of a $\bar\partial$-operator. 
This operator $D$ is essentially the Dirac operator which plays the central role 
in the DS hierarchy.  

The relationship between the DS hierarchy and 
the geometry of the immersed torus is still not very well understood. There is at present no 
direct definition of the flows as deformations of a torus
(although, see \cite{BurPP} for an attempt at this);
there is only an indirect construction for tori of finite type via the linear motion on the
spectral curve. For the spectral curve, Taimanov \cite{Tai98} proposed 
the normalisation of the analytic curve of Floquet multipliers of the Dirac operator:
generically this non-compact with ``infinite genus''. 
The quaternionic perspective \cite{BohLPP,Boh08,BohPP}
provides a conformally invariant construction, and 
also gives a geometric interpretation of the smooth points on this curve, as parameterising
``Darboux transformations'' of the immersed torus. 

This article shows how to use the quaternionic point of view to give a more natural 
link to the integrable hierarchy. The hierarchy can be
obtained by replacing the field $\C$ with the division algebra $\H$ in the standard
constructions of the KP hierarchy and its solutions (particularly, following
\cite{Wil79,Wil81,SegW}). This point of view  is sufficiently rewarding that I feel the 
new name ``quaternionic KP'' (QKP) hierarchy is justified.
The plan of this article is to start with the QKP theory and work towards \qh geometry, paying
particular attention to the spectral data.

Properly thought of, the QKP hierarchy is a collection of Lax equations $L_t=[P,L]$ for a
\emph{pseudo-differential} operator with quaternionic coefficients
\[
L = i\dy + U_0 + U_1\dy^{-1}+\cdots,
\]
and a differential operator $P$. These equations provide
a commuting family of derivations on a real differential
algebra. One of these provides a real derivation $\dx$ for which 
\[
\caD=\frac{1}{2}(\dx+i\dy + U_0),
\]
plays the role of the Dirac operator. 

This construction is purely algebraic: to reconnect with analysis, we adopt the 
Sato-Segal-Wilson \cite{SegW} point of view
and realise the QKP equations as flows on a manifold, the quotient of an infinite 
dimensional Grassmannian.  This Grassmannian
$Gr(\H)$ possesses pretty much all the properties that hold for KP, provided we are careful to
distinguish between the cases where the Dirac potential $U_0$ is trivial or non-trivial. 
In particular, we show that:
\begin{enumerate}
\item the QKP flows correspond to the action of an infinite dimensional abelian Lie group
$\Gamma_+$ on $Gr(\H)$ and the QKP solutions are parameterized by a quotient manifold $\caM$. 
This manifold is the disjoint union of two manifolds, $\caM_\KP$, which is a copy of the 
ordinary KP phase space and
parameterises QKP solutions for which $U_0$ is trivial, and
$\caM_\QKP$, which parameterizes solutions with non-trivial $U_0$.
\item the points of $\caM$ which have finite dimensional $\Gamma_+$-orbits correspond precisely
to the solutions which can be constructed from spectral data (i.e., a complete algebraic curve
$X$ and other algebraic data on it), and the orbits themselves are open
subvarieties of (generalised) Jacobi varieties. We call these ``of finite type''.
\end{enumerate}
The object which mediates between Lax equations and points of $Gr(\H)$ is the (quaternionic) Baker
function. This is an $\H$-valued solution to the spectral problem $L\psi=-\psi\zeta$ which admits 
a left $\H$ Fourier series expansion of the form
\[
\psi(x,y,\zeta) = (1+a_1(x,y)\zeta^{-1}+\cdots)\exp[(x+iy)\zeta],\quad \zeta\in S^1.
\]
Points of the Grassmannian $Gr(\H)$ parameterise Baker functions which converge on $|\zeta|>1$.  
For the QKP solutions of finite type there is a
representative Baker function which extends to a globally
meromorphic function on the spectral curve. We call this the global Baker function: it plays
an important role in translating between the different notions of spectral curve. 

The Baker function satisfies $\caD\psi=0$ for all $\zeta$ and 
can therefore be used to obtain homogeneous coordinates of a \qh curve $f:\C\to\HP^n$, by 
evaluating $\psi$ at different values of $\zeta$. When $\psi$ is global these points give 
a divisor $\fq$ on $X$ and we show that the (double) periodicity of such a curve is 
governed by the Jacobi variety of a singularisation $X_\fq$ of $X$. 
In that case $f$ projects to a \qh torus in $\HP^1$
under any homogeneous projection from $\HP^n$: we can think of $f$ as a 
common ``linear system'' for each of these.

In this passage between QKP and \qh curves two obstacles remain. The first is that 
the periodicity conditions on the QKP spectral data imply
that the whole QKP operator $L$ is ``periodic'' (the precise meaning of this is given in Theorem
\ref{thm:periodic}(c)).
However, given a conformal torus then it seems one only obtains 
``periodicity'' of the Dirac operator $\caD$. There is a Baker-type function in the kernel of the
Dirac operator when the multiplier spectral curve has finite type. But I was unable to show 
that the two Baker functions coincide without knowing \emph{a priori} that the QKP operator $L$ is
``periodic'' (Remark \ref{rem:fperiod}). Secondly, when the multiplier spectral curve is not of 
finite type it seems there can be no convergent
multiplier Baker function, which raises the problem of how to make the passage between QKP and
conformal tori at all. In particular, it is not clear how to describe integrable deformations in
this case. This may be possible using the theory of infinite genus Riemann surfaces
\cite{FelKT,Schbook},
but that would take us out of the Grassmannian class of solutions discussed here.

In the case where the two Baker functions are the same section 6 relates three spectral
curves: the QKP spectral curve, the multiplier spectral curve, and what we call
the Darboux spectral curve. The Darboux spectral curve has a particularly neat characterisation
via an Abel map image of a punctured version of the QKP spectral curve $X$: when $X$ is smooth the
punctured curve is just $X\setminus\fq$, whose Abel image lies in $\Jac(X_\fq)$, which
possesses a natural map to $\CP^{2n+1}$. 
The multiplier and Darboux spectral curves can both be obtained from $X_\fq$, which suggests
that the singular curve $X_\fq$ is the correct object for 
classifying \qh tori of finite type. 

\textit{Acknowledgments.} 
I would like to thank Christoph Bohle, Katrin Leschke, Franz Pedit and Ulrich Pinkall for 
letting me see preliminary versions of \cite{BohLPP,BohPP} and for 
explaining their work on quaternionic holomorphic curves to me over a number of years.
In particular, the idea that there might be a ``quaternionic KP'' explanation for the
spectral curve of a conformally immersed torus is due to Ulrich Pinkall.
Some of these results were announced in the conference proceedings \cite{McI04}.

\section{Quaternionic holomorphic curves.}\label{sec:quat}

We begin by summarising the correspondence between
conformally immersed surfaces in $S^4\simeq\HP^1$ (as the space of left quaternionic lines in
$\H^2$) and quaternionic holomorphic 
curves (based on \cite{BurFLPP,FerLPP}).  Each immersed surface $f:M\to S^4$ can be equated with a
smooth (quaternionic) line subbundle $L$ of the trivial bundle 
$\HH^2=M\times\H^2$ via $f(p)=L_p$. 

The reader is warned at this point that we do not
adopt the convention of \cite{FerLPP} that all quaternionic bundles are right $\H$ bundles,
since this does not fit well with our construction of the QKP hierarchy using differential
operators, with coefficients over $\H$, acting on the left on $\H$-valued functions. 
Therefore $L$ will be a left $\H$-bundle and its quaternionic dual $L^*=\Hom_\H(L,\H)$ a 
right $\H$-bundle. 

The theory of \qh curves encodes the conformality of the map $f$ as follows.
The differential $df$ of any immersed surface $f:M\to\HP^1$ can be represented,
after the standard manner for maps into projective spaces, by 
$\delta f\in\Omega^1_M(\Hom(L,\HH^2/L))$ where
\[
\delta f:\psi\mapsto d\psi\bmod L,\quad \psi\in\Gamma(L).
\]
Here $\Gamma(L)$ denotes the space of smooth sections of $L$.
We also adopt the convention that, for any vector bundle valued 1-form $\omega$, $*\omega =
\omega\circ J_M$ where $J_M$ is the complex structure on $M$. Now $f$ is
a conformal immersion if and only if
there exists $J\in\End_\H(L)$ with $J^2=-I$ for which $*\delta f = \delta
f\circ J$. The pair $(L,J)$ is called a quaternionic holomorphic line bundle.

Further, $L^*$ inherits a complex structure, which we will also call $J$, and a 
canonical quaternionic holomorphic structure
\[
D:\Gamma(L^*)\to\Omega^1(L^*).
\]
This is an $\H$-linear operator satisfying, for any $\Psi\in \Gamma(L^*)$ and
$\mu:M\to \H$:
\[
*D\Psi  = -JD\Psi,\quad
D(\Psi\mu)  = (D\Psi)\mu +\frac{1}{2}(\Psi d\mu +J\Psi*d\mu).
\]
It is characterised by the property that the natural inclusion
$(\H^2)^*\subset \Gamma(L^*)$ maps into the kernel of $D$, i.e., into
the space $H^0_D(L^*)$ of quaternionic holomorphic sections (see \cite[\S 4.3]{BurFLPP}). 

The definitions above apply equally well to line subbundles of $\HH^{n+1}$ and
provide the notion of a \qh curve in $\HP^n$. 
Conversely, given a complex quaternionic line bundle $(E,J)$ equipped with a \qh
structure $D$, define $H^0_D(E)$ to be the kernel of $D$. When $M$ is
compact this space is finite dimensional (indeed, a Riemann-Roch formula can
be derived to calculate this dimension: see \cite{FerLPP}). 
The ``Kodaira construction'' follows through to give a \qh curve
\begin{equation}\label{eq:Kodaira}
f:M\to \P H^0_D(E)^*;\quad p\mapsto E_p^*.
\end{equation}
A two dimensional $\H$-subspace
$H\subset H^0_D(E)$ can be thought of as a linear system, in the sense of
algebraic curve theory. Provided $H$ is well-positioned (has no base points) the dual projection
$H^0_D(E)^*\to H^*$ gives us a conformal immersion $f:M\to \P H^*\simeq\HP^1$.

\subsection{The Dirac operator for a conformal torus in $S^4$ with flat normal
bundle.}\label{subsec:Dirac}
For any complex quaternionic line bundle $(L,J)$ we define $\hat L$ to be the complex line bundle
whose fibres are 
\[
\hat L_P = \{\sigma\in L_P;J\sigma = i\sigma \}.
\]
By definition, $\deg(L)$ is the degree of the complex line bundle $\hat L$. 
Bohle \cite[p.19]{Boh} showed that when $M$ is a compact Riemann surface
the degree of the normal bundle of $f$ is $2\deg(L)-\deg(K_M)$, where $K_M$ is the canonical bundle. 

Suppose we have a complex (right) quaternionic line bundle $(E,J)$ over a torus 
$M$, with \qh structure $D$. 
Let us assume this has degree zero, i.e., $\deg(\hat E)=0$.
The complex bundle $\hat E$ inherits the complex holomorphic structure
\[
\bar\partial_J = \frac{1}{2}(D-JDJ).
\]
The moduli space of degree zero holomorphic line bundles is isomorphic to the
moduli space $H^1(M,S^1)$ of Hermitian line bundles possessing a flat Hermitian connexion, so
$\bar\partial_J$ can be extended in a unique way to a flat connexion $\hat\nabla$ 
which is also Hermitian with respect to some Hermitian inner product on $\hat E$. 
Now, if we represent $M$ as $\C/\Lambda$ for some lattice
$\Lambda$ and let $\pi:\C\to M$ denote the universal cover then we can
trivialise $\pi^*\hat E$ 
with a smooth $\hat\nabla$-parallel section $\Phi$ which is therefore
$\bar\partial_J$-holomorphic. This section is unique up to right multiplication by a complex constant
and has unimodular monodromy $\mu\in\Hom(\Lambda,S^1)$, i.e., for each
$\lambda\in\Lambda$ and all $z\in\C$
\[
\Phi(z+\lambda) = \Phi(z)\mu(\lambda).
\]
For any $\Psi\in\Gamma(E)$ we therefore have some $\H$-valued function
$\psi$ on $\C$ for which $\Psi= \Phi\psi$ and we observe that $\psi$ has
monodromy $\mu^{-1}$ along the lattice $\Lambda$. It follows that
\[
D\Psi= (D\Phi)\psi +\Phi\frac{1}{2}(d\psi+i*d\psi).
\]

Now recall the decomposition $D=\bar\partial_J + Q$, where $Q=(D+JDJ)/2$ is called the ``Hopf
field'', and write $Q\Phi = \Phi d\bar z U $ for some $U:\C\to\H$.
Since $Q$ anti-commutes with $J$ it follows that $U$ anti-commutes with $i$.
Then  
\begin{equation}
\label{eq:Dirac}
D(\Phi\psi)= (\Phi d\bar z) \caD\psi,\qquad \caD = \partial/\partial\bar z + U. 
\end{equation}
We call $\caD$ the Dirac operator. For simplicity let us define
\[
\ker(\caD) = \{\psi\in C^\infty(\C,\H);\caD\psi =0\}.
\]
Then we have an identification 
\begin{equation}\label{eq:H^0_D}
H^0_D(E)\simeq\{\psi\in \ker(\caD);\psi(z+\lambda) = \mu^{-1}(\lambda)\psi(z) 
\forall\lambda\in\Lambda\}
\end{equation}
Notice that $\caD$ does not in general preserve the space of functions on 
the torus $M$, since
for any $\lambda\in\Lambda$ and all $z\in\C$
\begin{equation}\label{eq:monodromy}
U(z+\lambda) = \mu(\lambda)^{-1}U(z)\mu(\lambda) = U(z)\mu(\lambda)^2.
\end{equation}
Therefore $\caD$ is doubly periodic if and only if $\hat E$ is a spin bundle. We know from
\cite{PedP} that this is the case which corresponds to immersions into $\R^3$ (see also
\cite{Boh}). By \eqref{eq:monodromy} $|U|$ is always a function on $M$ itself and the 
$L^2$-norm of $U$ over $M$ is
called the Willmore energy of $\caD$: it is essentially the Willmore functional for the
corresponding conformally immersed torus.


\subsection{The multiplier spectrum and the Baker function.}
For any degree zero \qh (right $\H$)
line bundle $(E,D)$ over a torus $\C/\Lambda$ let $(\pi^*E,D)$ denote its pull-back
to the universal cover $\pi:\C\to\C/\Lambda$. Using this we construct the
multiplier spectrum of $(E,D)$:
\[
\Sp(E,D) = \{\chi\in\Hom(\Lambda,\C^\times); \exists\Psi\in H^0_D(\pi^*E),\ \Psi\neq 0,\ 
\lambda^*\Psi = \Psi\chi(\lambda)\forall \lambda\in\Lambda\}.
\]
By fixing two generators for $\Lambda$ we can identify $\Sp(E,D)$ with a subset of
$(\C^\times)^2$.
It has been shown (see \cite{Taisurv} for a survey of results) that $\Sp(E,D)$ is a complex
analytic curve. Right multiplication on $H^0_D(\pi^*E)$ by $j$ induces on
$\Sp(E,D)$ a real involution 
$\chi\mapsto\bar\chi$. Using the isomorphism \eqref{eq:H^0_D} we can write
\begin{multline}\label{eq:Sp}
\Sp(E,D) = \{\chi\in\Hom(\Lambda,\C^\times);\exists\psi\in\ker(\caD) ,\psi\neq 0,\\
\lambda^*\psi = \mu(\lambda)^{-1}\psi\chi(\lambda)\forall\lambda\in\Lambda\}.
\end{multline}
Here $\mu$ is the monodromy for the flat bundle $L^*$. 

The asymptotic structure of this spectrum is quite well understood (see \cite{BohLPP,Taisurv}) and is 
described by comparison with the spectrum of the ``vacuum'' operator $D_0=\bar\partial_J$, with
vacuum Dirac operator $\caD_0 = \partial/\partial\bar z$. Let us assume for the moment that 
$E$ is trivial (i.e.,
$\mu=1$). Taking $\psi(z,\zeta) = e^{z\zeta}$, for $\zeta \in\C$, we observe that $\caD_0\psi=0$ for
all $\zeta$ and 
\[
\psi(z+\lambda,\zeta) = \psi(z,\zeta)e^{\lambda\zeta},\quad\forall z\in\C,\ \lambda\in\Lambda.
\]
Hence $\Sp(E,D_0)$ contains an analytic curve of monodromies 
\[
C=\{(e^{\lambda_1\zeta},e^{\lambda_2\zeta})\in(\C^\times)^2;\zeta\in\C\},\quad \Lambda =
\Z\langle\lambda_1,\lambda_2\rangle.
\]
The full multiplier spectrum of $\caD_0$ is $C\cup\bar C$. These two branches of $\Sp(E,D_0)$ intersect
infinitely often in double points. When the monodromy $\mu$ is nontrivial the structure is the same
but with the branches $C$ and $\bar C$ shifted by appropriate factors.

The spectrum $\Sp(E,D)$ is asymptotic to $\Sp(E,D_0)$ in the sense that outside a compact subset of
$(\C^\times)^2$ the former is contained in an arbitrarily small tube around the latter. Away from
the double points of $\Sp(E,D_0)$ the curve $\Sp(E,D)$ is a graph over $\Sp(E,D_0)$, while near each  
double point $\Sp(E,D)$ either has a double point itself or is annular: in the
latter case $\Sp(E,D)$ resolves the double point into a handle. Now consider 
the curve $C$ along $|\zeta|\to\infty$. Either for every $R>0$
the domain
$|\zeta|>R$ contains at least one handle, or there is some $R$ for which $\Sp(E,D)$ only contains
double points. In the latter case $\Sp(E,D)$ must have two intersecting branches, one
of which can be parameterised by $\Delta = \{\zeta;|\zeta|> R\}$, thought of as a punctured
parameter disc about the point at infinity of $C$. Thus we have a map
\[
\chi:\Lambda\times\Delta\to\C^\times,\quad \chi(\cdot,\zeta)\in\Hom(\Lambda,\C^\times),
\]
which is holomorphic in $\Delta$ for each $\lambda$. The following result is a direct 
consequence of \cite[Theorem 4.1 and Lemma 5.1]{BohPP} (cf. \cite[\S 4]{Taisurv}).
\begin{thm}\label{thm:spectrum}
When $\Sp(E,D)$ has only double points along $\Delta$ there exists a function
\[
\psi:\C\times\Delta\to\H
\]
satisfying$:$
\begin{enumerate}
\item $\caD\psi = 0$ for all $\zeta\in\Delta$,
\item $\psi$ is holomorphic in $\zeta$ and $\lim_{\zeta\to\infty}\psi(z,\zeta)e^{-z\zeta} = 1$,
\item $\psi(z+\lambda,\zeta) = \mu(\lambda)^{-1}\psi(z,\zeta)\chi(\lambda,\zeta)$ for all $z\in\C$,
$\lambda\in\Lambda$.
\end{enumerate}
Further, $\psi$ is uniquely determined by $\psi(0,\zeta)$.
\end{thm}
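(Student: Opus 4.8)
The plan is to assemble Theorem \ref{thm:spectrum} out of two ingredients, both already available in the literature. The first is that $\Sp(E,D)$ is a complex analytic curve carrying a holomorphic ``kernel'' (or eigenline) bundle $\caL$, whose fibre over a point lying above $\chi$ is the complex line
\[
V_\chi = \{\psi\in\ker(\caD): \lambda^*\psi=\mu(\lambda)^{-1}\psi\chi(\lambda)\text{ for all }\lambda\in\Lambda\}
\]
(this is essentially \cite[Theorem 4.1]{BohPP}); note that $V_\chi$ is only a \emph{right} $\C$-vector space, since the monodromy factor $\chi(\lambda)$ is complex, and that for a generic point of $\Sp(E,D)$ it is one dimensional. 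The second is the asymptotic comparison of $\Sp(E,D)$, and of $\caL$, with the vacuum data near the point $\zeta=\infty$ of the branch $C$, which is the content of \cite[Lemma 5.2]{BohPP} (resting on the analysis of \cite{BohLPP}).

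First I would feed in the hypothesis. Because $\Sp(E,D)$ has only double points along $\Delta$, the branch of $\Sp(E,D)$ asymptotic to $C$ is, over $\Delta$, a smoothly embedded arc: each double point of $\Sp(E,D)$ is a node, so no handle is attached and no genus accumulates at infinity along this branch. Pulling back to the normalisation $\Sigma$ of $\Sp(E,D)$ therefore gives a holomorphic embedding of $\Delta$ into $\Sigma$, a once-punctured disc whose single end, at $\zeta=\infty$, can be filled by one point over which $\caL$ extends holomorphically. By \cite[Lemma 5.2]{BohPP} the extended bundle admits a holomorphic frame; writing its value as a function $\psi(\cdot,\zeta)\in V_{\chi(\cdot,\zeta)}$ of $z$, holomorphic in $\zeta\in\Delta$, one rescales by a nowhere-zero holomorphic function of $\zeta$ so that $\psi(z,\zeta)e^{-z\zeta}\to 1$ as $\zeta\to\infty$, matching the vacuum Baker function $e^{z\zeta}$; this rescaling is unique. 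Then (1) and (3) hold simply because $\psi(\cdot,\zeta)\in V_{\chi(\cdot,\zeta)}\subset\ker(\caD)$, and (2) holds by construction. For the last sentence: for generic $\chi$ the line $V_\chi$ is one dimensional over $\C$ and $z=0$ lies outside the divisor of its generator, so evaluation at $0$ injects $V_\chi$ into $\H$; hence if $\psi_1,\psi_2$ both satisfy (1)--(3) with the same function $\zeta\mapsto\psi(0,\zeta)$ they agree for generic $\zeta$, and so, by holomorphy in $\zeta$, for all $\zeta\in\Delta$.

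The step I expect to be the genuine obstacle — and the reason the two cited results are invoked rather than reproved — is the construction and normalisation of $\psi$ near infinity along $C$: one must know that $\caL$ extends holomorphically over $\zeta=\infty$ and carries a frame asymptotic to $e^{z\zeta}$. Carried out from scratch this is the delicate oscillatory-integral analysis behind \cite{BohLPP,BohPP}: substitute $\psi=e^{z\zeta}(1+\varphi)$ into $\caD\psi=0$, use that $U$ anti-commutes with $i$ to rewrite the equation as
\[
\partial_{\bar z}\varphi = -e^{-2i\Im(z\zeta)}\,U(1+\varphi),
\]
invert $\partial_{\bar z}$ on the torus against a flat line bundle of the appropriate monodromy, and observe that the oscillatory kernel gives the resulting integral operator norm $o(1)$ as $|\zeta|\to\infty$, so that a Neumann series produces a unique solution with $\varphi\to 0$; the hypothesis that only double points occur along $\Delta$ is exactly what guarantees this is not obstructed by handles. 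Everything else — translating the hypothesis into the geometry of $\Sigma$, deducing (1) and (3), and proving the uniqueness clause — is then routine.
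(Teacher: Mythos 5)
Your proposal is correct and follows the same route as the paper, which offers no independent proof but simply derives the statement from Theorem 4.1 and Lemma 5.2 of \cite{BohPP}; you invoke exactly these two results (the eigenline bundle over $\Sp(E,D)$ and its asymptotic normalisation near $\zeta=\infty$) and your connecting details, including the computation $\partial_{\bar z}\varphi=-e^{-2i\Im(z\zeta)}U(1+\varphi)$ exploiting that $U$ anti-commutes with $i$, are sound.
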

We will call this function $\psi(z,\zeta)$ the \textit{multiplier Baker function} for $\Sp(E,D)$. 
By the properties above it has the expansion (in left Fourier series)
\[
\psi(z,\zeta) = (1 +\sum_{j=1}^\infty a_j(z)\zeta^{-j})\exp(z\zeta),\quad a_j:\C\to\H,\ |\zeta|>R.
\]
By rescaling $\zeta$ we may as well assume $\Delta$ is the punctured
disc $|\zeta|>1$. In the next section we will introduce the QKP Baker function, and 
later on we will consider under what conditions we can show that the two agree.

The following example will help us understand later (section \ref{sec:spectral}) 
the difference between the multiplier spectrum and the
QKP spectral curve. 
It comes from the study of Hamiltonian stationary Lagrangian (HSL) tori in $\R^4$. 
\begin{exam}\label{ex:HSL}
Fix a torus $\C/\Lambda$ and equip $\C$ with its standard metric $\langle z,w\rangle =
\frac{1}{2}(z\bar w + \bar z w)$. We use this to embed the dual lattice $\Lambda^*$ in $\C$.
Fix some non-zero $\beta_0\in\Lambda^*$ and define $\beta(z) = 2\pi\langle\beta_0,z\rangle$. 
Now consider the complex quaternionic line bundle $(E,J)$ where $\pi:E\to\C/\Lambda$ is the trivial right 
$\H$-bundle and $J\sigma = N\sigma$ for $N=e^{j\beta}i$. This has \qh structure
\[
D\sigma = \frac{1}{2}(d\sigma + N*d\sigma).
\]
From \cite{LesR} we know that $f:\C/\Lambda\to\H$ is HSL with Maslov
class $\beta_0$ whenever $Df=0$. It is easy to check that $\Phi=e^{j\beta/2}i$ is a parallel
section of $\pi^*\hat E$ for the unique flat Hermitian connexion $\hat\nabla$ on $\hat E$ for which
$\hat\nabla'' = \bar\partial_J$, and the Dirac operator given by \eqref{eq:Dirac} is
\begin{equation}\label{eq:HSLDirac}
\caD = \frac{\partial}{\partial\bar z} - \frac{\pi}{2}\beta_0 j.
\end{equation}
Notice that $(\hat E,J)$ is trivial if $\beta_0/2\in\Lambda^*$ but otherwise a spin bundle, 
since the monodromy of $\Phi$ is given by $\mu(\lambda) = e^{i\beta(\lambda)/2}=\pm 1$ for
$\lambda\in\Lambda$. 

We can explicitly calculate $\Sp(E,D)$ for this example 
by writing any solution of $\caD\psi=0$ in the form
\[
\psi(z) = [\mu^{-1}(z)\varphi_1(z) + j\mu^{-1}(z)\varphi_2(z)]\exp[\pi i (\xi z + \eta\bar z)],
\]
where $\mu(z) = e^{i\beta(z)/2}$ and $\varphi_m$ are $\Lambda$-periodic complex valued functions,
and $\xi,\eta$ are complex parameters which parameterise the logarithmic spectrum. 
Since the complex valued functions $\varphi_m$ are both $\Lambda$-periodic they can be 
represented by Fourier series
\[
\varphi_m = \sum_{\alpha\in\Lambda^*}\varphi_{m\alpha}e^{2\pi i\langle\alpha,z\rangle}.
\]
There is a non-trivial solution to $\caD\psi=0$ if and only if there exists
$\alpha\in\Lambda^*$ for which the linear system
\begin{equation}\label{eq:HSLalpha}
\begin{pmatrix}
\alpha + \eta +\beta_0/2 & -i\beta_0/2 \\ 
 i\bar\beta_0/2 & \bar\alpha + \xi +\bar\beta_0/2
\end{pmatrix}
\begin{pmatrix}
\varphi_{1\alpha} \\ \varphi_{2\alpha}
\end{pmatrix}
=
\begin{pmatrix} 0 \\ 0 \end{pmatrix},
\end{equation}
has a non-trivial solution. Thus if we set 
\[
F_\alpha(\eta,\xi) = (\alpha + \eta +\beta_0/2)(\bar\alpha + \xi +\bar\beta_0/2) - |\beta_0|^2/4
\]
we can describe the logarithmic spectrum $\tilde\Sigma$ of $(E,D)$ as the union of infinitely 
many irreducible rational curves:
\[
\tilde\Sigma = \cup_{\alpha\in\Lambda^*} C_\alpha,\quad C_\alpha =
\{(\eta,\xi)\in\C^2;F_\alpha(\eta,\xi)=0\}.
\]
Even though each component $C_\alpha$ is smooth the curve $\tilde\Sigma$ possesses infinitely many
singularities caused by the intersections of components.
The dual lattice $\Lambda^*$ acts on $\tilde\Sigma$ by
$(\eta,\xi)\mapsto(\eta+\alpha,\xi+\bar\alpha)$ and it is easy to see that this identifies all
components $C_\alpha$ with one, say $C_0$. Thus 
\[
\Sp(E,D)\simeq \tilde\Sigma/\Lambda^*\simeq C_0/\sim
\]
where $\sim$ is the equivalence relation on $C_0$ defined by
\[
(\eta,\xi)\sim (\eta',\xi')\ \Leftrightarrow (\eta',\xi') = (\eta+\alpha,\xi+\bar\alpha)\
\text{for some}\ \alpha\in\Lambda^*.
\]
This identification creates the singularities of $\Sp(E,D)$, which are of two types.
\begin{enumerate}
\item For each non-zero $\alpha\in\Lambda^*$ the points $(\eta,\xi)$ and 
$(\eta+\alpha,\xi+\bar\alpha)$ are identified whenever
\[
\frac{\alpha}{\bar\alpha}Z^2+\alpha Z + \frac{|\beta_0|^2}{4}=0,\quad Z=\xi+\frac{\bar\beta_0}{2}.
\]
When the discriminant (which is proportional to $|\alpha|^2-|\beta_0|^2$) is
non-zero this gives two distinct intersections between $C_0$ and $C_\alpha$,
which will both be double points provided no other component of $\tilde\Sigma$ intersects here. 
The possibility of more components meeting in
one point, and hence higher order singularities, cannot be ruled out: the conditions are rather
subtle and depend upon both $\beta_0$ and $\Lambda$. When the discriminant vanishes $C_0$ and 
$C_\alpha$ meet tangentially at one point. This introduces a cuspidal singularity. 
\item The point $(0,0)$ is identified with every point in the set
\[
S=\{(\alpha,\bar\alpha);|\alpha+\frac{\beta_0}{2}|=|\frac{\beta_0}{2}|,\ \alpha\in\Lambda^*\}.
\]
This includes $(\beta_0,\bar\beta_0)$ and therefore one of the above cusps is folded into this
singularity.
\end{enumerate}
This leads to the multiplier Baker function
\begin{equation}\label{eq:HSLmult}
\psi(z,\zeta)= (1+j\frac{\pi\bar\beta_0}{2}\zeta^{-1})\exp(-\frac{\pi^2|\beta_0|^2\bar
z}{4}\zeta^{-1})e^{z\zeta},
\end{equation}
which is uniquely determined by its initial value
$\psi(0,\zeta) = 1+j(\pi\bar\beta_0/2)\zeta^{-1}$. It follows that the space
$H^0_D(E)$ of global \qh sections is spanned by the sections $\Phi\psi$ obtained by evaluating 
$\psi$ at each $\zeta = \pi i(\bar\alpha + \bar\beta_0/2)$ for $(\alpha,\bar\alpha)\in S$ 
(cf.\ \cite[Thm 2.9]{LesR}).
\end{exam}

\section{The QKP hierarchy.}\label{sec:qkp}

\subsection{Formal construction of the QKP hierarchy.}
The QKP hierarchy is a real form of the two component KP 
hierarchy (and is referred to elsewhere, for example \cite{Tai98,Tai04}, as
the Davey-Stewartson II hierarchy).  It is constructed by the formal
dressing method, working entirely within a quaternionic framework, so that the comparison between
KP and QKP is quite literally the replacement of $\C$ by $\H$.  I will follow the purely algebraic
approach given in two papers by George Wilson \cite{Wil79,Wil81}. 

We begin by fixing a real differential algebra $\caB$, with derivation $\dy$,
of the form
\[
\caB=\R[u^{(k)}_{\alpha\beta}],\ \text{for}\ \alpha,k=0,1,2,\ldots,\ 
\beta=1,2,3,4.
\]
These generators are algebraically free but related under the derivation by
\[
\dy u^{(k)}_{\alpha\beta} = u^{(k+1)}_{\alpha\beta}.
\]
With this we construct a formal pseudo-differential operator with coefficients
in $\caB\otimes\H$ 
\begin{equation}\label{eq:L_operator}
L = i\dy + U_0 +U_1\dy^{-1} +\cdots
\end{equation}
where
\[
U_0 = j(u_{03}+iu_{04}),\quad 
U_{\alpha} = u_{\alpha 1} + iu_{\alpha 2} + j(u_{\alpha 3} + iu_{\alpha 4}).
\]
It is important for this construction that the leading coefficient $i$ of $L$
is regular for the Lie algebra structure on $\H$ (i.e.,\ the commutator of $i$
has minimal dimension 2). This means we may ignore the component of $U_0$
which commutes with $i$. 

The construction provides an infinite family of
independent derivations on $\caB$, each of which commutes with $\dy$, via Lax equations.
This is done via the formal dressing construction to produce
a subalgebra of the commutative algebra $Z(L)$ of all
pseudo-differential operators (over $\caB\otimes\H$) 
which commute with $L$. The method is summarised in the following two theorems.
\begin{thm}[\cite{Wil79}]\label{thm:dressing}
There exists a formal operator of the form
\[
K=1+\sum_{k\geq 1} a_k(i\dy^{-1})^k
\]
such that $K^{-1}LK=i\dy$. Moreover, $K$ is unique up to right multiplication
by operators of the form $1+\sum_{k\geq 1} c_k\dy^{-k}$ where each $c_k$ is
a complex constant.
\end{thm}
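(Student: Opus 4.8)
The plan is to construct $K$ recursively coefficient-by-coefficient by comparing the two sides of $LK = Ki\dy$ as pseudo-differential operators, and then to analyse the freedom in the construction to obtain the uniqueness statement. First I would write out $LK$ and $Ki\dy$ as formal pseudo-differential series in $\dy^{-1}$. Since $K = 1 + a_1(i\dy^{-1}) + a_2(i\dy^{-1})^2 + \cdots$ with coefficients $a_k \in \caB\otimes\H$, the operator equation $LK = Ki\dy$ becomes, after expanding using the commutation rules $\dy a = a\dy + a'$ and the series for $\dy^{-1}a$, an infinite sequence of equations, one for each power of $\dy$. The top-order terms ($\dy^1$ and $\dy^0$) are automatically satisfied because the leading term of $L$ is $i\dy$ and $U_0 = j(u_{03} + iu_{04})$ anti-commutes appropriately — this is exactly where the regularity of $i$ is used. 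The coefficient of $\dy^{-k}$ then gives an equation of the schematic form $[i, a_{k+1}]\,(\text{const}) = (\text{expression in } a_1,\dots,a_k \text{ and the } U_\alpha \text{ and their }\dy\text{-derivatives})$.

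The key step is to solve this recursion. Because $i$ is regular, the map $a \mapsto [i,a]$ on $\H$ has a $2$-dimensional image (the span of $j$ and $k = ij$) and a $2$-dimensional kernel (the span of $1$ and $i$), so $\mathrm{ad}_i$ is invertible on the complement $\H_i^\perp := j\C$. Thus at each stage the equation determines the $j\C$-component of $a_{k+1}$ uniquely in terms of previously constructed data, while leaving the $\C$-component (the part commuting with $i$) entirely free. Here one must check that the right-hand side of the recursion always lands in the image of $\mathrm{ad}_i$; this should follow by induction from the structure of $L$ (in particular that $U_0$ has no $\C$-component, per the remark that we ignore the part of $U_0$ commuting with $i$), together with the fact that the free $\C$-valued parts of lower $a_j$ contribute only to the kernel direction at each later stage. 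Carrying the induction through, with careful bookkeeping of which $\dy$-derivatives of which generators appear, produces a $K$ of the required form.

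For uniqueness: suppose $K_1, K_2$ both dress $L$ to $i\dy$. Then $M := K_1^{-1}K_2$ satisfies $M(i\dy) = (i\dy)M$, i.e. $[i\dy, M] = 0$, and $M = 1 + \sum_{k\ge 1} c_k \dy^{-k}$ for some $c_k \in \caB\otimes\H$. Writing out $[i\dy, M] = 0$ order by order gives $i c_k' = -[i, c_k]\cdot(\text{const})$ at the top and then relations forcing, inductively, each $c_k$ to be a constant (so $c_k' = 0$) and to commute with $i$, hence $c_k \in \C$. This matches the claimed ambiguity. Conversely any such $M$ obviously preserves the dressing, so the freedom is exactly right multiplication by $1 + \sum c_k \dy^{-k}$ with $c_k$ complex constants.

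The main obstacle I anticipate is the inductive verification that the right-hand side of the recursion always lies in the image of $\mathrm{ad}_i$ — equivalently, that no obstruction in the $\C$-direction is generated at any stage. This is the heart of the theorem and the place where the hypothesis on the leading coefficient $i$ (regularity, and the convention of discarding the $i$-commuting part of $U_0$) genuinely does the work; the rest is formal manipulation of pseudo-differential series. Since this is Theorem~\ref{thm:dressing}, quoted from Wilson \cite{Wil79}, I would expect the paper to either cite the argument there or indicate that the $\C$-to-$\H$ substitution changes nothing essential, the only point needing comment being that $\mathrm{ad}_i$ on $\H$ behaves just like $\mathrm{ad}$ of a regular element in the relevant complex Lie algebra.
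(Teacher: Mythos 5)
The paper gives no proof of this theorem: it is quoted from Wilson \cite{Wil79}, and your overall strategy --- recursive comparison of coefficients in $LK=Ki\dy$ using the regularity of $i$, followed by analysing $M=K_1^{-1}K_2$ for uniqueness --- is the standard argument there; your uniqueness paragraph is correct as written. However, there is a genuine error in the existence step, exactly at the point you flag as the main obstacle. Writing $\hat a_k=a_ki^k$ and $\hat a_k=\alpha_k+j\beta_k$ with $\alpha_k,\beta_k$ valued in $\C$, the coefficient of $\dy^{-k}$ in $LK=Ki\dy$ reads
\[
[i,\hat a_{k+1}] \;=\; -i\hat a_k' - U_0\hat a_k - \bigl(\text{terms in } U_1,\dots,U_k,\ \hat a_1,\dots,\hat a_{k-1}\bigr).
\]
The left side lies in $j\C$, the image of $[i,\cdot\,]$, but the right side does \emph{not} automatically do so, and it is not supposed to: its $j\C$-component determines $\beta_{k+1}$ algebraically, while its $\C$-component is the equation
\[
i\alpha_k' \;=\; -\bigl(U_0\,j\beta_k\bigr)_{\C} - (\cdots)_{\C},
\]
a first-order formal ODE for the $i$-commuting part of $\hat a_k$. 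So the $\C$-component of $a_k$ is \emph{not} ``entirely free'': it is determined up to a single complex constant of integration. Your framing --- choose $\alpha_k$ arbitrarily and verify that no obstruction appears in the $\C$-direction --- would fail at the next order, because for a generic choice of $\alpha_k$ the $\C$-component of the $\dy^{-k}$ equation is simply false; there is no vanishing to be checked, only an integration to be performed.

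Two consequences show this is more than a bookkeeping slip. First, ``entirely free'' contradicts the uniqueness clause you correctly prove two paragraphs later: if the $\C$-parts were free functions, $K$ would be ambiguous up to right multiplication by $1+\sum c_k(y)\dy^{-k}$ with arbitrary $\C$-valued functions $c_k$, not constants. Second, the need to formally integrate $\alpha_k'=\cdots$ is precisely why the paper remarks, immediately after the theorem, that the components of the $a_k$ do not lie in $\caB$ but generate an extension algebra $\hat\caB$: antiderivatives must be adjoined. With the recursion corrected in this way, the rest of your argument (regularity of $i$, the splitting $\H=\C\oplus j\C$ into kernel and image of $[i,\cdot\,]$, and the uniqueness computation) goes through and reproduces Wilson's proof.
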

Note that the components of the coefficients $a_k$ do not
belong to $\caB$ but generate an extension algebra $\hat\caB$.
\begin{thm}[\cite{Wil79}]
Let $Z_0(L)$ denote the image of the $\R$-algebra homomorphism
\[
\C[\dy]\to Z(L);\ P_0\mapsto P=KP_0K^{-1}.
\]
Then the coefficients of $P$ all lie in $\caB\otimes\H$ and there is  
a derivation $\partial_P$ on $\caB$, characterised by 
\begin{equation}\label{eq:QKP}
\partial_PL = [L,P_+],\quad [\partial_P,\partial_y]=0,
\end{equation}
where $P_+$ denotes the differential operator part of $P$. For any two $P,Q\in Z_0(L)$ the
derivations $\partial_P$ and $\partial_Q$ commute and satisfy
\[
\partial_PQ_+ = \partial_Q P_+ + [Q_+,P_+].
\]
\end{thm}
We single out one of these derivations for special attention: the derivation
$\partial_L$ will be renamed $\partial_x$. 
\begin{rem}
In particular, this includes a family of p.d.e of the form
\begin{equation}\label{eq:DQKP}
\partial_PL_+ = \dx P_+ + [L_+,P_+].
\end{equation}
Historically these are the equations referred to as the hierarchy, 
since these are the equations which occur in practical applications to physics
and fluid dynamics (for example, the $t_2$ flow yields the Davey-Stewartson II equations). 
But this is misleading:
the coefficients of $P_+$ cannot in general be expressed as differential polynomials in the
coefficients of $L_+$ and therefore the equations \eqref{eq:DQKP} alone do
not carry the information contained in the definition \eqref{eq:QKP}.  
Therefore we follow Sato's nomenclature, also used in \cite{SegW}, and call the system
of equations \eqref{eq:QKP} the QKP hierarchy. 
\end{rem}
These derivations also extend to $\hat\caB$ via $\partial_PK =P_-K$ and we can
introduce the formal Baker function. In the purely algebraic setting it is a formal power series
\[
\psi =  (1+a_1\zeta^{-1}+\cdots)\exp(x\zeta+iy\zeta)   = K\exp(x\zeta+iy\zeta)
\]
where $\zeta$ is a formal parameter (to make sense of this we can take
an appropriate extension of the differential algebra $\hat\caB$).
From the definition of $L$ it follows that $L\psi =  -\psi \zeta$.
Therefore 
\[
\dx\psi + L_+\psi = L_-\psi +\psi\zeta -\psi\zeta -L_-\psi =0
\]
Now we can extend the definition of $\psi$ to 
\[
\psi = K\exp[\sum_{k\in\N}(s_k+it_k)\zeta^k]
\]
for real variables $s_k,t_k$. It is not hard to see that for every 
$P_0\in\C[\dy]$ we can find a sequence
$t=(x,y,s_2,t_2,\ldots)$ for which, with the relabelling of $\partial_P$ as
$\partial_t$, we have $\partial_t\psi + P_+\psi = 0$.

\subsection{The Grassmannian class of solutions.}

Following Segal and Wilson \cite{SegW} we can construct an infinite dimensional
Grassmannian $Gr(\H)$ whose points essentially parameterise the set of all
formal Baker functions which actually converge for $|\zeta|=1$. 
It is well-known that for the two component KP hierarchy
this class of solutions ``linearise'' on a
Grassmannian $Gr(\C^2)$ of subspaces of the Hilbert
space $H=L^2(S^1,\C^2)$. 
The QKP hierarchy is a real form of two component
KP obtained by imposing a reality condition: this is
achieved by fixing a left $\H$ action on $H$ for which $j$ acts conjugate 
linearly.

The quickest way to do this is to realise $H$ as $L^2(S^1,\H)$.
We view $\H$ as having two complex structures: the first
arises from left multiplication by $i\in\H$, the second comes from
right multiplication by $i$. These structures are inherited by
any space of $\H$-valued functions. We view $H$ as a complex vector space with
respect to the first complex structure. Then it has
the Hermitian inner product 
\[
<f,g> = \int_{S^1} (f\bar g)_\C
\]
where $\bar g$ is the quaternionic conjugate, and if $q=a+bj\in\H$
for $a,b\in\C$ then $q_\C=a$. The integral is normalised in the usual way so
that $<1,1>=1$. 
To make calculations for differential operators acting on the left, we
represent each element of $H$ in its left Fourier series
\[
f(\zeta) = \sum_{m\in\Z} (u_m+v_mj)\zeta^m,\ |\zeta|=1,\ u_m,v_m\in\C.
\]
Thus we identify $L^2(S^1,\H)$ with $L^2(S^1,\C^2)$, so that
\begin{equation}\label{eq:rep}
L^2(S^1,\C^2)\to L^2(S^1,\H);\ (u,v)\mapsto u+j\bar v,
\end{equation}
where $\bar v(\zeta) = \overline{v(\bar\zeta)}$.
Both left $\H$ multiplication and right complex multiplication
preserve the polarization of $H$ into
orthogonal subspaces $H_+$ and $H_-$ which consist
of functions whose left Fourier series have,
respectively, only non-negative and only negative powers of $\zeta$.

Now define $Gr(\C^2)$ to be the space of all complex 
subspaces $W\subset H$ (i.e., $iW = W$) for which the
projections $\mathrm{pr}_+:W\to H_+$ and $\mathrm{pr}_-:W\to H_-$ are respectively Fredholm (of
index zero) and Hilbert-Schmidt. Then $Gr(\C^2)$ is a complex Hilbert manifold \cite{PreS}.
Inside this we have the real submanifold
\[
Gr(\H) = \{W\in Gr(\C^2); jW=W\},
\]
the fixed point subspace of the real involution $W\mapsto jW$.
As with the KP equations, we
can produce an abelian subgroup whose action on $Gr(\H)$ corresponds to the QKP equations.

Define $\Gamma\subset C^\omega(S^1,\C^*)$ to be the abelian subgroup
of all non-vanishing analytic functions with winding number zero, and let $\caG$ denote its Lie
algebra $C^\omega(S^1,\C)$. $\Gamma$ acts $\H$-linearly on $H$ by
\[
\gamma:H \to H;\ f \mapsto f\gamma,
\]
Thus we have a representation $\Gamma\subset GL_{\res}(\H)$,
hence $\Gamma$ acts on $Gr(\H)$ as a real group: to emphasize the definition
we will write this action as $\gamma\circ W= W\gamma$. The group
$\Gamma$ factorises into the product $\Gamma_-.\Gamma_+$ of two subgroups:
$\Gamma_+$, whose elements extend
holomorphically into the disc $|\zeta|<1$ and are unimodular at $\zeta=0$, and
$\Gamma_-$, whose elements extend holomorphically into the disc $|\zeta|>1$
and take a real positive value at $\zeta=\infty$. (This slightly unusual normalisation
for $\Gamma_-$ and $\Gamma_+$ makes our discussion of the $\Gamma$-orbits easier later on, since
the real scaling action is trivial but the action of the unimodular scaling plays an important role.)
We write $\caG = \caG_++\caG_-$ for the 
corresponding Lie algebra splitting.  We will parameterise elements of $\Gamma_+$ by
writing each in the form
\[
\gamma(\bt) = \exp[it_0+\sum_{k\geq 1} (s_k+it_k)\zeta^k],\ \bt=(s_1,t_1,\ldots)
\in \caG_+.
\]
We will tend to use $x,y$ instead of $s_1,t_1$ below, and write $z=x+iy$.

In the same manner as \cite{SegW}, we can assign to $W\in Gr(\H)$
a convergent Baker function and thereby obtain solutions to the QKP equations.
Such an assignment only works when the $\Gamma_+$-orbit of $W$ meets 
the big cell, i.e., the open dense subset of $Gr(\C^2)$ 
consisting of all $W\in Gr(\C^2)$ for which $\mathrm{pr}_+:W\to H_+$ is invertible. 
The following result shows that, like the KP case, this condition is always
satisfied. 
\begin{thm}\label{thm:bigcell}
Let $W\in Gr(\H)$ and define
\[
\Gamma_1 = \{\exp((x+iy)\zeta);x,y\in\R\}\simeq\R^2.
\]
Then the $\Gamma_1$-orbit of $W$ meets the big cell off a real analytic
(proper) subvariety of $\R^2$. Consequently the $\Gamma_+$-orbit of $W$ meets the big cell on the
complement of a real analytic $($proper$)$ subvariety of $\Gamma_+$.
\end{thm}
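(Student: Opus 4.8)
The plan is to imitate the treatment of the KP Grassmannian in \cite{SegW}, using two observations: $Gr(\H)$ sits inside $Gr(\C^2)$, on which all of the \cite{SegW} machinery is available, and $\Gamma_1=\{\exp((x+iy)\zeta)\}$ is precisely the one-parameter subgroup $\{\exp(z\zeta):z\in\C\}$ which carries the first flow there. The only genuinely new point is that, for QKP, $\Gamma_+$ is a \emph{real} Lie group, so the resulting ``bad set'' will be real-analytic rather than complex-analytic.

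First I would recast the big cell condition: $W'\in Gr(\C^2)$ lies in the big cell iff $pr_+\colon W'\to H_+$ is invertible, and since $pr_+$ has index zero at every point of $Gr(\C^2)$ this is equivalent to $W'\cap H_-=0$. The complement of the big cell is therefore a complex-analytic hypersurface $Z\subset Gr(\C^2)$ (locally the zero set of one holomorphic function).

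Next I would reduce the assertion about $\Gamma_+$ to the one about $\Gamma_1$. The orbit map $\Gamma_+\to Gr(\C^2)$, $\gamma\mapsto W\gamma$, is real-analytic: in the coordinates of the excerpt it is holomorphic in the complex variables $s_k+it_k$ ($k\geq1$) and independent of $t_0$, since the scalar factor $e^{it_0}$ acts trivially because $W$ is a complex subspace. Hence $\{\gamma\in\Gamma_+:W\gamma\notin\text{big cell}\}$ is the pre-image of $Z$, a real-analytic subvariety of $\Gamma_+$, and it is \emph{proper} precisely when the $\Gamma_+$-orbit meets the big cell; as $\Gamma_1\subset\Gamma_+$, this follows from the first assertion. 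For that assertion, note that $z=x+iy\mapsto W\exp(z\zeta)$ is a \emph{holomorphic} map $\C\to Gr(\C^2)$ (the loop $\exp(z\zeta)$ depends holomorphically on $z$), so the bad set in $\R^2\simeq\C$ is the zero locus of an entire function of $z$ — a holomorphic equation for $Z$ pulled back along this curve. It is thus either all of $\C$ or discrete, and discreteness is exactly ``proper real-analytic subvariety of $\R^2$''. So everything comes down to exhibiting one $z_0$ with $(W\exp(z_0\zeta))\cap H_-=0$.

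The main obstacle is this last non-degeneracy statement, which is the analogue, for the first flow, of the non-degeneracy used in \cite{SegW} to show that the $\Gamma_+$-orbit always meets the big cell; I would prove it by the same route. The operators $T_z\colon W\to H_+$, $w\mapsto pr_+(w\,\exp(z\zeta))$, form a holomorphic family of index-zero Fredholm operators with $\ker T_z\simeq (W\exp(z\zeta))\cap H_-$, so by the analytic Fredholm alternative it suffices to rule out that $T_z$ is non-injective for \emph{every} $z$; that this cannot happen uses in an essential way that $W$ is a point of the restricted Grassmannian (in particular that $pr_-|_W$ is Hilbert--Schmidt), exactly as in \cite{SegW}. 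Granting this, the remaining ingredients — the inclusion $Gr(\H)\subset Gr(\C^2)$, the triviality of the $e^{it_0}$-action, and the downgrade from complex- to real-analytic over the real group $\Gamma_+$ — are routine.
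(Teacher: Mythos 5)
Your reduction of the $\Gamma_+$ statement to the $\Gamma_1$ statement is fine, but the $\Gamma_1$ argument rests on a false holomorphy claim. Right multiplication by $e^{z\zeta}$ on $H=L^2(S^1,\H)$ is complex linear for the complex structure (left multiplication by $i$), but under the identification \eqref{eq:rep} with $L^2(S^1,\C^2)$ it acts as $e^{z\zeta}$ on one component and $e^{\bar z\zeta}$ on the other; so $z\mapsto W e^{z\zeta}$ is only \emph{real}-analytic as a curve in $Gr(\C^2)$, not holomorphic. Consequently the pullback of a local defining equation for the complement of the big cell is a real-analytic function of $(x,y)$, not an entire function of $z$, and its zero set need not be discrete --- which is why the theorem only asserts a proper real-analytic subvariety of $\R^2$. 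The paper's own computation makes this visible: the relevant $\tau$-function restricted to $\Gamma_1$ has leading term $c|z|^{2m}$, not $cz^m$. The same confusion undoes your appeal to the ``analytic Fredholm alternative'' (a real-analytic dichotomy ``identically degenerate or degenerate only on a proper subvariety'' survives, but not via holomorphy), and a smaller instance of it is your claim that $e^{it_0}$ acts trivially because $W$ is a complex subspace: $W$ is complex for \emph{left} multiplication by $i$, while $\Gamma$ acts on the \emph{right}, and indeed $\Gamma_0$ acts nontrivially on $Gr_\QKP$ (theorem \ref{thm:periodic}(b)); what is true, and suffices, is that right multiplication by a constant preserves $H_-$ and hence big-cell membership.

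The more serious gap is that the one genuinely nontrivial step --- exhibiting a single $z_0$ with $We^{z_0\zeta}$ in the big cell --- is exactly where the quaternionic structure must be used, and you defer it entirely to \cite{SegW}. Segal--Wilson's non-vanishing result (their Prop.\ 8.6) concerns the holomorphic first flow on $Gr(\C)$; transporting $Gr(\C^2)$ to $Gr(\C)$ by the interleaving isometry $\iota$ turns multiplication by $\zeta$ into the map $(u,v)\mapsto(\zeta v,u)$, which is not the QKP flow, so their statement cannot simply be quoted. The paper's appendix instead expands the $\Gamma_+\times\Gamma_+$ $\tau$-function as $\sum_S w^S\,\tau_{S_0}(\gamma_0)\tau_{S_1}(\gamma_1)$, restricts to the real form $\gamma_1=\bar\gamma_0$, and then uses the reality condition $jW=W$ to show that $w^S\neq 0$ forces $S_0=S_1$; only then do the Segal--Wilson estimates on the individual $\tau_{S_0}$ combine to give a non-vanishing leading term $c|z|^{2m}$. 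Without an argument of this kind (in particular, without using $jW=W$ somewhere) your proposal establishes nothing beyond what holds for an arbitrary point of $Gr(\C^2)$ moved by the non-holomorphic curve $(e^{z\zeta},e^{\bar z\zeta})$, and for such a point one only knows that the full $\Gamma_+\times\Gamma_+$ orbit meets the big cell, not the restricted real orbit.
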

The proof is a variation on the proof of the corresponding result in \cite{SegW} and
will be omitted here since this theorem does not play a central part in the subsequent discussion.
As a consequence, to every $W\in Gr(\H)$ we can assign a convergent Baker 
function $\psi_W$ as
follows. Since $\Gamma_+$-orbits meet the big cell on open sets
there is a function, defined for almost all $\bt\in\caG_+$, 
\begin{equation}\label{eq:Bakerseries}
\psi_W(\bt) = (1 + a_1(\bt)\zeta^{-1} +\cdots)\gamma(\bt),
\end{equation}
taking values in $W$: it is characterised by the property that 
\[
\mathrm{pr}_+(\psi_W(\bt)\gamma(\bt)^{-1}) = 1.
\]
Moreover, $\psi_W$ uniquely determines $W$, since the set $\psi_W(\mathbf{0}),
\psi_W'(\mathbf{0}),\psi_W''(\mathbf{0}),\ldots$ of all $y$ derivatives spans 
an $\H$-subspace of $H$ whose closure is $W$.

From now on we will set $z=x+iy$ and, for notational convenience,
we will use, for example, $\gamma(t_0)$ to denote setting every parameter 
except $t_0$ equal to zero.
\begin{thm}
To every $W\in Gr(\H)$ we can assign a formal pseudo-differential operator
$L_W$ of the form \eqref{eq:L_operator} satisfying $L_W\psi_W=-\psi_W\zeta$.
Consequently, for $t=s_k$ or $t=t_k$, $k\in\N$ there exists a differential operator 
$P_+$ for which
\[
\partial\psi_W/\partial t + P_+\psi_W=0,\quad\text{hence}\ L_t = [P_+,L].
\]
In particular, for each $W$ we obtain a Dirac operator 
$\caD = \partial/\partial\bar z +U_W$
for which $U_W=-(a_1+ia_1i)/2$, $\caD\psi_W=0$ and the equations for
$\partial U_W/\partial t$ are \eqref{eq:DQKP}.
\end{thm}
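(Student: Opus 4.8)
The plan is to reconstruct the entire pseudo-differential operator $L_W$ from the convergent Baker function $\psi_W$ by running the formal dressing theory of \cite{Wil79} (Theorem \ref{thm:dressing}) in reverse. First I would use the expansion \eqref{eq:Bakerseries}, $\psi_W(\bt)=(1+a_1(\bt)\zeta^{-1}+\cdots)\gamma(\bt)$, to define the dressing operator $K_W=1+\sum_{k\geq 1}a_k(i\dy^{-1})^k$, where the $a_k$ are read off from the asymptotic series of $\psi_W$ in the single variable $y$ (setting all other parameters to zero), so that $\psi_W=K_W\exp(z\zeta)$ holds formally with $\dy$ acting as $\partial_y$ on the coefficient functions. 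Since Theorem \ref{thm:bigcell} guarantees $\psi_W$ is defined for $\bt$ off a proper analytic subvariety, the coefficients $a_k$ are genuine (real-)analytic functions of $y$ near a generic point, so the formal manipulations of the algebraic theory are legitimised by convergence. Then set $L_W=K_W(i\dy)K_W^{-1}$; by construction this has the form \eqref{eq:L_operator} with leading term $i\dy$, and $L_W\psi_W=K_W(i\dy)\exp(z\zeta)=-\psi_W\zeta$ since $i\partial_y\exp((x+iy)\zeta)=-\zeta\exp((x+iy)\zeta)$.

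Next I would establish the flow equations. For $P_0\in\C[\dy]$ — in particular $P_0=\zeta^k$ viewed as $(i\dy)^k$ up to the appropriate power of $i$ — dressing gives $P=K_WP_0K_W^{-1}\in Z_0(L_W)$, and \cite{Wil79} tells us its coefficients lie in the coefficient algebra. The key computation is that $\partial\psi_W/\partial t$, where $t=s_k$ or $t_k$ parameterises $\Gamma_+$, equals $-P_+\psi_W$: differentiating $\psi_W=K_W\gamma(\bt)$ and using $\partial\gamma/\partial t=\zeta^k\gamma$ (up to a complex scalar) one gets $\partial\psi_W/\partial t=(\partial K_W/\partial t)\gamma+K_W\zeta^k\gamma$; writing $\zeta^k\gamma$ back in terms of $K_W^{-1}\psi_W$ and using $P_-=P-P_+$, the argument is exactly the formal one shown in the excerpt just before the definition of the extended Baker function, now justified because $\psi_W$ takes values in the fixed subspace $W$ (so both sides lie in $W$) and $pr_+(\psi_W\gamma^{-1})=1$ forces the normalisation that pins down $P_+$. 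From $\partial_t\psi_W+P_+\psi_W=0$ and $L_W\psi_W=-\psi_W\zeta$ one obtains $L_{W,t}=[P_+,L_W]$ by differentiating the second identity and feeding in the first, exactly as in the formal setting.

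Finally, the Dirac operator. The $s_1=x$ flow has $P_+=L_{W,+}=i\dy+U_0$, so $\partial_x\psi_W=-(i\dy+U_0)\psi_W$, and combined with the relation $L_W\psi_W=-\psi_W\zeta$ (whose leading terms after dividing by $\exp(z\zeta)$ read off $U_0$ from $a_1$) this gives $(\partial_x+i\partial_y+U_0)\psi_W=0$, i.e. $\caD\psi_W=0$ with $\caD=\tfrac12(\dx+i\dy+U_0)=\partial/\partial\bar z+U_W$; matching the $\zeta^0$-coefficient of the Baker series identifies $U_W$ with $-(a_1+ia_1i)/2$ (the part of $U_0$ anticommuting with $i$, the commuting part being ignored as noted after Theorem \ref{thm:dressing}). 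That the equations for $\partial U_W/\partial t$ are \eqref{eq:DQKP} is then immediate from $L_{W,t}=[P_+,L_W]$ by taking the differential-operator part and reading off the coefficient of $\dy^0$ in $L_{W,+}$. The main obstacle I anticipate is purely a matter of care rather than depth: ensuring that the formal/algebraic identities of \cite{Wil79} transfer verbatim to the analytic setting — specifically that $K_W$, its inverse, and the dressed operators $P$ all have coefficients that are honest analytic functions on the open set where $\psi_W$ is defined, and that the $t$-derivatives commute with the dressing construction. This is where one must invoke the big-cell density of Theorem \ref{thm:bigcell} and the fact that $\psi_W$ is $W$-valued; once that is in hand the rest is bookkeeping parallel to \cite{SegW}.
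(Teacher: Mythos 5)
Your proposal follows exactly the route the paper takes (which simply defers to Segal--Wilson): extract the coefficients $a_k$ from the Baker expansion, form $K_W=1+\sum a_k(i\dy^{-1})^k$, set $L_W=K_Wi\dy K_W^{-1}$, and deduce the flow equations and the Dirac operator by the standard dressing computations. The details you supply (including the identification $U_W=-(a_1+ia_1i)/2$ as the $i$-anticommuting part of $-a_1$) are correct, so this is essentially the paper's own argument written out in full.
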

The proof is identical to that for the KP hierarchy given in \cite{SegW}. 
In particular, from $\psi_W$ we extract
\[
\tilde\psi_W(\bt) = 1 + \sum_{k>0} a_k(\bt)\zeta^{-k},
\]
from which we obtain $L_W = K_Wi\dy K_W^{-1}$ using
\[
K_W = 1 + \sum_{k>0} a_k(\bt)(i\dy^{-1})^k.
\]
\begin{rem}\label{rem:KP}
Let $Gr(\C)$ to denote the Segal-Wilson Grassmannian for 
$L^2(S^1,\C)$. We can embed this in $Gr(\H)$ using 
$V\mapsto V\oplus \bar V$, where the latter is the
space $\{ (u,\bar v);u,v\in V\}\subset Gr(\C^2)$. Points of $Gr(\H)$ of
this type yield solutions to the (complex scalar) KP hierarchy of equations, since it
is clear that the complex Baker function $\psi_V$ which Segal and
Wilson assign to
$V\in Gr(\C)$ is also our quaternionic Baker function $\psi_W$, for
$W=V\oplus \bar V$.  Therefore all calculations reduce to those of
\cite{SegW}. We will use $Gr_\KP$ to denote the image of $Gr(\C)$ in
$Gr(\H)$ and denote its complement by $Gr_\QKP$. We can characterise the points of $Gr_\KP$ as
follows.
\begin{lem}\label{lem:KP} 
$W\in Gr_\KP$ if and only if $Wi=W$, equally, if and only if the Dirac potential $U_W$ is
trivial. 
\end{lem}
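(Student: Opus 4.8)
The plan is to reduce both of the stated conditions to a single intermediate property of the Baker function $\psi_W=\tilde\psi_W\,\gamma(\bt)$, where $\tilde\psi_W=1+\sum_{k\ge1}a_k\zeta^{-k}$ --- namely that every coefficient $a_k$ is $\C$-valued, equivalently commutes with $i$ --- and then to prove the lemma through the cycle $W\in Gr_\KP\Rightarrow Wi=W\Rightarrow U_W=0\Rightarrow W\in Gr_\KP$. I would work throughout with the orthogonal splitting $H=L^2(S^1,\C)\oplus jL^2(S^1,\C)$, where $L^2(S^1,\C)$ is the space of $\C$-valued functions (characterised by $fi=if$) and orthogonality comes from $(hj)_\C=0$ for complex $h$; it restricts to $H_\pm=H_\pm^\C\oplus jH_\pm^\C$. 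Unwinding \eqref{eq:rep} and Remark~\ref{rem:KP}, $Gr_\KP$ is precisely the set of subspaces $V\oplus jV$ with $V\in Gr(\C)$ a closed complex subspace of $L^2(S^1,\C)$. The only algebra needed is that for $a\in\H$ the element $a+iai$ always lies in the second summand $jL^2(S^1,\C)$ and vanishes exactly when $a$ is $\C$-valued; in particular $U_W=-(a_1+ia_1i)/2$ vanishes iff $a_1$ is $\C$-valued, iff the zeroth order coefficient $U_0=-(a_1+ia_1i)$ of $L_W$ vanishes.

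The key lemma I would isolate is: if all $a_k$ are $\C$-valued, then $W\in Gr_\KP$. Since $\gamma(\bt)$ is $\C$-valued, this hypothesis forces each $y$-derivative $\psi_W^{(n)}(\mathbf{0})$ to lie in $L^2(S^1,\C)$; letting $V\subseteq L^2(S^1,\C)$ be the closure of their complex span, continuity of $f\mapsto jf$ together with the orthogonality above turns the identity $W=\overline{\operatorname{span}_\H\{\psi_W^{(n)}(\mathbf{0}):n\ge0\}}$ (stated after \eqref{eq:Bakerseries}) into $W=V\oplus jV$. Moreover $V\in Gr(\C)$: under $H_\pm=H_\pm^\C\oplus jH_\pm^\C$ the projection $pr_\pm\colon W\to H_\pm$ is the direct sum of $pr_\pm\colon V\to H_\pm^\C$ and its image under the conjugate-linear isometry $f\mapsto jf$, which preserves Fredholmness, Fredholm index and the Hilbert--Schmidt class, so a direct sum being Fredholm of index zero (resp. Hilbert--Schmidt) forces each summand to be so; the $Gr(\H)$-conditions on $W$ thus become exactly the $Gr(\C)$-conditions on $V$, and $W\in Gr_\KP$. (The converse --- $W=V\oplus jV\in Gr_\KP$ implies $\psi_W=\psi_V$ by uniqueness of the Baker function, hence has $\C$-valued coefficients --- holds too but is not used below.)

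Now the cycle. If $W\in Gr_\KP$, write $W=V\oplus jV$ with $V$ consisting of $\C$-valued functions; then $Vi=iV=V$ and $(jV)i=j(Vi)=jV$, so $Wi=W$. If $Wi=W$, consider $\bt\mapsto-i\psi_W(\bt)i$: its values lie in $W$, since $\psi_W(\bt)\in W$ gives $\psi_W(\bt)i\in Wi=W$ and then $-i\psi_W(\bt)i\in iW=W$; and as $\gamma(\bt)$ is $\C$-valued it equals $\bigl(1+\sum_{k\ge1}(-ia_ki)\zeta^{-k}\bigr)\gamma(\bt)$, which satisfies $pr_+\bigl((-i\psi_W(\bt)i)\gamma(\bt)^{-1}\bigr)=1$; by the characterisation of $\psi_W$ it must equal $\psi_W$, so $-ia_ki=a_k$ for all $k$, i.e. every $a_k$ commutes with $i$ and in particular $U_W=0$. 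Finally, if $U_W=0$ then $a_1$ is $\C$-valued and $U_0=0$; writing $\dx\psi_W+(L_W)_+\psi_W=0$ (equivalently $\caD\psi_W=0$) as $(\dx+i\dy)\psi_W=-U_0\psi_W=0$, substituting $\psi_W=\tilde\psi_W\gamma(\bt)$ with $\dx\gamma=\zeta\gamma$, $\dy\gamma=i\zeta\gamma$, and cancelling $\gamma(\bt)$ on the right, one obtains $(\dx\tilde\psi_W+i\dy\tilde\psi_W)+(\tilde\psi_W+i\tilde\psi_W i)\zeta=0$; its coefficient of $\zeta^{-m}$ for $m\ge1$ reads $(\dx a_m+i\dy a_m)+(a_{m+1}+ia_{m+1}i)=0$. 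Since $a_{m+1}+ia_{m+1}i$ always lies in $jL^2(S^1,\C)$ while $\dx a_m+i\dy a_m$ lies in $L^2(S^1,\C)$ whenever $a_m$ does, an induction starting from $a_1$ shows every $a_k$ is $\C$-valued, and the key lemma gives $W\in Gr_\KP$.

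I expect the implication $U_W=0\Rightarrow W\in Gr_\KP$ to be the main obstacle: the other two steps are essentially formal, whereas here one must propagate the single vanishing $U_W=0$ through the entire Baker expansion, which is exactly the role of the recursion extracted from $\caD\psi_W=0$. A secondary technical point is the transfer of the Fredholm and Hilbert--Schmidt conditions (and of the index) from $W$ to $V=W\cap L^2(S^1,\C)$ across $W=V\oplus jV$, which relies on their stability under direct sums of operators and under conjugation by the isometry $f\mapsto jf$.
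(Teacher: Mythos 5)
Your proof is correct and follows essentially the same route as the paper's: the step $Wi=W\Rightarrow$ (all $a_k$ are $\C$-valued) is the paper's application of Baker-function uniqueness to $\frac{1}{2}(\psi_W-i\psi_W i)$, and your coefficient recursion extracted from $\caD\psi_W=0$ is exactly the paper's recursion $q_1=0$, $\partial q_k+q_{k+1}=0$ forcing the $j$-part $\psi_2$ of $\psi_W$ to vanish. The only differences are organisational (you arrange the three conditions as a cycle of implications) together with your more explicit verification that $V\in Gr(\C)$ via the Fredholm index and Hilbert--Schmidt conditions, which the paper leaves implicit.
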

\begin{proof}
If $W\in Gr_\KP$ then clearly we have both $Wi=W$ and $U_W=0$. Now if $Wi=W$ then $W=V+jV$
where 
\[
V = \{\frac{1}{2}(f- ifi);f\in W\}.
\]
In particular, $\frac{1}{2}(\psi_W-i\psi_Wi)$ belongs to $V$ and by uniqueness of the 
Fourier expansion for
Baker functions must equal $\psi_W$, hence $\psi_W$ takes values in $V$. This means 
$V\in Gr(\C)$, since derivatives of $\psi_W$ generate $V$ over $\C$. Thus
$W\in Gr_\KP$. 

Now suppose $U_W=0$ and set $\partial = \partial/\partial z$. Then $\bar\partial\psi_W = 0$ and 
so if we write $\psi_W=\psi_1+j\psi_2$, where both $\psi_1,\psi_2$ commute with $i$,
we have $\bar\partial\psi_1=0$ and $\partial\psi_2=0$. We notice that, restricting $\psi_W$ to $x,y$,
\[
\psi_1 = (1+p_1\zeta^{-1}+\cdots)\exp(z\zeta),\quad \psi_2 = (q_1\zeta^{-1}+\cdots)\exp(z\zeta),
\]
for some complex valued functions $p_1,q_1,\ldots$. Now $\partial\psi_2=0$ means
\[
(q_1+(\partial q_1+q_2)\zeta^{-1}+\cdots+(\partial q_k+q_{k+1})\zeta^{-k}+\cdots)e^{z\zeta}=0.
\]
So $\partial\psi_2=0$ if and only if $q_k=0$ for all $k$, i.e., $\psi_2=0$. Therefore
$\psi_W=\psi_1$ and since $\bar\partial\psi_1=0$ the closure $V$ of the complex subspace of 
$W$ generated
by $\psi_W(0),\partial\psi_W(0),\ldots$ belongs to $Gr(\C)$. But $V$ generates $W$ over $\H$ and
therefore $W=V+jV$, which is $V\oplus\bar V$ in our notation. 
\end{proof}
\end{rem}

\subsection{QKP flows.}
An important consequence of the construction is that the action of $\Gamma_+$
generates the QKP flows, in the following very precise sense. Let $\bt,\bt'\in\caG_+$.
Then, by comparing Fourier series,
we deduce that 
\begin{equation}\label{eq:flow1}
\psi_{W\gamma(\bt')}(\bt) = \psi_W(\bt-\bt')\gamma(\bt'),
\end{equation}
and therefore, treating $\bt$ as variable and $\bt'$ as a constant,
we deduce
\begin{equation}\label{eq:flow2}
L_{W\gamma(\bt')}(\bt) = L_W(\bt-\bt').
\end{equation}
Now, if our interest is purely in the QKP operator $L_W$
then, as with the KP hierarchy \cite{SegW}, we observe that for 
any $\gamma\in\Gamma_-$ we have
$\psi_{W\gamma} = \psi_W\gamma$, hence $L_{W\gamma}=L_W$.
In fact we have the following result.
\begin{thm}\label{thm:phasespace}
The quotient  space $\caM=Gr(\H)/\Gamma_-$ is a manifold, and the
map
\[
\caM \to \{L_W;W\in Gr(\H)\};\quad \Gamma_-\circ W\mapsto L_W
\]
is bijective.
\end{thm}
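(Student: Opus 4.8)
The plan is to follow, \textit{mutatis mutandis}, Segal and Wilson's treatment of the corresponding statement for the KP hierarchy \cite{SegW}, so there are two things to establish: that the $\Gamma_-$-action on $Gr(\H)$ is good enough for the quotient to be a manifold, and that the fibres of $W\mapsto L_W$ are precisely the $\Gamma_-$-orbits. Surjectivity of the map onto $\{L_W:W\in Gr(\H)\}$ is the definition of the target, and well-definedness is already recorded in \eqref{eq:flow1}--\eqref{eq:flow2}, since $\psi_{W\gamma}=\psi_W\gamma$ and hence $L_{W\gamma}=L_W$ for $\gamma\in\Gamma_-$; so the real content is the manifold structure and injectivity.

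For the manifold structure I would first pin down the stabilisers. If $\gamma\in\Gamma_-$ fixes $W$, then $\psi_W\gamma$ is an element of $W$, and dividing by the normalising factor $\gamma_+(\bt)$ one sees that its image under $pr_+$ is the constant $\gamma(\infty)$; since $pr_+$ is injective on the $\Gamma_+$-translate $W\gamma_+(\bt)^{-1}$ for $\bt$ in the big-cell locus (Theorem \ref{thm:bigcell}), this forces $\psi_W\gamma=\gamma(\infty)\psi_W$, hence $\gamma\equiv\gamma(\infty)\in\R_{>0}$. Thus every stabiliser equals the central subgroup $\R_{>0}$ of real constants, which acts trivially (rescaling a subspace by a positive real changes nothing). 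So the $\Gamma_-$-orbits coincide with the orbits of the \emph{free} action of $\Gamma_-/\R_{>0}$, and the manifold structure on $\caM$ then follows exactly as in \cite{SegW} (or \cite{PreS}): one checks the action is proper and builds local slices out of the $\Gamma_+$-translation and the Baker function, the only extra point being that everything in sight is $\H$-linear, so the real locus $jW=W$ is preserved.

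For injectivity, suppose $L_W=L_{W'}$. The dressing operators $K_W,K_{W'}$ both conjugate this operator to $i\dy$, so by the uniqueness clause of Theorem \ref{thm:dressing} we have $K_{W'}=K_WN$ with $N=1+\sum_{k\ge1}c_k\dy^{-k}$ for complex constants $c_k$. Applying both sides to $\gamma_+(\bt)=\exp(\sum_k(s_k+it_k)\zeta^k)$ and using $(i\dy^{-1})^k\gamma_+=\zeta^{-k}\gamma_+$ together with $\dy^{-k}\gamma_+=(i\zeta)^{-k}\gamma_+$, then reordering the (mutually commuting) scalar factors, one finds $\psi_{W'}=\psi_W\,n$, where $n(\zeta)=1+\sum_{k\ge1}c_ki^{-k}\zeta^{-k}$ is complex-valued and independent of $\bt$. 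Since $\psi_{W'}$ determines $W'$ and $n$ acts as a fixed right multiplier, $W'=W\,n$, i.e.\ $W'=n\circ W$. Finally $n$ is holomorphic near $\zeta=\infty$ with $n(\infty)=1$, and the fact that $W\,n=W'$ again lies in $Gr(\H)$ — in particular $pr_+$ is Fredholm of index $0$ — forces the winding number of $n$ on $S^1$ to vanish; by the argument principle $n$ then has no zeros in $|\zeta|\ge1$, so $n\in\Gamma_-$ and $W'$ lies in the $\Gamma_-$-orbit of $W$. This gives injectivity.

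The step I expect to be the main obstacle is not any single computation but keeping the formal and analytic pictures in step: Theorem \ref{thm:dressing} is a statement about formal pseudo-differential operators, whereas $W,W'$ live in the analytic Grassmannian, so one must verify that the transition operator $N$ (equivalently the series $n$) really converges on a neighbourhood of $\{|\zeta|\ge1\}$ and defines a genuine bounded multiplication operator on $H$ — precisely the estimate, resting on the convergence of the Baker function, that Segal and Wilson carry out in the KP case and which transfers here without change. The properness needed for the manifold structure is of the same routine character.
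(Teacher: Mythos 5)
Your proposal is correct and follows essentially the same route as the paper: the manifold structure comes from the triviality of the constant scalars together with the freeness of the $1+O(\zeta^{-1})$ part of $\Gamma_-$ (extrapolated from Segal--Wilson), and injectivity comes from the uniqueness clause of Theorem~\ref{thm:dressing} translated into the statement that $\psi_{W'}=\psi_W n$ for some $n\in\Gamma_-$, so that $W'=Wn$. The paper's proof is just a terser version of the same argument; your extra care over the convergence and non-vanishing of $n$ fills in a point the paper leaves implicit.
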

\begin{proof}
It is easy to extrapolate from the proof of
the analogous result in Segal and Wilson \cite[2.4]{SegW} that the subgroup of elements
of $\Gamma_-$ of the form $1+O(\zeta^{-1})$ acts freely on $Gr(\H)$, while the constant scalars act
trivially, hence $\caM$ is a manifold.
By Theorem \ref{thm:dressing} $L_W$ uniquely determines $\psi_W$ up to
right multiplication by an element of $\Gamma_-$, and $\psi_W$ 
determines $W$, so the map is bijective.
\end{proof}
Since $\Gamma$ is abelian the group $\Gamma_+$ acts on $\caM$ and it 
is clear from \eqref{eq:flow2} that
the orbits correspond to the flows of the QKP hierarchy. 
The action of the circle subgroup $\Gamma_0=\{\exp(it_0);t_0\in\R\}\subset\Gamma_+$ 
turns out to be important. In the first place, the next theorem shows that its orbits 
are either points or circles
depending upon whether or not $W\in Gr_\KP$, and this distinction descends to the
disjoint union $\caM=\caM_\KP\cup\caM_\QKP$ obtained by taking the quotient of $Gr_\KP\cup
Gr_\QKP$ by the action of $\Gamma_-$.

For ease of notation, for any $W\in Gr(\H)$ set $W(t_0,z) = We^{it_0+z\zeta}$,
with the usual convention that the absence of either variable denotes that it
is zero.
\begin{thm}\label{thm:periodic}
\begin{enumerate}
\item For any $W\in Gr(\H)$
\begin{equation}\label{eq:L_W}
L_{W(t_0)} = e^{-it_0}L_We^{it_0}.
\end{equation}
\item The group $\Gamma_0$ acts trivially on $\caM_\KP$, while
the space $\caM_\QKP/\Gamma_0$
of $\Gamma_0$-orbits in $\caM_\QKP$ is the quotient of $\caM_\QKP$ by a free
action of $S^1$.\\
\item Let $\Lambda\subset\C$ be a lattice, then
$L_W(z+\lambda) = \mu(\lambda)^{-1}L_W(z)\mu(\lambda)$ for some
$\mu\in\Hom(\Lambda,S^1)$ if and only if the map
\begin{equation}\label{eq:periodic}
\C\to\caM/\Gamma_0 ;\quad z\mapsto [L_{W(z)}]
\end{equation}
is $\Lambda$-periodic, where $[L_W]$ denotes the $\Gamma_0$-orbit of
$L_W\in\caM$.
\end{enumerate}
\end{thm}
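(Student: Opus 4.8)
The plan is to prove the three parts in the order (a), (b), (c), using (a) as the computational engine for both (b) and (c).

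For part (a), the idea is to exploit the conjugation-equivariance of the whole dressing construction under the $S^1$-action $f\mapsto e^{-it_0}fe^{it_0}$ on $H$ (which is left $\H$-linear and preserves the polarization). Concretely, $W(t_0) = We^{it_0\cdot 1}$ acts on $H$ by right multiplication by the constant $e^{it_0}$, and since constants commute with $\zeta$ this amounts to replacing $\psi_W$ by $e^{-it_0}\psi_We^{it_0}$ — one checks this candidate has the correct normalisation $pr_+(\cdot\,\gamma^{-1})=1$ (because $e^{-it_0}1e^{it_0}=1$) and takes values in $W(t_0)$, hence by uniqueness equals $\psi_{W(t_0)}$. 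Then $\tilde\psi_{W(t_0)} = e^{-it_0}\tilde\psi_We^{it_0}$ (the exponential factor $\exp(z\zeta)$ is unaffected since it commutes with $i$... but careful: $j$ does not commute with $i$, so conjugation genuinely changes $U_0$), and feeding this into $K_W = 1+\sum a_k(i\dy^{-1})^k$ and $L_W = K_Wi\dy K_W^{-1}$ gives \eqref{eq:L_W} directly, since $i\dy$ is fixed by the conjugation. I expect this to be essentially bookkeeping once the equivariance is set up correctly; the one subtlety to watch is that $i\dy^{-1}$ and the coefficients do not commute, so one must verify $e^{-it_0}(i\dy^{-1})e^{it_0} = i\dy^{-1}$ as operators, which holds because $\dy$ acts trivially on constants.

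For part (b): if $W\in Gr_\KP$ then by Lemma \ref{lem:KP} $U_W=0$, and \eqref{eq:L_W} shows $L_{W(t_0)} = e^{-it_0}L_We^{it_0}$; but all coefficients of $L_W$ then commute with $i$ (they are $\C$-valued in the $Gr(\C)$-picture), so the conjugation is trivial and $L_{W(t_0)}=L_W$, i.e. $\Gamma_0$ acts trivially on $\caM_\KP$. Conversely, if $W\in Gr_\QKP$ then $U_W=j(u_{03}+iu_{04})$ is non-zero; I would argue $e^{-it_0}U_We^{it_0} = U_We^{2it_0}$ (since $j$ anti-commutes with $i$) is equal to $U_W$ only for $t_0\in\pi\Z$, and moreover that the $S^1$-action descends to $\caM_\QKP$ freely: if $e^{-it_0}L_We^{it_0}=L_W$ then in particular $U_We^{2it_0}=U_W$, forcing $e^{2it_0}=1$; one then rules out $t_0=\pi$ by examining a higher coefficient (or by the observation that $e^{i\pi}=-1$ acts as a genuine nontrivial conjugation unless $W\in Gr_\KP$, since $-1$ is central — wait, $-1$ \emph{is} central in $\H$, so this needs more care). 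This last point is the one place I expect friction: I would instead note that the action of $\Gamma_0$ on $\caM$ factors through the action on $W$ itself, and the stabiliser of $W\in Gr(\H)$ under $W\mapsto We^{it_0}$ is $\{t_0: We^{it_0}=W\}$; for $W\in Gr_\QKP$ this is trivial because $We^{it_0}=W$ for some $t_0\notin 2\pi\Z$ would force $Wi^k=W$ for the relevant power, eventually giving $Wi=W$ and hence $W\in Gr_\KP$ by Lemma \ref{lem:KP}, a contradiction. So the action is free on $Gr_\QKP$, descends to a free $S^1$-action on $\caM_\QKP$, and $\caM_\QKP/\Gamma_0$ is the corresponding quotient.

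For part (c): by part (a) and \eqref{eq:flow2} (with $\bt' = $ the translation by $\lambda$ in the $z$-variable), $L_{W(z+\lambda)} = L_{W(\lambda)}(z)$ where $W(\lambda) = We^{\lambda\zeta}$; so the map $z\mapsto L_{W(z)}$ is $\Lambda$-quasiperiodic in the precise sense that $L_{W(z+\lambda)}$ and $L_{W(z)}$ differ by whatever relates $W(\lambda)$ to $W$. The condition $L_W(z+\lambda)=\mu(\lambda)^{-1}L_W(z)\mu(\lambda)$ with $\mu(\lambda)\in S^1$ says precisely, via \eqref{eq:L_W} applied with $t_0$ such that $e^{it_0}=\mu(\lambda)$, that $L_{W(z+\lambda)} = L_{W(z)(t_0)}$, i.e. that in $\caM$ the points $[W(z+\lambda)]$ and $[W(z)]$ lie in the same $\Gamma_0$-orbit — which is exactly the statement that \eqref{eq:periodic} is $\Lambda$-periodic. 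The forward direction is immediate from this; for the converse, if the map to $\caM/\Gamma_0$ is $\Lambda$-periodic then for each $\lambda$ there is $t_0(\lambda)\in\R$ with $L_{W(z+\lambda)} = e^{-it_0(\lambda)}L_{W(z)}e^{it_0(\lambda)}$, and setting $\mu(\lambda) = e^{it_0(\lambda)}$ one checks $\mu$ is a homomorphism $\Lambda\to S^1$ by iterating over a sum $\lambda+\lambda'$ and using that the conjugation action is faithful modulo the centre (again appealing to $W\in Gr_\QKP$, the $Gr_\KP$ case being trivial since then any $\mu\equiv 1$ works). The main obstacle across the whole proof is the bookkeeping around the non-centrality versus centrality of elements of $S^1\subset\H$ — keeping straight when $e^{it_0}$ acts trivially by conjugation (only $\pm1$ on $\C$-valued things, but $\pm1$ is central) versus when it acts nontrivially on the $j$-component, and making sure the freeness claim in (b) and the homomorphism property in (c) are not derailed by the central element $-1$.
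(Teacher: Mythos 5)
Your part (a) is exactly the paper's argument (conjugate the Baker function, check it lies in $W(t_0)$ with the right normalisation, invoke uniqueness), and your part (c) forward direction likewise matches. But there is a genuine error in your part (b). The ``instead'' argument you settle on --- that the stabiliser of $W\in Gr_\QKP$ under $W\mapsto We^{it_0}$ is trivial because $We^{it_0}=W$ would force $Wi^k=W$ and eventually $Wi=W$ --- is false. Take $t_0=\pi$: then $e^{it_0}=-1$ and $W(-1)=W$ for \emph{every} linear subspace $W$, yet no power of $-1$ equals $i$, so you cannot conclude $Wi=W$. The $\Gamma_0$-action on $Gr(\H)$, and hence on $\caM_\QKP$, is \emph{never} free: $\pm1$ always stabilises. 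Your own first computation was actually the correct one and should not have been abandoned: $e^{-it_0}U_We^{it_0}=U_We^{2it_0}$, and since $U_W\neq 0$ is a $j$-multiple for $W\in Gr_\QKP$ (Lemma \ref{lem:KP}), conjugation fixes $L_W$ exactly when $e^{2it_0}=1$, i.e.\ the stabiliser is precisely $\{\pm1\}$. There is nothing to ``rule out'' at $t_0=\pi$; rather, the free $S^1$-action asserted in the statement is the action of the quotient group $\Gamma_0/\{\pm1\}\simeq S^1$, which the paper realises concretely by the reparameterisation $e^{it_0}\cdot L_W=L_{W(t_0/2)}$ (using $e^{-i\pi}qe^{i\pi}=q$ for all $q\in\H$). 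The theorem does not claim $\Gamma_0$ acts freely, only that the orbit space $\caM_\QKP/\Gamma_0$ is the quotient by \emph{a} free $S^1$-action, and that action is the half-speed one.

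This error propagates mildly into your converse for (c): you appeal to the conjugation action being ``faithful modulo the centre,'' which is the correct statement (and contradicts your freeness claim in (b)), but it only determines $\mu(\lambda)$ up to sign, so you get a priori a homomorphism $\Lambda\to S^1/\{\pm1\}$ and must still lift it to $S^1$ (possible since $\Lambda$ is free) and check the chosen conjugating element is independent of $z$. The paper sidesteps both issues at once by exhibiting the $G$-orbit of $L_W$ as an $S^1$-bundle over $\C/\Lambda$ with a flat connexion whose horizontal directions are the $\Gamma_1$-orbits; $z\mapsto L_{W(z)}$ is then a parallel section and its monodromy is automatically a homomorphism $\Lambda\to S^1$. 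Your sketch is salvageable, but only after the stabiliser in (b) is correctly identified as $\{\pm1\}$ rather than trivial.
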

\begin{proof} (a) First we observe that
\[
e^{-it_0}\psi_W(z)e^{it_0} = 
(1 + e^{-it_0} a_1(z)e^{it_0} \zeta^{-1} + \cdots)\exp(z\zeta),
\]
and that this takes values in $W(t_0)$. 
By the uniqueness of the Baker function, we deduce that
$\psi_{W(t_0)}(z) = e^{-it_0}\psi_W(z)e^{it_0}$. The formula
\eqref{eq:L_W} follows at once.

\noindent
(b) 
The definition of the $\Gamma_+$ action, combined with \eqref{eq:L_W}, yields
\[
e^{it_0}\circ L_W = L_{W(t_0)}= e^{-it_0}L_We^{it_0}.
\]
This is trivial if $L_W$ is purely complex, which by Lemma \ref{lem:KP} is the case precisely when
$W\in Gr_\KP$, so $\Gamma_0$ acts trivially on $\caM_\KP$.
Now observe that for any $q\in\H$, $e^{-i\pi}qe^{i\pi} = q$.
Therefore if we define an $S^1$ action on $\caM_\QKP$ by 
$e^{it_0}\cdot L_W = L_{W(t_0/2)}$ this action is free, since
the points in the subspace $\caM_\QKP$ are those
for which $\psi_W$ is not purely complex. Clearly the
quotient of $\caM_\QKP$ by this action equals $\caM_\QKP/\Gamma_0$.

\noindent
(c) Suppose $L_W(z+\lambda) = \mu(\lambda)^{-1}L_W(z)\mu(\lambda)$ for some
$\mu\in\Hom(\Lambda,S^1)$ and all $z\in\C$. By \eqref{eq:flow2} and
\eqref{eq:L_W} this means
\[
L_{W(z+\lambda)}(z') = L_{W(z)}(z'-\lambda) = L_{W(z)\mu(-\lambda)}(z')
\]
thinking of $\mu(\lambda)\in\Gamma_0$. But the last has the same
$\Gamma_0$-orbit as $L_{W(z)}$, so $[L_{W(z+\lambda)}]=[L_{W(z)}]$.

Conversely, suppose that \eqref{eq:periodic} is $\Lambda$-periodic. Let 
\[
G = \{\exp(it_0+z\zeta)\in\Gamma\}=\Gamma_0\Gamma_1,
\]
and let $\caS\subset\caM$ be the $G$-orbit of $L_W$. The projection $\caM\to\caM/\Gamma_0$ 
makes $\caS$ a bundle over the $\Gamma_1$-orbit $M\simeq\C/\Lambda$ of $[L_W]$. When 
$L_W\in\caM_\KP$ this
projection is a bijection, so that $L_W(z)$ is $\Lambda$-periodic and $\mu$ is trivial. Otherwise
$\caS\to M$ is an $S^1$-bundle, by (b), with a natural flat connexion for which the action of
$\Gamma_1\subset G$ is horizontal. This makes $z\mapsto L_{W(z)}$ a flat section over the universal
cover $\C$ of $M$ and hence it has a monodromy $\mu'\in\Hom(\Lambda,S^1)$. The action of $S^1$ is via
$\Gamma_0$ and therefore
\[
L_{W(z+\lambda)} = \mu'(\lambda)^{-1}L_{W(z)}\mu'(\lambda),
\]
which implies
\[
L_W(z'-\lambda)  = \mu'(\lambda)^{-1}L_W(z')\mu'(\lambda).
\]
Taking $\mu = (\mu')^{-1}$ gives the required result.
\end{proof}

\subsection{Solutions of finite type.}

By adapting the construction in \cite{SegW} we can construct many points $W\in
Gr(\H)$ corresponding to spectral data and thereby construct Baker functions
(using, for example, Riemann $\theta$-functions). Our spectral data will be a collection
$(X,\rho,P,\zeta,\caL,\varphi)$ of the following.
\begin{enumerate}
\item $X$ is a complete, reduced, algebraic curve of arithmetic genus $g$,
with fixed-point free anti-holomorphic involution $\rho$. $X$ need not be
irreducible, but if it is reducible it must have no more than two irreducible
components and these must be swapped by $\rho$.
\item $P\in X$ is a smooth point 
and $\zeta^{-1}$ is a local parameter about $P$,
\item $\caL$ is a holomorphic line bundle of degree $g+1$ for which
$\overline{\rho^*\caL}\simeq\caL$  and $\caL(-P-\rho P)$ is non-special.
This induces a unique, up to sign, conjugate linear isomorphism 
$\bar\rho^*:\caL\to\caL$ covering $\rho$
and satisfying $(\bar\rho^*)^2=-1$ (so $\bar\rho^*$ is a quaternionic involution).
\item $\varphi$ is a holomorphic
trivialising section of $\caL$ over the disc
$\Delta_P=\{Q;|\zeta(Q)^{-1}|< 1\}$ and its boundary circle $C_P$, both of 
which we assume contain no singular points of $X$.
\end{enumerate}
\begin{rem}\label{rem:torsionfree}
(i) There is no requirement that $X$ be smooth. In general we let $X^\sm$ denote the 
open variety of smooth points on $X$.
It is possible for $X$ to be disconnected but Example
\ref{ex:KPcurves}  shows that this only leads to points in $Gr_\KP$. 
When $X$ is singular the condition (c) is more strict than necessary. 
As explained in \cite[p.38]{SegW}, $\caL$ need only be a 
maximal torsion free coherent sheaf of rank $1$ with $\chi(\caL)=2$. \\
(ii) The QKP spectral curve is the natural quaternionic analogue of the pointed curve which
appears in KP theory. It seems highly likely that $Gr(\H)$ 
plays the same role for the moduli space of such curves that $Gr(\C)$
plays for the moduli space of all pointed complete irreducible algebraic curves.
\end{rem}
From this data we construct a point $W\in Gr(\H)$: the points constructed this way will be called
\emph{of finite type} (this terminology is justified by Theorem \ref{thm:finitetype} below). 
To do this, first identify $S^1$ with the circle $C_P$ (and also with $\rho C_P$ by the map
$\zeta\mapsto \overline{\rho^*\zeta}$). Let $X_0$ denote the closed
non-compact surface $X\setminus (\Delta_P\cup\rho(\Delta_P))$.
We define
\begin{equation}\label{eq:w}
w:H^0(X_0,\caL)\to H;\ \sigma\mapsto (\sigma -j\bar\rho^*\sigma)/\varphi.
\end{equation}
This is clearly $\C$-linear.  Now define $W$ to be
the closure of the image of $w$. A simple generalisation of the Mayer-Vietoris
argument in \cite[\S 6]{SegW}
shows that $W\in Gr(\C^2)$. Observe that since $\bar\rho^*$ is
quaternionic on $H^0(X_0,\caL)$ we have
\[
jw(\sigma) = w(\bar\rho^*\sigma)
\]
and therefore $W\in Gr(\H)$.  The non-speciality condition on
$\caL(-P-\rho P)$ ensures that $W$ lies in the big cell (although this is not
essential we may as well assume this since it happens in the $\Gamma_+$-orbit
by Theorem \ref{thm:bigcell}).

The action of $\Gamma$ on the spectral data is easily
calculated. Clearly
\[
w(\sigma)\gamma^{-1} = (\sigma -j\bar\rho^*\sigma)/(\varphi\gamma).
\]
This twists the trivialisation $\varphi$ into $\varphi\gamma$, which 
we interpret as trivialisation for the line bundle $\caL\otimes \ell(\gamma)$. Here 
$\ell(\gamma)$ is a degree zero line bundle obtained by glueing the 
trivial bundles over $X_0$ and $\mathrm{clos}(\Delta_P)$ together using $\gamma$ as a 
transition function, and glueing that to the trivial bundle over 
$\mathrm{clos}(\rho(\Delta_P))$ using $\bar\gamma$ (after identifying $\zeta$
with $\overline{\rho^*\zeta}$). Since we will need to work with these trivialisations
later on we will be more precise about this by introducing better notation (and then
suppress this in most of what follows until we require it).

The construction of $\ell(\gamma)$ equips it with trivialising sections $\tau_P(\gamma)$ and
$\tau_0(\gamma)$, over $\Delta_P\cup\rho\Delta_P$ and $X\setminus\{P,\rho P\}$ respectively,
satisfying the transition relation
\begin{equation}\label{eq:transition}
\tau_P(\gamma)= \gamma\tau_0(\gamma)\ \text{over}\ \Delta_P\setminus\{P\},
\end{equation}
and the $\rho^*$-conjugate of this about $\rho P$. So by ``$\varphi\gamma$'' we really mean the
trivialisation $\varphi\tau_P(\gamma)$. Using this we obtain
\[
H^0(X_0,\caL\otimes\ell(\gamma))\to C^\omega(S^1,\C);\quad \sigma\mapsto
\sigma/\varphi\tau_P(\gamma).
\]
Now we observe that
\begin{equation}\label{eq:varphigamma}
\frac{\sigma}{\varphi\tau_P(\gamma)} = \frac{\sigma\tau_0(\gamma)^{-1}}{\varphi}\gamma^{-1},
\end{equation}
and $\sigma\tau_0(\gamma)^{-1}\in H^0(X_0,\caL)$, hence $W\gamma^{-1}$ corresponds to replacing
$\caL$ by $\caL\otimes\ell(\gamma)$ and $\varphi$ by $\varphi\tau_P(\gamma)$. 
We have therefore proved the following lemma.
\begin{lem}\label{lem:twist}
If $W$ corresponds to the spectral data $(X,\rho,P,\zeta,\caL,\varphi)$ and
$\gamma\in \Gamma$ then
$W\gamma^{-1}$ corresponds to the same data with $\caL$
and $\varphi$ replaced by $\caL\otimes \ell(\gamma)$ and $\varphi\gamma$.
\end{lem}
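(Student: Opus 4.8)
The plan is to prove Lemma~\ref{lem:twist} by showing directly that the point of $Gr(\H)$ built from the twisted data $(X,\rho,P,\zeta,\caL\otimes\ell(\gamma),\varphi\tau_P(\gamma))$ coincides with $W\gamma^{-1}$, exhibiting an explicit $\C$-linear isomorphism of section spaces that intertwines the two copies of the map~\eqref{eq:w}. Write $w_\gamma$ for the map $\sigma'\mapsto(\sigma'-j\bar\rho^*\sigma')/(\varphi\tau_P(\gamma))$ defined on $H^0(X_0,\caL\otimes\ell(\gamma))$ and let $W'$ be the closure of its image; this is the candidate point, with $\varphi\tau_P(\gamma)$ being what ``$\varphi\gamma$'' abbreviates. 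Recall that $W\gamma^{-1}$ is the closure of the image of $\sigma\mapsto w(\sigma)\gamma^{-1}=(\sigma-j\bar\rho^*\sigma)/(\varphi\gamma)$ on $H^0(X_0,\caL)$. The goal is $W'=W\gamma^{-1}$.

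First I would observe that $\ell(\gamma)$ is trivialised over $X\setminus\{P,\rho P\}$, hence over $X_0$, by $\tau_0(\gamma)$, so multiplication by $\tau_0(\gamma)^{-1}$ is a $\C$-linear isomorphism $H^0(X_0,\caL\otimes\ell(\gamma))\to H^0(X_0,\caL)$, $\sigma'\mapsto\sigma:=\sigma'\tau_0(\gamma)^{-1}$. Next, the transition relation~\eqref{eq:transition} together with its $\rho^*$-conjugate about $\rho P$ gives, exactly as in~\eqref{eq:varphigamma}, the identity $\sigma'/(\varphi\tau_P(\gamma))=(\sigma/\varphi)\gamma^{-1}$ on $C_P$, and the conjugate identity near $\rho P$. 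Applying this to both summands of $w_\gamma(\sigma')$ then yields $w_\gamma(\sigma')=w(\sigma)\gamma^{-1}$, \emph{provided} $\bar\rho^*\sigma'$ corresponds to $\bar\rho^*\sigma$ under the same isomorphism; since $\sigma\mapsto\sigma'$ is a bijection, the images agree and taking closures gives $W'=W\gamma^{-1}$. Finally I would record that the twisted data is again admissible: $\deg\ell(\gamma)=0$ keeps the bundle degree at $g+1$; the $\bar\gamma$-glueing about $\rho P$ built into the construction of $\ell(\gamma)$ gives $\overline{\rho^*\ell(\gamma)}\simeq\ell(\gamma)$, hence $\overline{\rho^*(\caL\otimes\ell(\gamma))}\simeq\caL\otimes\ell(\gamma)$; and $\varphi\tau_P(\gamma)$ is a trivialisation over $\Delta_P\cup C_P$. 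Non-speciality of $(\caL\otimes\ell(\gamma))(-P-\rho P)$ may be lost, but by remark~\ref{rem:torsionfree} this is inessential.

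The one point that genuinely needs care, and which I expect to be the main obstacle, is the compatibility of the quaternionic involutions: one must verify that the conjugate-linear $\bar\rho^*$ the construction places on $\caL\otimes\ell(\gamma)$ is precisely the one with $(\bar\rho^*)^2=-1$, and that under $\sigma'\mapsto\sigma'\tau_0(\gamma)^{-1}$ it is intertwined with $\bar\rho^*$ on $\caL$, so that the factor of $j$ in $w_\gamma$ matches the one in $w(\cdot)\gamma^{-1}$ with no spurious unit inserted. This comes down to choosing $\tau_0(\gamma)$ to be $\rho$-equivariant, which is exactly what the $\gamma/\bar\gamma$ symmetry in the construction of $\ell(\gamma)$ supplies; once that is in place, the rest is the substitution already carried out in~\eqref{eq:varphigamma}.
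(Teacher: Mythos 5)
Your argument is correct and is essentially the paper's own proof: the identity $w(\sigma)\gamma^{-1}=(\sigma-j\bar\rho^*\sigma)/(\varphi\gamma)$ together with the substitution \eqref{eq:varphigamma} via $\sigma'\mapsto\sigma'\tau_0(\gamma)^{-1}$ is exactly how the lemma is established in the text. Your additional checks (that $\bar\rho^*$ is intertwined by the $\gamma/\bar\gamma$ symmetry of $\ell(\gamma)$, and that the twisted data remains admissible apart from possible loss of non-speciality, which the paper itself flags as inessential) are sensible but do not change the route.
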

Notice that when
$\gamma\in\Gamma_-$ the two line bundles are isomorphic and only the trivialisation changes.

A consequence of this construction is the epimorphism of real groups
$\ell:\Gamma\to J_\R(X)$, where the target here is the connected
component of the identity of the real subgroup
\[
\{L\in \Jac(X);\overline{\rho^*L}\simeq L\}
\]
of the Jacobi variety of $X$. The restriction of $\ell$ to
$\Gamma_+$ is still onto.

We can also characterise the Baker function in terms of the spectral data.
Since $\caL(-P-\rho P)\otimes\ell(\gamma)$ is almost always non-special (by Theorem 
\ref{thm:bigcell}) for almost all $\gamma\in\Gamma$ there is a unique $\sigma_\gamma\in
H^0(X,\caL(-\rho P)\otimes\ell(\gamma))$ for which $\sigma_\gamma/(\varphi\gamma)$ has
value $1$ at $P$: then
$\sigma_\gamma/(\bar\rho^*\varphi\bar\gamma)$ is locally holomorphic about $\rho P$
with a simple zero there. It follows from these properties that 
$\psi_W(\gamma)=w(\sigma_\gamma)$. 

\begin{rem}\label{rem:global}
This construction is at its most concrete in the case where the trivialising 
section $\varphi$ extends to a globally holomorphic section on the whole 
of $X$. Such a choice is always possible for our spectral data (for example, using a non-zero
section vanishing at $\rho P$) and we can choose $\Delta_P$ so that it contains no zeroes of $\varphi$.
Then $\psi_1 = \sigma_\gamma/\varphi$ and $\psi_2=\bar\rho^*\sigma_\gamma/\varphi$
are both meromorphic functions on $X\setminus\{P,\rho P\}$. 
It follows that $\psi_W$ extends to $X\setminus\{P,\rho P\}$ and we have
\begin{equation}\label{eq:global}
\psi_W:\Gamma_+\times X\setminus\{P,\rho P\}\mapsto \H;\ \psi_W = \psi_1-j\psi_2.
\end{equation}
Notice that these conditions on $\varphi$ do not uniquely specify it. For if $D$ denotes its divisor 
of zeroes we are free to multiply $\varphi$ by a rational function on $X$ which does not vanish at
$P$ and whose divisor of poles lies
in the linear system $|D|$.  Thus we cannot talk of a unique global Baker 
function, but any two differ by such a rational function. Nevertheless, in what follows
we will use the phrase ``the global Baker function'' to refer to any one of these.
\end{rem}
\begin{rem}\label{rem:change_of_triv}
Every other trivialisation $\varphi'$ of $\caL$ over $\Delta_P$ is of the form $s\varphi$,
where $s$ is a non-vanishing holomorphic function on $\Delta_P$. This 
can be factorised into $s=\gamma'e^{it_0}$ where $\gamma'\in\Gamma_-$.
By previous observations we conclude that this change of trivialisation has
the effect
\[
\psi_W\mapsto e^{-it_0}\psi_We^{it_0}\gamma'.
\]
It follows that the quotient map $Gr(\H)\to\caM$ amounts to discarding from the spectral data
almost all the information
given by the choice of trivialisation: the remainder being the identification it gives between
fibres of $\caL$ over $P$ and $\rho P$. The further quotient  $\caM\to\caM/\Gamma_0$ 
discards even this information. In particular, when dealing with QKP solutions we may as well work
with global Baker functions.
\end{rem}
\begin{exam}\label{ex:KPcurves}
Consider taking $X=Y\cup\bar Y$, a disjoint union of
two copies of the same irreducible complete algebraic curve, with $\bar Y$
equipped with the opposite complex structure to $Y$. Then
$\rho:(P_1,P_2)\to (P_2,P_1)$ is a
fixed point free anti-holomorphic involution. It is not hard to see that if
we equip $Y$ with the spectral data described in \cite{SegW}, i.e., fix
$P\in Y$ with local parameter $\zeta^{-1}$, a line bundle $\caL_Y$ over $Y$ of
degree equal to the genus of $Y$ and a local trivialisation $\varphi$ about
$P$, then
we automatically obtain our quaternionic spectral data, with $\caL$ given by
$\caL|Y = \caL_Y$ and $\caL|\bar Y = \rho^*\caL_Y$. It follows that the point
$W\in Gr(\H)$ we obtain has the form $W=V\oplus \bar V$, where $V$ is the
point in the Grassmannian $Gr(\C)$ built from the data on $Y$.
Therefore, by Remark \ref{rem:KP}, spectral curves of this type only result 
in solutions to the KP hierarchy.
\end{exam}

Now let us consider the $\Gamma_+$-orbits in $\caM_\QKP$ and
$\caM_\QKP/\Gamma_0$.
Let $X_\fp$ be the singular curve obtained by identifying
the two points $P$ and $\rho(P)$ together to form an ordinary double point
and define $J_\R(X_\fp)$ to be the connected component of the identity of the
real group
\[
\{L'\in \Jac(X_\fp);\overline{\rho^*L'}\simeq L'\}.
\]
\begin{lem}
$J_\R(X_\fp)$ is an $S^1$-bundle over $J_\R(X)$.
\end{lem}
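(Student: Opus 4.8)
The plan is to deduce the lemma from the description of the generalised Jacobian of the nodal curve $X_\fp$. Since we are interested in points of $\caM_\QKP$, the curve $X$ is connected (by Example~\ref{ex:KPcurves} a disconnected $X$ yields only points of $Gr_\KP$), and the normalisation $\nu:X\to X_\fp$ is precisely the map that re-separates the node of $X_\fp$ into the pair $P,\rho P$. The standard exact sequence for the Picard group of a curve with one ordinary double point (coming from $0\to\caO_{X_\fp}^\times\to\nu_*\caO_X^\times\to\Ct\to 0$, the resulting map $H^0(X,\caO_X^\times)\to\Ct$ being trivial because $X$ is connected) gives, on degree-zero parts, a short exact sequence of connected complex algebraic groups
\[
1\longrightarrow\Ct\longrightarrow\Jac(X_\fp)\xrightarrow{\ \nu^*\ }\Jac(X)\longrightarrow 1,
\]
in which the central $\Ct$ records the identification of the fibre of a line bundle over $P$ with its fibre over $\rho P$. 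Because $\rho$ preserves the node of $X_\fp$ (it swaps its two branches) it descends to an anti-holomorphic involution of $X_\fp$, and the whole sequence is equivariant for the two anti-holomorphic involutions $L'\mapsto\overline{\rho^*L'}$.

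The first substantive step is to identify the induced anti-holomorphic involution $\sigma$ on the central $\Ct$. Represent a point of $\ker\nu^*$ as the trivial bundle on $X$ equipped with a gluing parameter $c\in\Ct$, so that its sections are the functions $s$ on $X$ with $s(\rho P)=c\,s(P)$. Pulling back by $\rho$ interchanges the roles of $P$ and $\rho P$, which replaces $c$ by $c^{-1}$, and the subsequent conjugation turns this into $\sigma(c)=\bar c^{-1}$. Hence the fixed locus of $\sigma$ on $\Ct$ is the unit circle $S^1=\{\,c:|c|=1\,\}$; being connected, it lies in $J_\R(X_\fp)$.

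Next I would pass to identity components of the fixed-point sets. The map $\nu^*$ sends $J_\R(X_\fp)$ into $J_\R(X)$, and its kernel there is $J_\R(X_\fp)\cap\Ct$, which equals $(\Ct)^\sigma=S^1$ by the previous step. For surjectivity, note that passing from $X$ to $X_\fp$ raises the arithmetic genus by one, so $\dim_\R J_\R(X_\fp)=\dim_\C\Jac(X_\fp)=\dim_\C\Jac(X)+1=\dim_\R J_\R(X)+1$; since the kernel $S^1$ is one-dimensional, the image of $\nu^*$ is a subgroup of full dimension in the connected group $J_\R(X)$, hence all of it. A surjective homomorphism of Lie groups with kernel $S^1$ is a principal $S^1$-bundle, which is the assertion of the lemma.

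The step I expect to require the most care is the identification of $\sigma$ on $\Ct$: one must check that $\overline{\rho^*}$ acts on the gluing parameter by $c\mapsto\bar c^{-1}$ and not, say, $c\mapsto\bar c$, since this is exactly what forces the real fibre to be the \emph{compact} group $S^1$ rather than $\R^\times$. A secondary point to keep in mind is that on the \emph{full} fixed-point sets $\nu^*$ need not be surjective---there is a $\Z/2$ obstruction living in $H^1(\langle\sigma\rangle,\Ct)$---which is precisely why one must restrict to identity components, as in the dimension count above.
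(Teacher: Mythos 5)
Your proof is correct, and at its core it is the same argument as the paper's: the kernel of $\Jac(X_\fp)\to\Jac(X)$ is the $\Ct$ of identifications of the fibres over $P$ and $\rho P$, and the reality condition $\overline{\rho^*L'}\simeq L'$ cuts this $\Ct$ down to the unimodular identifications. The paper phrases this fibrewise --- a point of $\Jac(X_\fp)$ is a pair $(L,\phi)$ with $\phi:L_P\to L_{\rho P}$, and once an isomorphism $\overline{\rho^*L}\simeq L$ is fixed the compatible $\phi$ form a free $S^1$-orbit --- whereas you set up the exact sequence $1\to\Ct\to\Jac(X_\fp)\to\Jac(X)\to 1$ and compute that the induced anti-holomorphic involution on the kernel is $c\mapsto\bar c^{-1}$, which is precisely the computation that forces the real fibre to be compact. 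Your version adds two things the paper leaves implicit: an explicit surjectivity argument onto $J_\R(X)$ (the dimension count on identity components), and the observation that connectedness of $X$ is needed (otherwise the global units already surject onto $\Ct$ and the kernel collapses), which is consistent with the lemma being invoked only for $\caM_\QKP$. The one cosmetic slip is calling $\nu$ ``the normalisation'': since $X$ itself may be singular, $\nu$ is only the partial normalisation separating the node $P\sim\rho P$; the exact sequence and the rest of the argument are unaffected.
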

\begin{proof}
Think of $L'$ as $L\in \Jac(X)$ equipped with an isomorphism
between the fibres $L_P$ and $L_{\rho(P)}$. An isomorphism
between $\overline{\rho^*L'}$ and $L'$ is a conjugate linear
isomorphism of $\rho^*L$ with $L$ which identifies the fibre isomorphisms.
Therefore, given a fixed isomorphism
$\overline{\rho^*L}\simeq L$ we can only vary the fibre
identification by a unimodular scaling, hence the fibres of $J_\R(X_\fp)$ are
free $S^1$-orbits.
\end{proof}
As a corollary to Remark \ref{rem:change_of_triv} and the previous lemma we see how the
real groups $J_\R(X_\fp)$ and $J_\R(X)$ sit geometrically with respect to the QKP phase space
$\caM_\QKP$.
The equivalent theorem for the KP hierarchy is well-known and can be deduced
from \cite{SegW}.
\begin{thm}\label{thm:finitetype}
Suppose $W\in Gr_\QKP$ is of finite type, then
the $\Gamma_+$-orbit of $L_W\in\caM_\QKP$ can
be identified with the real group $J_\R(X_\fp)$, while the $\Gamma_+$-orbit of
$[L_W]\in\caM_\QKP/\Gamma_0$ can be identified with $J_\R(X)$. Moreover, 
$L_W$ is of finite type if and only if it admits only 
finitely many independent non-stationary QKP flows.
\end{thm}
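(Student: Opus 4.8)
The plan is to carry the entire statement through the spectral-data dictionary of this subsection. Fix a finite-type $W$ with data $(X,\rho,P,\zeta,\caL,\varphi)$, so that by Lemma~\ref{lem:twist} the $\Gamma$-action on $W$ is, on spectral data, just $\caL\mapsto\caL\otimes\ell(\gamma)$, $\varphi\mapsto\varphi\tau_P(\gamma)$. First I would refine the epimorphism $\ell\colon\Gamma_+\to J_\R(X)$ to a homomorphism $\ell_\fp\colon\Gamma_+\to J_\R(X_\fp)$ by recording, together with the class of $\ell(\gamma)$ in $\Jac(X)$, the identification of its fibres over $P$ and $\rho P$ induced by $\tau_P(\gamma)$ and its conjugate about $\rho P$. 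Two things need checking, both routine: that $\ell_\fp$ is onto --- surjectivity onto $J_\R(X)$ is known, and the $S^1$-fibre of $J_\R(X_\fp)\to J_\R(X)$ (the lemma just above) is swept out by $\Gamma_0=\{e^{it_0}\}\subset\Gamma_+$, since $e^{it_0}$ rescales exactly that fibre identification; and that $\ell_\fp(\Gamma_-)=0$, because an element of $\Gamma_-$ extends holomorphically over $\Delta_P\cup\rho\Delta_P$ and is normalised to a real positive value at $P$, so it glues to the trivial bundle with the trivial fibre identification.

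Next I would identify the orbits. By Theorem~\ref{thm:phasespace} the point $L_{W\gamma^{-1}}\in\caM$ depends on $\gamma$ only through the $\Gamma_-$-coset of $W\gamma^{-1}$; by Remark~\ref{rem:change_of_triv} a $\Gamma_-$-twist of the data changes only the trivialisation $\varphi$ while leaving intact the fibre identification of $\caL$ over $\{P,\rho P\}$, and passing to $\caM/\Gamma_0$ then discards that identification as well. Hence $L_{W\gamma^{-1}}$ depends on $\gamma$ only through $\ell_\fp(\gamma)\in J_\R(X_\fp)$, and $[L_{W\gamma^{-1}}]$ only through $\ell(\gamma)\in J_\R(X)$; since $\ell_\fp$ and $\ell$ are onto, the $\Gamma_+$-orbit of $L_W$ maps onto $J_\R(X_\fp)$ and that of $[L_W]$ onto $J_\R(X)$. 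For injectivity I would use the injectivity half of the spectral-data dictionary (the quaternionic form of the Mayer--Vietoris construction of \cite{SegW}): if $L_{W\gamma^{-1}}=L_{W\gamma'^{-1}}$ then the two sets of data are related by a $\Gamma_-$-twist, which with Lemma~\ref{lem:twist} forces $\ell(\gamma)=\ell(\gamma')$ with matching fibre identifications, i.e. $\ell_\fp(\gamma)=\ell_\fp(\gamma')$; the $\caM/\Gamma_0$ statement is the same argument allowing an extra $\Gamma_0$-factor. This gives the two claimed identifications.

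For the characterisation by flows: the QKP flows on $\caM$ are precisely the $\Gamma_+$-action, so the number of independent non-stationary flows at $L_W$ is the dimension of its $\Gamma_+$-orbit, which for finite-type $W$ is $\dim J_\R(X_\fp)$ and hence finite. Conversely, suppose $L_W$ has only finitely many independent non-stationary QKP flows, so its $\Gamma_+$-orbit is finite dimensional, equivalently the stabiliser of $W$ in $\Gamma$ has finite codimension. I would then run the Segal--Wilson reconstruction inside $Gr(\H)$: the associated commutative ring of ``functions'' cuts out a complete reduced curve $X$ with a marked smooth point $P$ and local parameter $\zeta^{-1}$, and the sheaf $\caL$ is recovered from $W$ by the usual duality; the hypothesis $jW=W$ makes all of this $\rho$-equivariant, so $X$ acquires an anti-holomorphic involution $\rho$, while $W\in Gr_\QKP$, i.e. $U_W\neq0$ by Lemma~\ref{lem:KP}, forces $\rho$ to be fixed-point free and $X$ to have at most two components swapped by $\rho$. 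This exhibits $W$ as of finite type.

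The hard part is precisely this converse: one must carry the Segal--Wilson ``curve of a commutative ring'' construction faithfully into the quaternionic Grassmannian and verify that the reality structure emerges correctly --- that the reconstructed $X$ genuinely admits the fixed-point-free $\rho$, and that the possibly reducible or singular curves allowed in our spectral data are all produced --- and one must check that finite codimension of the stabiliser really forces a \emph{finitely generated} ring, so that $X$ is a genuine algebraic curve rather than something of ``infinite genus''. Everything else is bookkeeping once $\ell_\fp$ is in place.
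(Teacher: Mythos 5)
Your proposal is correct and follows essentially the route the paper itself indicates: the two orbit identifications come from the action of $\Gamma$ on spectral data (lemma \ref{lem:twist}, remark \ref{rem:change_of_triv}, and the $S^1$-bundle structure of $J_\R(X_\fp)$ over $J_\R(X)$), while the characterisation by flows is exactly the argument via the commutative ring $\caA_W$ and the Segal--Wilson reconstruction that the paper defers to \S\ref{subsec:QKPcurve}, including the point you flag as hard (that a finite-codimension stabiliser yields a ring with elements of every sufficiently high order, hence a genuine algebraic curve with $(W^{\alg})^{\C}$ locally rank one). One small misattribution in your last paragraph: fixed-point-freeness of $\rho$ in the reconstruction is forced by the quaternionic structure $jW=W$ with $j^2=-1$ (a conjugate-linear map squaring to $-1$ cannot act on the one-dimensional fibre over a fixed point), whereas the condition $U_W\neq 0$ instead serves to exclude the disconnected curves of example \ref{ex:KPcurves}.
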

The last claim in this theorem is explained in \S\ref{subsec:QKPcurve} where we briefly discuss the
reconstruction of the spectral curve from the QKP operator $L_W$ via its ring of commuting
differential operators.
\begin{rem}
It is  natural to ask whether there are any $W\in Gr_\QKP$ whose
$\Gamma_+$-orbit lies entirely in the big cell. When $X$ 
is smooth and has genus $g\leq 2$ there are always QKP solutions which are
globally defined.  To see this, we need to find $\caL$ of degree $g+1$ for which 
$\caL(-P-\rho P)\otimes L$ is non-special for all $L\in J_\R(X)$. This is trivial for $g=0$. More
generally, this $J_\R(X)$-orbit lies in the real slice $\Pic_{g-1}(X)^\rho$
of $\Pic_{g-1}(X)$. For $g=1$ the orbit must avoid the unique special line
bundle, namely the trivial bundle. Since $\Pic_{g-1}(X)^\rho$ has two connected
components for $g=1$ we can take any $\caL(-P-\rho P)$ to be any point lying
on the component not containing the trivial bundle. For $g=2$ the special line
bundles of degree $g-1$ lie in the image of the map $X\to \Pic_{g-1}(X)$ which
sends $Q$ to $\caO_X(Q)$. This image cannot intersect the real slice
$\Pic_{g-1}(X)^\rho$, for if $\caO_X(Q)\simeq \caO_X(\rho Q)$ then $Q=\rho Q$,
but $\rho$ has no fixed points. Hence for $g=2$ any choice of real line bundle
$\caL$ of degree $3$ provides global solutions to the QKP hierarchy.
\end{rem}
We finish this section by discussing the $\Gamma_1$-orbits. 
Let us identify $\Gamma_1$ with $\C$ by $e^{z\zeta}\mapsto z$. We wish to characterise
the homomorphism $\ell:\C\to J_\R(X)$. This is 
straightforward: $\ell(z)$ is obtained from the trivial bundle by twisting it by
$e^{z\zeta}$ about $P$ and $\overline{\rho^*e^{z\zeta}}$ about $\rho(P)$. It
follows that $\ell$ is uniquely determined by the property that 
\begin{equation}\label{eq:ell}
\frac{\partial \ell}{\partial z}|_{z=0} = 
\frac{\partial\caA_P}{\partial\zeta^{-1}}|_{\zeta^{-1}=0}
\end{equation}
where $\caA_P:X^\sm\to \Jac(X)$ is the Abel map with base point $P$ and we interpret this equation by
identifying $T_eJ_\R(X)^\C$ with $T_e\Jac(X)\simeq T^{1,0}_e\Jac(X)$. 
As a corollary to this and Theorem \ref{thm:periodic} we deduce the following.
\begin{thm}\label{thm:ell-periodic}
Suppose $W\in Gr_\QKP$ arises from spectral data and its 
$\Gamma_1$-orbit lies entirely
in the big cell. Then there is a lattice $\Lambda\subset\C$ for
which $L_W(z+\lambda) = \mu(\lambda)^{-1}L_W(z)\mu(\lambda)$ for
some $\mu\in\Hom(\Lambda,S^1)$ if and only if $\ell(z)$ is $\Lambda$-periodic.
\end{thm}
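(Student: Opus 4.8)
The plan is to reduce the statement, via Theorem~\ref{thm:periodic}(c), to a comparison of $\Lambda$-periodicity for the two maps $z\mapsto[L_{W(z)}]$ and $z\mapsto\ell(z)$, and then to identify these maps using Theorem~\ref{thm:finitetype}. Since $W$ arises from spectral data it is of finite type, so by Theorem~\ref{thm:finitetype} the $\Gamma_+$-orbit of $[L_W]$ in $\caM_\QKP/\Gamma_0$ is identified, as a set, with $J_\R(X)$; the content I would make explicit is that under this identification the orbit map $\gamma\mapsto[L_{W\gamma}]$ becomes the homomorphism $\ell\colon\Gamma_+\to J_\R(X)$. This follows by combining Lemma~\ref{lem:twist}, which says that $W\gamma$ carries the line bundle $\caL\otimes\ell(\gamma)^{-1}$, with Remark~\ref{rem:change_of_triv}, which says that passing to $\caM/\Gamma_0$ retains only the isomorphism class of that bundle. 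Hence there is an \emph{injection} $\Phi\colon J_\R(X)\hookrightarrow\caM_\QKP/\Gamma_0$ with $[L_{W\gamma}]=\Phi(\ell(\gamma))$ for all $\gamma\in\Gamma_+$, the inversion on $\ell(\gamma)$ being immaterial for periodicity questions.

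Next I would invoke the hypothesis that the $\Gamma_1$-orbit of $W$ lies entirely in the big cell: this guarantees that $z\mapsto L_{W(z)}$ is a genuine (non-singular) map on all of $\C$, so that $z\mapsto[L_{W(z)}]$ is a globally defined map $\C\to\caM_\QKP/\Gamma_0$. Identifying $\Gamma_1$ with $\C$ by $e^{z\zeta}\mapsto z$ as in the text, this map is the restriction of the orbit map to $\Gamma_1$, so $[L_{W(z)}]=\Phi(\ell(z))$ where now $\ell\colon\C\to J_\R(X)$ is the homomorphism characterised by \eqref{eq:ell}. Because $\Phi$ is injective and $\ell$ is additive, for any lattice $\Lambda\subset\C$ the map $z\mapsto[L_{W(z)}]$ is $\Lambda$-periodic precisely when $\ell(z+\lambda)=\ell(z)$ for all $z\in\C$ and $\lambda\in\Lambda$, that is, precisely when $\ell(z)$ is $\Lambda$-periodic. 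Combining this equivalence with Theorem~\ref{thm:periodic}(c), which equates $\Lambda$-periodicity of $z\mapsto[L_{W(z)}]$ with the existence of $\mu\in\Hom(\Lambda,S^1)$ satisfying $L_W(z+\lambda)=\mu(\lambda)^{-1}L_W(z)\mu(\lambda)$, yields the theorem.

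The step that I expect to require the most care is the claim that the identification in Theorem~\ref{thm:finitetype} of the orbit of $[L_W]$ with $J_\R(X)$ is the one realised by $\ell$, together with the correct handling of inverses and of the subgroup $\Gamma_0$. In particular one must observe that $\ell$ is trivial on $\Gamma_0$ --- gluing trivial bundles by a constant transition function yields the trivial bundle --- so that $\ell$ genuinely descends along $\Gamma_+\to\Gamma_+/\Gamma_0$ and is compatible with the quotient $\caM_\QKP\to\caM_\QKP/\Gamma_0$. Granting that bookkeeping, the remainder of the argument is formal.
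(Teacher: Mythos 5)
Your proposal is correct and follows essentially the same route as the paper, which presents the theorem as an immediate corollary of Theorem~\ref{thm:periodic}(c) together with the identification (via Lemma~\ref{lem:twist}, Remark~\ref{rem:change_of_triv} and Theorem~\ref{thm:finitetype}) of the orbit map $z\mapsto[L_{W(z)}]$ with $z\mapsto\ell(z)$ up to an irrelevant inversion. You have merely made explicit the bookkeeping (injectivity of the identification with $J_\R(X)$, triviality of $\ell$ on $\Gamma_0$, the role of the big-cell hypothesis) that the paper leaves implicit.
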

\begin{rem}\label{rem:S}
This monodromy $\mu\in\Hom(\Lambda,S^1)$ corresponds to a flat $S^1$-bundle $\caS$ over
$\C/\Lambda$, the principal $S^1$-bundle for the dual $L^*$ to our quaternionic holomorphic curve
$L$. But we can also interpret this as follows. Using $\ell$ we can pull back the $S^1$-bundle
$J_\R(X_\fp)$ to an $S^1$-bundle over $\C/\Lambda$, which is a Lie group and isomorphic to $\caS$. 
This comes equipped 
with a flat connexion as follows. By the construction above we have a natural homomorphism of 
real groups
\[
\Gamma_+\to J_\R(X_\fp)\to J_\R(X).
\]
When this is restricted to $\Gamma_1$ it gives $\ell$ and a lift $\ell^\fp:\C\to\caS$ on its
universal cover. This determines a flat connexion on $\caS$ with monodromy which is, by Theorem
\ref{thm:periodic}, $\mu$. 
\end{rem}

\section{Construction of quaternionic holomorphic curves.}\label{sec:cons}

\subsection{Construction via the Baker function.}
In this section we will suppose that $W\in Gr(\H)$ has been
chosen so that the $\Gamma_1$-orbit of $[L_W]$ is a torus $\C/\Lambda$: let
$\mu\in\Hom(\Lambda,S^1)$ be the corresponding monodromy of $L_W$. In that case there is a \qh line
bundle $(E,J,D)$ over $\C/\Lambda$ whose Dirac operator $\caD$ has Dirac potential $U_W$.

For any left $\H$-linear form $\alpha \in W^*$ the function $\psi:\C\to\H$; $\psi=\alpha(\psi_W)$
lies in the kernel of $\caD$. It corresponds to a section in $H^0_D(E)$ whenever
$\psi(z+\lambda)=\mu(\lambda)^{-1}\psi(z)$ for all
$z\in\C,\lambda\in\Lambda$. The dimension of $H^0_D(E)$ will be bounded below by
the number of independent $\alpha \in W^*$ possessing the same monodromy. In particular, with
sufficiently many of these we obtain \qh immersions of $\C/\Lambda$ in $\HP^n$.

Equivalently, to obtain a coordinate free perspective, let $V\subset W$ be an 
$\H$-subspace of $\H$-codimension $n+1$, and suppose it has the two properties:
\begin{enumerate}
\item $\psi_W(z)$ does not belong to $V$ for any $z$ (we will say $V$ is \emph{well-positioned}
in $W$),
\item for every $\lambda\in\Lambda$ we have
\[
\psi_W(z+\lambda)-\mu(\lambda)^{-1}\psi_W(z)\in V, \text{ for all } z\in\C.
\]
\end{enumerate}
Then we may define a map
\begin{equation}\label{eq:f}
f:\C/\Lambda\to \P(W/V)\simeq\HP^n;\quad z+\Lambda\mapsto [\psi_W(z)+V],
\end{equation}
where the square brackets denote the corresponding left $\H$-line in $W/V$.
\begin{thm}\label{thm:f}
The map $f:\C/\Lambda\to \HP^n$ given by \eqref{eq:f} 
is a quaternionic holomorphic curve.
\end{thm}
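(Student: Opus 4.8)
The plan is to verify the defining property of a \qh curve directly from the formula \eqref{eq:f}, namely to exhibit a complex structure $J\in\End_\H(L)$ with $J^2=-I$ on the line bundle $L\subset \underline{W/V}$ pulled back by $f$ and show that $*\delta f=\delta f\circ J$. The essential input is that the Baker function lies in the kernel of the Dirac operator: by the theorem preceding Remark~\ref{rem:KP} we have $\caD\psi_W=0$, i.e. $\partial\psi_W/\partial\bar z = -U_W\psi_W$, and moreover $U_W$ anti-commutes with $i$ (being of the form $j(\cdots)$, or equivalently $U_W=-(a_1+ia_1i)/2$). Conditions (a) and (b) on $V$ guarantee first that $z+\Lambda\mapsto[\psi_W(z)+V]$ is well-defined on the torus and nowhere the zero line, so $f$ is a genuine map into $\HP^n$, and second that the line $L_{z}=\H\cdot(\psi_W(z)+V)$ in $W/V$ descends to $\C/\Lambda$.

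First I would work on the universal cover $\C$, where $L$ is spanned by the single nowhere-zero section $\psi_W(z)+V$. Differentiating, $\delta f$ at $z$ is represented by $d\psi_W \bmod \H\psi_W + V$, and decomposing $d=dz\,\partial + d\bar z\,\bar\partial$ gives two pieces; the holomorphic-with-respect-to-$J$ condition $*\delta f=\delta f\circ J$ with the convention $*dz=idz$ amounts to requiring the $d\bar z$-part of $\delta f$ to equal $(\delta f)(dz)\circ(\text{right multiplication by }{-i})$, i.e. that modulo $\H\psi_W+V$ one has $\bar\partial\psi_W = -(\partial\psi_W)\,i\cdot(\text{something})$. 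Here is where $\caD\psi_W=0$ enters: since $\bar\partial\psi_W=-U_W\psi_W$ and $U_W\psi_W\in\H\psi_W$, the $d\bar z$-component of $\delta f$ vanishes identically modulo the line $\H\psi_W$. Thus $\delta f$ is of type $(1,0)$ in the obvious sense, and one takes $J$ to be the endomorphism of $L$ acting as right multiplication by the appropriate unit quaternion — concretely, $J(\psi_W q)=\psi_W(iq)$ — which is well-defined because $\psi_W$ is nowhere zero, satisfies $J^2=-I$, and makes $*\delta f=\delta f\circ J$ hold trivially since $\delta f$ has no anti-holomorphic part. (If instead $J$ is to act on the left one uses that $U_W$ anti-commutes with $i$ to check the intertwining; either convention works, and the computation is the routine one underlying \eqref{eq:Dirac}.)

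Next I would check that $J$ descends to the torus: by condition (b), $\psi_W(z+\lambda)$ and $\mu(\lambda)^{-1}\psi_W(z)$ represent the same point of $\P(W/V)$, and since $\mu(\lambda)\in S^1\subset\C$ commutes with $i$, the endomorphism ``right multiplication by $i$ on the spanning section'' is the same whether computed from $\psi_W(z+\lambda)$ or from $\mu(\lambda)^{-1}\psi_W(z)$; hence $J$ is well-defined on $L\to\C/\Lambda$. Finally I would note smoothness of $f$: it is an immersion where $\delta f\neq 0$, but even without immersivity it is a \qh curve in the sense of Section~\ref{sec:quat}, since that notion only requires the line bundle $L$ together with the complex structure $J$ intertwining $\delta f$, not that $f$ be an immersion. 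The main obstacle, such as it is, is purely bookkeeping: keeping straight which of the two complex structures on $\H$ (left vs. right multiplication by $i$) is used at each stage, and matching the sign conventions $*dz=idz$ and the form of $U_W$ so that the identity $\caD\psi_W=0$ translates precisely into the vanishing of the $(0,1)$-part of $\delta f$. Once that is pinned down the proof is essentially the observation that a Baker function in $\ker(\caD)$ automatically furnishes the holomorphic framing.
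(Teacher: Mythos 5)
Your proposal is correct and follows essentially the same route as the paper: define $J$ on the line bundle by letting the canonical section $\psi_W+V$ be an $i$-eigenvector, and observe that $\caD\psi_W=0$ forces $\bar\partial\psi_W=-U_W\psi_W$ to lie in the line $\H\psi_W$, so the $(0,1)$-part of $\delta f$ vanishes modulo $L$ and $*\delta f=\delta f\circ J$ holds. Your extra check that $J$ descends to the torus (because $\mu(\lambda)\in S^1$ commutes with $i$) is a point the paper leaves implicit; just be careful that in the paper's left-$\H$-module convention the correct normalisation is $J\tilde f=i\tilde f$, i.e.\ $J(q\tilde f)=(qi)\tilde f$, rather than the right-module formula you wrote.
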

The proof is just an instance of the general principle linking \qh curves to \qh sections of the
dual bundle, but we will give it here since it makes explicit the complex structure $J$ on $L$.
\begin{proof}  
Over $\C$ the map $f$ has a global lift $\tilde f =
\psi_W+V$: we can think of this as a section of the pullback to $\C$ of
the line bundle $L$ corresponding to $f$. This has complex structure
$J$ for which $J\tilde f = i\tilde f$. Now
\[
*\delta f - \delta f\circ J:\tilde f \mapsto 
-2id\bar z\frac{\partial \tilde f}{\partial\bar z}\mod L.
\]
But since 
$\partial\psi_W/\partial\bar z = -U_W\psi_W$ this is identically zero, whence
$f$ is a  quaternionic holomorphic curve.
\end{proof}
\begin{rem}\label{rem:gamma}
For every $\gamma\in\Gamma_-$ we know that $\psi_{W\gamma} =
\psi_W\gamma$. Therefore the \qh curve determined by $(W,V)$ is congruent
(i.e., equivalent up to the right action of $\P GL(n+1,\H)$ on $\HP^n$, created by the
choice of isomorphism $W/V\simeq\H^{n+1}$) to
the one determined by $(W\gamma,V\gamma)$ when $\gamma\in\Gamma_-$. Similarly,
since $\psi_{We^{it_0}} = e^{-it_0}\psi_We^{it_0}$ 
the curve \eqref{eq:f} is unchanged, up to congruence of $\HP^n$, 
by the action of $\Gamma_0$.
\end{rem}
To study the periodicity conditions let us first consider the more general map
\begin{equation}\label{eq:fGamma}
f:\Gamma\to \P(W/V);\quad \gamma\mapsto[\psi_W(\gamma) + V].
\end{equation}
Here we have extended $\psi_W$ to a function on $\Gamma$ by identifying
$\Gamma_+$ with $\Gamma/\Gamma_-$, i.e., $\psi_W$ is constant on $\Gamma_-$ cosets. Now let
\[
\Gamma_V = \{\gamma\in\Gamma;W\gamma = W,V\gamma = V\}.
\]
Then $\Gamma_V$ acts on $W/V$ and we define $\Gamma^\P_V\subset\Gamma_V$ to be the subgroup of
those elements which fix $\P(W/V)$ pointwise. For example, when $W=H_+$ and $V=H_+\zeta^2$ we have
\[
\Gamma_V = \{a_0+a_1\zeta+\cdots\in\Gamma;a_0\in\C^\times\},\quad \Gamma^\P_V
=\{a_0+a_2\zeta^2+\cdots\in\Gamma;a_0\in\R^\times\}.
\]
\begin{lem}
The map $f:\Gamma\to \P(W/V)$ in \eqref{eq:fGamma} is constant on $\Gamma_-\Gamma_0\Gamma^\P_V$ 
cosets, and therefore it descends to $\caJ_V = \Gamma/(\Gamma_-\Gamma_0\Gamma^\P_V)$.
\end{lem}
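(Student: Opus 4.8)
The plan is to split the claim into three independent invariance statements, one per factor, and then chain them. Since $\Gamma$ is abelian, $\Gamma_-\Gamma_0\Gamma^\P_V$ is a subgroup and every element of it factors as $\delta_-\delta_0\delta_V$ with $\delta_-\in\Gamma_-$, $\delta_0\in\Gamma_0$, $\delta_V\in\Gamma^\P_V$; so it suffices to prove, for an arbitrary $\gamma\in\Gamma$, that $f(\gamma\delta)=f(\gamma)$ whenever $\delta$ lies in $\Gamma_-$, in $\Gamma_0$, or in $\Gamma^\P_V$. Applying these three facts in turn to the base points $\gamma\delta_-\delta_0$, then $\gamma\delta_-$, then $\gamma$, gives constancy of $f$ on $\Gamma_-\Gamma_0\Gamma^\P_V$-cosets, hence the descent to $\caJ_V$. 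The $\Gamma_-$ case is immediate from the way $\psi_W$ was extended to $\Gamma$: it is constant on $\Gamma_-$-cosets, so (using commutativity) $\psi_W(\gamma\delta)=\psi_W(\gamma)$ for $\delta\in\Gamma_-$, whence $f(\gamma\delta)=[\psi_W(\gamma\delta)+V]=f(\gamma)$.

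For $\delta=e^{it_0}\in\Gamma_0$ I would show $\psi_W(\gamma e^{it_0})=e^{it_0}\psi_W(\gamma)$. The element $e^{-it_0}\psi_W(\gamma e^{it_0})$ still takes values in $W$, because $W$ is a left $\H$-module (being complex and $j$-invariant), and since $e^{\pm it_0}$ are scalars commuting with every power of $\zeta$ while conjugation fixes the real leading coefficient $1$, one checks that it is normalised correctly at the base point $\gamma$, i.e.\ $pr_+\big((e^{-it_0}\psi_W(\gamma e^{it_0}))\gamma^{-1}\big)=1$; by uniqueness of the Baker function it therefore equals $\psi_W(\gamma)$. (Equivalently, combine \eqref{eq:flow1} with the identity $\psi_{We^{it_0}}(\gamma)=e^{-it_0}\psi_W(\gamma)e^{it_0}$ established in the proof of Theorem~\ref{thm:periodic}(a).) As $V$ is also a left $\H$-submodule, $e^{it_0}V=V$, so $\psi_W(\gamma e^{it_0})+V=e^{it_0}\big(\psi_W(\gamma)+V\big)$; left multiplication by the unit $e^{it_0}\in\H^\times$ preserves the associated left $\H$-line, so $f(\gamma e^{it_0})=f(\gamma)$.

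The $\Gamma^\P_V$ case carries the genuine, if short, content. For $\delta\in\Gamma_V$ (so $W\delta=W$ and $V\delta=V$) the key step is to identify $\psi_W(\gamma\delta)$, up to a nonzero scalar, with $\psi_W(\gamma)\delta$. Indeed $\psi_W(\gamma)\delta$ lies in $W\delta=W$; writing $\delta=\delta_-\delta_+$ with $\delta_\pm\in\Gamma_\pm$ and using that $\Gamma$ is abelian, $\psi_W(\gamma)\delta=(1+O(\zeta^{-1}))\delta_-\,(\gamma\delta)_+$, whose positive-power part is the nonzero constant $\delta_-(\infty)$; hence $\psi_W(\gamma)\delta=\delta_-(\infty)\,\psi_W(\gamma\delta)$. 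Since $V\delta=V$, the elements $\psi_W(\gamma\delta)+V$ and $(\psi_W(\gamma)+V)\delta$ then span the same left $\H$-line in $W/V$, so $f(\gamma\delta)$ is precisely the image of $f(\gamma)$ under the self-map of $\P(W/V)$ induced by right multiplication by $\delta$. When $\delta\in\Gamma^\P_V$ this self-map is by definition the identity on $\P(W/V)$, so $f(\gamma\delta)=f(\gamma)$.

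I expect the only point requiring care to be this last normalisation bookkeeping: recognising that translating the base point of the Baker function by an element of $\Gamma$ merely rescales it by a harmless scalar before passing to $W/V$. Once that is in hand, the rest is a formal consequence of the definitions of $\Gamma_V$, of $\Gamma^\P_V$ as its pointwise stabiliser in $\P(W/V)$, and of left $\H$-lines, together with commutativity of $\Gamma$, so the three cases combine to give that $f$ factors through $\caJ_V=\Gamma/(\Gamma_-\Gamma_0\Gamma^\P_V)$.
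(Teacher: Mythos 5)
Your proof is correct and follows essentially the same route as the paper: reduce to invariance under each factor separately, use the normalisation/uniqueness of the Baker function (equivalently the flow identity \eqref{eq:flow1}) to show $\psi_W(\gamma\delta)$ agrees with $\psi_W(\gamma)\delta$ up to a harmless scalar, and then invoke the definitions of $\Gamma_V$ and $\Gamma^\P_V$. Your explicit tracking of the $\Gamma_-$-component of $\delta\in\Gamma^\P_V$ and the resulting positive real factor $\delta_-(\infty)$ is a welcome piece of bookkeeping that the paper's proof leaves implicit, but it does not change the argument.
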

\begin{proof}
It suffices to check the invariance of $f$ along $\Gamma_0\Gamma^\P_+$ cosets. Let
$\gamma'\in\Gamma$ and $\gamma\in\Gamma^\P_+$, then by \eqref{eq:flow1} we have
\[
\psi_W(\gamma'\gamma^{-1}) = \psi_{W\gamma}(\gamma')\gamma^{-1}.
\]
Therefore, modulo $V$ this is independent of $\gamma$. Similarly, by \eqref{eq:flow1} and the proof of
Theorem \ref{thm:periodic}(a),
\begin{equation}\label{eq:psit_0}
\psi_W(\gamma'e^{-it_0}) = e^{-it_0}\psi_W(\gamma')
\end{equation}
and therefore the projective map $f$ is constant along $\Gamma_0$-orbits.
\end{proof}
\begin{cor}\label{cor:sufficient}
Let $\ell_V:\C\to\caJ_V$ be the homomorphism $($of real groups$)$ obtained by the 
composition $\C\simeq\Gamma_1\to\Gamma\to\caJ_V$. Then $f:\C\to\P(W/V)$ factors through this.
Hence a sufficient condition for \eqref{eq:f} to be $\Lambda$-periodic is that $\ell_V$ be 
$\Lambda$-periodic.
\end{cor}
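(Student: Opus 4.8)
The plan is to obtain the corollary as an immediate specialisation of the preceding lemma. That lemma asserts that the map $f:\Gamma\to\P(W/V)$ of \eqref{eq:fGamma} is constant on the cosets of $\Gamma_-\Gamma_0\Gamma^\P_V$, so it descends to a well-defined map $\bar f:\caJ_V\to\P(W/V)$ on $\caJ_V=\Gamma/(\Gamma_-\Gamma_0\Gamma^\P_V)$; since $\Gamma$ is abelian this quotient is a group and the projection $\Gamma\to\caJ_V$ is a homomorphism of real groups.

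The first step is then to identify the map \eqref{eq:f} with the restriction of \eqref{eq:fGamma} to the one-parameter subgroup $\Gamma_1$. Under the chosen identification $\Gamma_1\simeq\C$, $e^{z\zeta}\mapsto z$, and recalling that $\psi_W$ was extended to $\Gamma$ by treating it as constant on $\Gamma_-$-cosets (which is consistent because $\psi_{W\gamma}=\psi_W\gamma$ for $\gamma\in\Gamma_-$), one has $f(e^{z\zeta})=[\psi_W(z)+V]$, which is precisely \eqref{eq:f}. By definition $\ell_V$ is the composite $\C\simeq\Gamma_1\hookrightarrow\Gamma\to\caJ_V$, so composing $\bar f$ with it recovers \eqref{eq:f}; this is the claimed factorisation $f=\bar f\circ\ell_V$.

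For the periodicity statement, suppose $\ell_V$ is $\Lambda$-periodic, i.e. $\ell_V(z+\lambda)=\ell_V(z)$ for all $z\in\C$ and $\lambda\in\Lambda$ (equivalently $e^{\lambda\zeta}\in\Gamma_-\Gamma_0\Gamma^\P_V$ for every $\lambda\in\Lambda$). Then $f(z+\lambda)=\bar f(\ell_V(z+\lambda))=\bar f(\ell_V(z))=f(z)$, so the map \eqref{eq:f} is $\Lambda$-periodic, i.e. it descends to $\C/\Lambda$ and defines a \qh curve on the torus by Theorem \ref{thm:f}.

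I do not expect a genuine obstacle here: all the real content --- the invariance of $f$ on $\Gamma$ under $\Gamma_-$, $\Gamma_0$ and $\Gamma^\P_V$ --- is already packaged in the preceding lemma. The only points that need a moment's care are purely definitional: that restricting the extended function $\psi_W$ on $\Gamma$ to $\Gamma_1$ reproduces the function $z\mapsto\psi_W(z)$ appearing in \eqref{eq:f}, and that the subgroup by which $\caJ_V$ is formed is exactly the one named in the lemma, so that $\ell_V$ and $\bar f$ are compatible.
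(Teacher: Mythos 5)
Your argument is correct and is exactly the intended one: the paper states the corollary without proof, treating it as an immediate consequence of the preceding lemma, and your write-up simply unwinds that — identifying \eqref{eq:f} with the restriction of \eqref{eq:fGamma} to $\Gamma_1$ and composing the descended map $\bar f$ with $\ell_V$. No gaps; the definitional checks you flag are the only points requiring care, and you handle them properly.
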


\subsection{A construction for solutions of finite type.}\label{subsec:finite} 

Let us now consider a more concrete version of the above construction which
applies to solutions of finite type. The spectral data 
$(X,\rho,P,\zeta,\caL,\varphi)$ provides us with $W$; we can choose a
$\H$-codimension $n+1$ $\H$-subspace $W_\fq$ of $W$ as follows. Fix a $\rho$-invariant
divisor $\fq$ consisting of $2n+2$ distinct smooth points in $X\setminus\{P,\rho P\}$.
We will write
\[
\fq=Q_0+\cdots+Q_n+\rho(Q_0)+\cdots+\rho(Q_n).
\]
We may choose the local parameter disc $\Delta_P$ so that $\fq\subset X_0$. Let $W_\fq$
correspond to the subspace $H^0(X_0,\caL(-\fq))$ of holomorphic
sections of $\caL$ which vanish on $\fq$: it is a left $\H$-subspace since $\fq$ is
$\rho$-invariant. Since $X_0$ is a Stein manifold it
follows, by calculating the cohomology of the sheaf exact sequence
\begin{equation}\label{eq:Lq}
 0\to \caL(-\fq)\to\caL\to\caL_\fq\to 0,
\end{equation} 
that $\dim_\C(W/W_\fq)=\deg(\fq)=2n+2$.


Now consider the dependence of $f$ on the choice of trivialisation
$\varphi$. By Remark \ref{rem:change_of_triv} any change of trivialisation
has the form $\varphi\mapsto\varphi e^{s_0+it_0}\gamma'$ where
$\gamma'\in\Gamma_-$. By Remark \ref{rem:gamma} this only alters $f$ by 
congruence. Further, a change of local parameter $\zeta$ amounts to rescaling the parameter $z$ 
in the Dirac operator. Therefore it is enough to be given $(X,\rho,P,\caL,\fq)$ to have $f$
well-defined up to congruence in $\HP^n$: we may as well take $\psi_W$ to be the 
global Baker function.
The condition above that $f$ be well-defined is that $\psi_W$ does not vanish on $\fq$ for any
$z\in\C$. We can think of $W_\fq$ as the kernel of $n+1$ left $\H$-linear forms
$\ev_{Q_0},\ldots,\ev_{Q_n}$ on $W$, where $\ev_Q$ is obtained by extending the map ``evaluation
at $Q$'' to all of $W$ (it is straightforward to check that $\ev_{\rho Q}=\ev_Qu$ for some
$u\in\H^\times$). In this case, in homogeneous coordinates, and up to congruence in $\HP^n$,
\begin{equation}\label{eq:fcoords}
f(z) = [\psi_W(z,Q_0),\ldots,\psi_W(z,Q_n)].
\end{equation}
Further, for any $\H$-codimension two subspace $V$ satisfying $W_\fq\subset V\subset W$ we obtain a
projection of $f$ onto $\P(W/V)\simeq\HP^1$, which will be a conformal immersion provided $V$ is
well-positioned in $W$. Equally, this arises from a well-positioned two $\H$-dimensional linear system
in the dual space $(W/W_\fq)^*$.

To study the periodicity conditions for $f$ when it arises from
this construction we begin by showing that when $V=W_\fq$ the group $\caJ_V$ appearing in Lemma
\ref{cor:sufficient} is isomorphic to a real subgroup of a generalised Jacobi variety.
Let $X_\fq$ be the singularization of $X$ obtained by identifying the points of $\fq$ together
simply. 
The real involution $\rho$ descends to $X_\fq$, so its generalised Jacobian $\Jac(X_\fq)$ has
a real subgroup $J_\R(X_\fq)$ which is the connected component of the identity
of the real subgroup of $\bar\rho^*$-fixed line bundles over $X_\fq$. 
Let $X_\fq^\sm$ denote the curve of smooth points on $X_\fq$ (this is just $X^\sm\setminus\fq$)
and let $\caA_P^\fq:X_\fq^\sm\to \Jac(X_\fq)$ denote the Abel map
for $X_\fq$, with base point $P$. Let $\ell^\fq:\C\to J_\R(X_\fq)$ 
be the unique homomorphism of real groups determined by the property
\begin{equation}\label{eq:tangency}
\frac{\partial \ell^\fq}{\partial z}|_{z=0} = 
\frac{\partial\caA^\fq_P}{\partial\zeta^{-1}}|_{\zeta^{-1}=0}.
\end{equation}
\begin{lem}\label{lem:pair}
For a pair $(W,W_\fq)$ given by spectral data $(X,\rho,P,\caL,\fq)$ in the manner above, 
we have $\caJ_{W_\fq}\simeq J_\R(X_\fq)$ and $\ell_{W_\fq} = \ell^\fq$.
\end{lem}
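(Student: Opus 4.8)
The plan is to identify $\caJ_{W_\fq}$ explicitly as a quotient of $\Gamma$ and match it term-by-term with a description of $J_\R(X_\fq)$ as a quotient of the degree-zero real line bundles on $X$. Recall that by Lemma~\ref{lem:twist} the group $\Gamma$ acts on the spectral data by $W\gamma^{-1}\leftrightarrow (\caL\otimes\ell(\gamma),\varphi\gamma)$, giving the epimorphism $\ell:\Gamma\to J_\R(X)$ whose kernel is the stabiliser of the isomorphism class of $\caL$ together with the trivialisation up to those $\gamma\in\Gamma_-$ that merely rescale. The first step is to compute $\Gamma_{W_\fq}=\{\gamma:W\gamma=W,\ W_\fq\gamma=W_\fq\}$: since $W$ encodes $(X,\caL)$ and $W_\fq$ in addition encodes the vanishing of sections along $\fq$, the condition $W\gamma=W$ forces $\ell(\gamma)$ to lie in the kernel of $\ell$ modulo $\Gamma_-$, i.e.\ $\gamma\in\Gamma_-\Gamma_0$ essentially, and the extra condition $W_\fq\gamma=W_\fq$ is then automatic once we track what twisting by $\ell(\gamma)$ does to sections vanishing on $\fq$. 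I would then show that $\Gamma^\P_{W_\fq}$ — the elements fixing $\P(W/W_\fq)$ pointwise — corresponds exactly to the further freedom of rescaling all the evaluations $\ev_{Q_0},\dots,\ev_{Q_n}$ simultaneously by a common real scalar, which is the translation by a line bundle on $X$ that becomes trivial on $X_\fq$ when we also remember the fibre identifications at the points of $\fq$.

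The cleanest route is via the standard exact sequence for a generalised Jacobian: collapsing the $2n+2$ points of $\fq$ to $n+1$ ordinary double points gives
\[
1\longrightarrow (\C^\times)^{n+1}\longrightarrow \Jac(X_\fq)\longrightarrow \Jac(X)\longrightarrow 1,
\]
where the factor $(\C^\times)^{n+1}$ records the gluing data at each pair $Q_k\sim\rho Q_k$ (one $\C^\times$ per identified pair, since $\rho$ links the two branches). Taking the $\bar\rho^*$-fixed real subgroups and connected components of the identity replaces $(\C^\times)^{n+1}$ by $(\R^\times_{>0}\times S^1)^{n+1}$ or, after dividing by the real scalings that are already absorbed into $\caM$, by $(S^1)^{n+1}$, giving $J_\R(X_\fq)$ as an $(S^1)^{n+1}$-bundle over $J_\R(X)$. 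On the Grassmannian side, $\caJ_{W_\fq}=\Gamma/(\Gamma_-\Gamma_0\Gamma^\P_{W_\fq})$ is visibly an extension of $\Gamma/(\Gamma_-\Gamma_0 \Gamma_{W_\fq})\cong J_\R(X)$ (using the epimorphism $\ell$ and the computation of $\Gamma_{W_\fq}$ from the first step) by the group $\Gamma_{W_\fq}/(\Gamma_0\Gamma^\P_{W_\fq})$, which by the model computation (e.g.\ the explicit $\Gamma_V$, $\Gamma^\P_V$ displayed for $W=H_+$, $V=H_+\zeta^2$) is a torus of rank $n+1$ mapping onto the fibre by matching each generator with one gluing parameter at a point of $\fq$. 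Verifying that these two extensions are isomorphic as $S^1$-bundles (not merely that the groups have the right shape) is where one must be careful: I would pin it down by comparing how $\Gamma$ acts on the one-dimensional pieces, i.e.\ how twisting $\psi_W$ by $\gamma\in\Gamma$ scales the evaluation $\psi_W(z,Q_k)$, against how $\Jac(X_\fq)$ acts on the fibre of $\caL$ at $Q_k$.

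The identity $\ell_{W_\fq}=\ell^\fq$ then follows by the same differential-of-Abel-map argument already used for \eqref{eq:ell}: both homomorphisms $\C\to J_\R(X_\fq)$ are determined by their derivative at $z=0$, and on the Grassmannian side $\ell_{W_\fq}$ is induced by $\Gamma_1\ni e^{z\zeta}\mapsto$ the twist of the trivial bundle on $X_\fq$ by $e^{z\zeta}$ about $P$ (and its $\rho^*$-conjugate about $\rho P$), whose derivative is $\partial\caA^\fq_P/\partial\zeta^{-1}|_{\zeta^{-1}=0}$ by the local computation of how a twist by $e^{z\zeta}$ moves a point along the Jacobian. Since $X_\fq^\sm = X^\sm\setminus\fq$ contains $P$ with the same local parameter $\zeta^{-1}$, this derivative computation is literally the one behind \eqref{eq:ell} performed on $X_\fq$ instead of $X$, so the tangency condition \eqref{eq:tangency} is satisfied and uniqueness gives $\ell_{W_\fq}=\ell^\fq$.

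I expect the main obstacle to be the middle step: showing that the extension $\caJ_{W_\fq}$ of $J_\R(X)$ by an $(S^1)^{n+1}$ agrees \emph{as a bundle} (with its group structure) with the generalised-Jacobian extension, rather than just checking the two are abstractly $(S^1)^{n+1}$-bundles of the same topological type. The right bookkeeping is to carry along the trivialisations $\tau_P(\gamma),\tau_0(\gamma)$ of $\ell(\gamma)$ introduced around \eqref{eq:transition} and track, for each point $Q_k\in\fq$, the induced scaling on the fibre $\caL_{Q_k}$; matching this with the $k$-th $\C^\times$-factor in $\Jac(X_\fq)$ — and its $\bar\rho^*$-real form — is a finite linear-algebra check once the notation is set up, but it is the crux.
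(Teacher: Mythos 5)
The central step of your argument rests on a wrong model of the singular curve $X_\fq$. In the paper $X_\fq$ is obtained by identifying \emph{all} $2n+2$ points of $\fq$ to a \emph{single} point, not by forming $n+1$ ordinary double points $Q_k\sim\rho Q_k$. This is forced by the definition of $\Gamma^\P_{W_\fq}$: an element of $\Gamma$ fixes $\P(W/W_\fq)$ \emph{pointwise} only if it acts on $W/W_\fq\simeq\bigoplus_{Q\in\fq}\caL_Q$ by one common scalar, i.e.\ only if the corresponding function on $X_0$ takes the same value at \emph{every} point of $\fq$; a function taking equal values merely on each pair $Q_k,\rho Q_k$ fixes the coordinate lines but not all lines. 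Consequently the kernel of $\Jac(X_\fq)\to\Jac(X)$ is $\caO_\fq^\times/\C^\times\simeq(\C^\times)^{2n+2}/\C^\times$ (dimension $2n+1$), not $(\C^\times)^{n+1}$, and the fibre of $J_\R(X_\fq)\to J_\R(X)$ is $(\C^\times)^{n+1}/\R^\times$ (real dimension $2n+1$), not $(S^1)^{n+1}$ (real dimension $n+1$); see remark \ref{rem:Jac(Xq)}. With your exact sequence the two sides of the asserted isomorphism do not even have the same dimension once $n\geq 1$. Relatedly, your computation of $\Gamma_{W_\fq}$ omits $\Gamma_X$ (boundary values of real holomorphic functions on $X\setminus\{P,\rho P\}$): $W\gamma=W$ does not force $\gamma$ into $\Gamma_-\Gamma_0$, and it is exactly the image of $\Gamma_X$ that produces the fibre group.

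You also leave unexecuted the step you yourself identify as the crux, namely matching the two extensions as groups rather than as abstract bundles. The paper's proof avoids this bookkeeping entirely by a more direct route: extend $\ell^\fq$ to a surjective real homomorphism $\Gamma\to J_\R(X_\fq)$ by using $\gamma$ and $\overline{\rho^*\gamma}$ as transition functions over the singularised curve, and compute its kernel in one stroke. Indeed $\gamma\in\ker(\ell^\fq)$ precisely when $\gamma=\alpha\beta$ with $\alpha$ extending holomorphically to $\Delta_P$ (these boundaries are exactly $\Gamma_-\Gamma_0$) and $\beta$ extending holomorphically to $(X_\fq)_0$, i.e.\ $\beta$ is a holomorphic function on $X_0$ taking one common value on all of $\fq$ --- which is precisely the condition that $\beta$ act by scaling on $W/W_\fq$, i.e.\ $\beta\in\Gamma^\P_{W_\fq}$. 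Hence $\ker(\ell^\fq)=\Gamma_-\Gamma_0\Gamma^\P_{W_\fq}$ and both assertions of the lemma follow at once, with no separate tangency argument needed for $\ell_{W_\fq}=\ell^\fq$. Your final paragraph's Abel-map derivative argument would be acceptable once the correct isomorphism is in place, but as written it is built on the wrong $\Jac(X_\fq)$.
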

\begin{proof}
First we note that an equivalent way of describing $\ell^\fq$ above is that the line bundle
$\ell^\fq(z)$ is obtained using transition functions $e^{z\zeta}$ and $e^{\bar
z\overline{\rho^*\zeta}}$ to glue together trivial bundles as in \eqref{eq:transition}, but where
$X\setminus\{P,\rho P\}$ is replaced by its singularisation $X_\fq\setminus\{P,\rho P\}$. This
extends naturally to a real homomorphism $\ell^\fq:\Gamma\to J_\R(X_\fq)$ 
by using $\gamma$ and $\overline{\rho^*\gamma}$ as transition functions. 

Now $\gamma$ lies in $\ker(\ell^\fq)$ precisely when $\gamma$ factorises into a product
$\gamma=\alpha\beta$ where $\alpha$ extends holomorphically to $\Delta_P$ and $\beta$ extends
holomorphically to $(X_\fq)_0$, i.e., $\beta$ represents the boundary of a holomorphic function on
$X_0$ which takes the same value at each point of $\fq$. 
Clearly $\Gamma_-\Gamma_0$ equals the group of all boundaries of the type $\alpha$, while 
the boundaries of the type $\beta$ are exactly those which, by multiplication, fix
\[
H^0(X_0,\caL)/H^0(X_0,\caL(-\fq))\simeq W/W_\fq
\]
projectively, since they act by scaling. Hence $\ker(\ell^\fq) =
\Gamma_-\Gamma_0\Gamma^\P_{W_\fq}$, whence the result.
\end{proof}
\begin{rem}\label{rem:Jac(Xq)}
One knows (from e.g., \cite{Ser}) that $\Jac(X_\fq)$ is a group extension of $\Jac(X)$:
\begin{equation}\label{eq:extension}
1\to K\to \Jac(X_\fq)\stackrel{\pi_\fq}{\to} \Jac(X)\to 1,
\end{equation}
where $K$ is a linear algebraic group. In our case $K\simeq (\C^\times)^{2n+2}/\C^\times$.
The real automorphism
$\bar\rho^*$ acts on $\Jac(X_\fq)$ and preserves $K$. Define $K_\R= K\cap J_\R(X_\fq)$,
then
\[
K_\R\simeq (\C^\times)^{n+1}/\R^\times.
\]
This is the kernel of the restriction of $\pi_\fq$ to $J_\R(X_\fq)$.
\end{rem}
We can now achieve a more precise understanding of how the map $f$ factors through $\Jac(X_\fq)$ by
considering the natural twistor lift  which every \qh curve
$f:M\to\HP^n$ possesses. This is a map $\hat f:M\to\CP^{2n+1}$ for which
$T\circ\hat f = f$, where $T:\CP^{2n+1}\to\HP^n$ is the twistor projection assigning to every
left $\C$-line in $\H^{n+1}$ its left $\H$-line. The lift arises as follows: the corresponding line
bundle $L\subset\underline{\H}^{n+1}$ possesses a unique complex structure $J$ for which 
$*\delta f = \delta f J$. The twistor lift $\hat f$ is given by the complex line subbundle $\hat
L\subset L$ of $i$-eigenspaces 
of $J$. Now let $\tilde f:\C\to\H^{n+1}$ represent $\psi_W+W_\fq$ in some choice of
coordinates on $W/W_\fq$ and write $f = \H\tilde f$. It
follows from the proof of Theorem \ref{thm:f} that $\hat f = \C\tilde f$. 
\begin{thm}\label{thm:composite}
Given spectral data as above, the natural twistor lift of $f:\C\to\HP^n$ 
is a composite of the form
\begin{equation}\label{eq:composite}
\hat f:\C\stackrel{\ell^{\fq}}{\to}\Jac(X_\fq)\stackrel{\theta}{\to}\CP^{2n+1},
\end{equation}
where $\theta$ is a rational map.
\end{thm}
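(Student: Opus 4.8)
The plan is to compute the homogeneous coordinates of the twistor lift from the global Baker function, recognise them as the values on $\fq$ of a normalised holomorphic section of a line bundle on $X$, observe that this section together with its evaluation at $\fq$ depends only on the associated point of $\Jac(X_\fq)$, and then appeal to the classical theta-function formula for that section. First I would take $\psi_W$ to be the global Baker function of Remark~\ref{rem:global}, so $\psi_W = \psi_1 - j\psi_2$ with $\psi_1 = \sigma_\gamma/\varphi$, $\psi_2 = \bar\rho^*\sigma_\gamma/\varphi$ and $\gamma = e^{z\zeta}$, and use $\ev_{Q_0},\dots,\ev_{Q_n}$ to identify $W/W_\fq$ with $\H^{n+1}$, so that $f(z) = [\psi_W(z,Q_0),\dots,\psi_W(z,Q_n)]$. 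Since $\hat f = \C\tilde f$ with $\tilde f = \psi_W + W_\fq$ (as recorded just before the statement) and left multiplication by $i$ is the complex structure on this line, decomposing each quaternion as $\psi_W(z,Q_m) = \psi_1(z,Q_m) - \overline{\psi_2(z,Q_m)}\,j$ with respect to $\H = \C\oplus\C j$ identifies
\[
\hat f(z) = \bigl[\psi_1(z,Q_0):-\overline{\psi_2(z,Q_0)}:\dots:\psi_1(z,Q_n):-\overline{\psi_2(z,Q_n)}\bigr]\in\CP^{2n+1}.
\]
Because $\bar\rho^*$ is conjugate linear and covers $\rho$, relative to fixed local trivialisations of $\caL$ at the smooth points of $\fq$ one has $(\bar\rho^*\sigma)(Q) = \overline{c_Q\,\sigma(\rho Q)}$ with $c_Q$ independent of $z$; since $\fq$ is $\rho$-invariant, $\rho Q_m\in\fq$, so $-\overline{\psi_2(z,Q_m)}$ is a fixed nonzero multiple of $\sigma_\gamma(\rho Q_m)$. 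Hence, after rescaling the coordinates by fixed nonzero constants (a congruence of $\CP^{2n+1}$, harmless here), $\hat f(z)$ is the point whose $2n+2$ homogeneous coordinates are the values of $\sigma_\gamma$ at the points of $\fq$, taken with respect to fixed trivialisations of $\caL\otimes\ell(\gamma)$ there.

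Next, by the same invariances of $\psi_W$ used in the Lemma preceding Corollary~\ref{cor:sufficient}, the map $\hat f = \C\tilde f$ is constant on $\Gamma_-\Gamma_0\Gamma^\P_{W_\fq}$-cosets just as $f$ is, so it descends to $\caJ_{W_\fq}$; combined with the lemma identifying $\caJ_{W_\fq}$ with $J_\R(X_\fq)$ and $\ell_{W_\fq}$ with $\ell^\fq$, this gives $\hat f = \theta\circ\ell^\fq$ for some $\theta$ on the image of $\ell^\fq$. Concretely: a point of $\Jac(X_\fq)$ is a line bundle on $X$ together with a $\bar\rho^*$-compatible identification, up to a common scalar, of its fibres over $\fq$; the normalised section $\sigma_\gamma$ depends only on the bundle $\caL(-\rho P)\otimes\ell(z)$ on $X$, while the projective point formed from the values $\sigma_\gamma(Q)$, $Q\in\fq$, depends in addition only on the fibre identification over $\fq$, and $\ell^\fq(z)$ records exactly this pair of data. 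It remains to extend $\theta$ to a rational map $\theta\colon\Jac(X_\fq)\dashrightarrow\CP^{2n+1}$.

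For rationality, on the Zariski-dense open subset of $\Jac(X)$ on which $\caL(-P-\rho P)\otimes L$ is non-special there is a unique normalised $\sigma_L\in H^0(X,\caL(-\rho P)\otimes L)$ with $\sigma_L/\varphi$ equal to $1$ at $P$, and the Riemann theta-function expression for such a section (as in \cite{SegW,McI98}) exhibits $\sigma_L(Q)$, for each fixed smooth $Q$, as a ratio of translates of the theta divisor, hence a rational function of $L\in\Jac(X)$. The quotient $\pi_\fq\colon\Jac(X_\fq)\to\Jac(X)$ of \eqref{eq:extension} is a morphism of algebraic groups, so $L'\mapsto\sigma_{\pi_\fq(L')}(Q)$ is rational on $\Jac(X_\fq)$, and replacing the evaluation of $\sigma_{\pi_\fq(L')}$ at $Q_m$ relative to $\varphi$ by the evaluation relative to the fibre identification carried by $L'$ multiplies it by the corresponding character of the linear algebraic group $K\simeq(\C^\times)^{2n+2}/\C^\times$ of Remark~\ref{rem:Jac(Xq)}, i.e.\ by a Laurent monomial, again rational on $\Jac(X_\fq)$. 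So every homogeneous coordinate of $\theta$ lies in the function field of $\Jac(X_\fq)$, making $\theta$ rational; and since $\hat f$ is defined on all of $\C$ while $\theta\circ\ell^\fq$ is defined off the proper subvariety where $\ell(z)$ fails this non-speciality (proper by Theorem~\ref{thm:bigcell}), the identity \eqref{eq:composite} follows.

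I expect the genuine obstacle to be the last step: not the classical fact that $\sigma_L$ varies rationally with the bundle on $X$, but the verification that the extra datum distinguishing $\Jac(X_\fq)$ from $\Jac(X)$ enters the evaluations $\sigma_L(Q)$, $Q\in\fq$, only through a character of the linear part $K$, so that the passage from $\Jac(X)$ to $\Jac(X_\fq)$ preserves rationality. The coordinate computation in the first paragraph also needs some care with the sign and conjugation conventions of Sections~\ref{sec:quat}--\ref{sec:qkp} to confirm the conjugations cancel as claimed, but that is bookkeeping rather than a real difficulty.
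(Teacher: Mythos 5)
Your proof has the same skeleton as the paper's: identify the homogeneous coordinates of $\hat f(z)$ with the values of the normalised global section $\sigma_\gamma$ at the $2n+2$ points of $\fq$ (your first paragraph is a correct, explicit coordinate version of this, and the conjugations do cancel as you claim, since $\bar\rho^*\sigma_\gamma/\varphi = \overline{\rho^*(\sigma_\gamma/\varphi)}\cdot(\bar\rho^*\varphi/\varphi)$ with the second factor a fixed meromorphic function); observe that this projective point depends only on $\ell^\fq(z)\in\Jac(X_\fq)$; and then show the resulting map $\theta$ is rational. Where you diverge is in the last step. The paper makes $\theta$ manifestly algebraic by introducing the bundle $\caE\to\Jac(X)$ with fibres $H^0(X,(\caL\otimes L)_\fq)$, invoking the embedding of $\Jac(X_\fq)$ into the projective frame bundle of $\caE$ from \cite{McI01} to get a canonical algebraic trivialisation $\tau$ of $\P\pi_\fq^*\caE$, and writing $\theta=\tau\circ s$ with $s$ the algebraic section $L'\mapsto H^0(X,\caL(-\rho P)\otimes\pi_\fq(L'))$ over the affine open $\caU$ where non-speciality holds. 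You instead argue rationality by hand: theta-function formulae for $\sigma_L(Q)$ as a function of $L\in\Jac(X)$, composed with $\pi_\fq$, corrected by the monomial action of the linear part $K$ on the evaluations over $\fq$. Your route is more concrete and correctly isolates the one nontrivial point (that the fibre datum of $\Jac(X_\fq)$ over $\Jac(X)$ enters only through the tautological $(\C^\times)^{2n+2}/\C^\times$-action on the coordinate vector, which is exactly what the paper's canonical trivialisation $\tau$ encodes); the paper's route buys uniformity and avoids choosing coordinates on the $K$-torsor fibres.

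The one genuine gap is that your rationality step, as written, assumes $X$ is smooth: the spectral data of this paper explicitly allows $X$ to be singular, reduced but not irreducible (two components swapped by $\rho$), and in that setting ``the Riemann theta-function expression for $\sigma_L(Q)$ as a ratio of translates of the theta divisor'' is not available off the shelf --- $\Jac(X)$ is itself already an extension of an abelian variety by a linear group, and the section $\sigma_L$ must be shown to vary algebraically over that generalised Jacobian. The substance of what is needed is that $L\mapsto H^0(X,\caL(-\rho P)\otimes L)$ defines an algebraic line subbundle of $\caE$ over the open set $\caU$; the paper gets this (together with the algebraicity of the trivialisation over $\fq$) from the construction in \cite{McI01}, which is stated for generalised Jacobians. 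So to cover the theorem in the stated generality you would either need to replace the theta-function appeal by this more abstract algebraicity statement, or restrict your argument to smooth $X$. A second, minor, imprecision: ``the corresponding character of $K$'' is not literally a function on $\Jac(X_\fq)$, since the fibres of $\pi_\fq$ are $K$-torsors rather than copies of $K$; this is harmless for rationality (which can be checked on a dense open after a local section) but should be said.
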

\begin{proof}
We will show that $\hat f$ comes from a construction similar to the one given in \cite{McI01} for
harmonic tori. To this end, let $\caE\to\Jac(X)$ denote the complex rank $2n+2$ vector bundle with
fibres
$\caE_L = H^0(X,(\caL\otimes L)_\fq)$. 
It was shown in \cite{McI01} that we can embed $\Jac(X_\fq)$ into the bundle of
(complex) projective frames of $\caE$ and therefore the tautological section of 
the pullback $\pi_\fq^*\Jac(X_\fq)$ of $\Jac(X_\fq)$ over itself globally (and
algebraically) trivialises bundle $\P\caE'$ of complex projective space of $\caE'=\pi_\fq^*\caE$, 
by canonically identifying each fibre of $\P\caE'$ with $\P H^0(X,\caL_\fq)\simeq \CP^{2n+1}$. 
This works as follows: each point of $\Jac(X_\fq)$
should be thought of as a line bundle $L\in\Jac(X)$ equipped with a trivialising section of $L_\fq$
determined up to scaling. This fixes a projective identification of
$(\caL\otimes L)_\fq$ with $\caL_\fq$ by tensor product. We will denote this canonical 
trivialisation by
\begin{equation}\label{eq:tau}
\tau:\P\caE'\to\Jac(X_\fq)\times \P H^0(X,\caL_\fq).
\end{equation}
Now define
\[
\caU = \{L'\in\Jac(X_\fq);\caL(-P-\rho P)\otimes \pi_\fq(L')\ \text{is non-special}\}.
\]
This is an affine open subvariety of $\Jac(X_\fq)$ and it is a consequence of Theorem \ref{thm:bigcell}
that $J_\R(X_\fq)\cap\caU$ is the complement of a real analytic subvariety of $J_\R(X_\fq)$. 
For each $L\in \pi_\fq(\caU)$ the vector space $H^0(X,\caL(-\rho P)\otimes L)$ is one-dimensional and
therefore the bundle $\P\caE'$ possesses an algebraic section $s:\caU\to\P\caE'$ corresponding to the
injection
\begin{equation}\label{eq:twistor}
H^0(X,\caL(-\rho P)\otimes L)\to H^0(X,\caL\otimes L)\to H^0(X,(\caL\otimes L)_\fq).
\end{equation}
Now we define $\theta:\Jac(X_\fq)\to\CP^{2n+1}$ by $\theta = \tau\circ s$, having fixed some projective
linear identification of $ \P H^0(X,\caL_\fq)$ with $\CP^{2n+1}$. This is algebraic on the
affine open subvariety $\caU$ and therefore rational. The theorem is proved once we have shown
that, via the identification 
\[
H^0(X,\caL_\fq) = H^0(X_0,\caL)/H^0(X_0,\caL(-\fq))\stackrel{w}{\to} W/W_\fq,
\]
derived from \eqref{eq:w}, the map $\hat f$ corresponds to $\theta\circ\ell^\fq$. To see this we
note that $\ell^\fq(z)$ is defined using the transition relation \eqref{eq:transition} to glue
together the trivial bundles over $X\setminus X_0$ and $(X_\fq)_0$, and therefore can be thought of
as $\ell(z)$ equipped with the trivialisation $\tau_0(z)$ restricted to $\fq$. But now we recall
that the global Baker function in \eqref{eq:global} is the result of applying \eqref{eq:w} to the
global section $\sigma_\gamma\in H^0(X,\caL(-\rho P)\otimes\ell(\gamma))$ pulled back to a section
of $\caL$ over $X_0$ by $\sigma_z\to\sigma_z\tau_0(z)^{-1}$. When we restrict this to
$\caL_\fq$ we see that this generates the line $\tau\circ s\circ\ell^\fq(z)$ in $H^0(X,\caL_\fq)$.
Hence $\hat f = \theta\circ\ell^\fq$. 
\end{proof}
\begin{rem}\label{rem:T}
From the previous proof we see that $f=T\circ\hat f$ is directly obtained from the $\H$-line 
subbundle of $\caE'$ whose fibres are the left $\H$-lines 
$H^0(X,\caL\otimes L)\subset H^0(X,(\caL\otimes L)_\fq)$. Notice that if $W_\fq\subset V\subset W$
for a well-positioned $\H$-codimension two subspace $V$ and $f_V:\C\to\HP^1$ is the corresponding 
projection of $f$ then the twistor lift of $f_V$ is the projection of $\hat f$ onto $\CP^3$, i.e.,
the map 
\[
\hat f_V:\C\to\P_\C(W/V);\quad z\mapsto \C.(\tilde f(z)+V).
\]
\end{rem}
Now we return to the question of when the map $f$ is doubly periodic. 
\begin{thm}\label{thm:fperiodic}
Let $f:\C\to\HP^n$ correspond to the spectral data above and suppose that it is linearly full
$($i.e., its image does not lie in some $\HP^{n-1})$.
Then $f$ and $\ell:\C\to\Jac(X)$ are simultaneously $\Lambda$-periodic if 
and only if $\ell^\fq$ is $\Lambda$-periodic.
\end{thm}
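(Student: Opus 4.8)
The plan is to read off both directions from Theorem \ref{thm:composite}, which presents the twistor lift as $\hat f = \theta\circ\ell^\fq$, together with the group extension \eqref{eq:extension} and the identification $\ell = \pi_\fq\circ\ell^\fq$ coming from the preceding lemma (which says $\caJ_{W_\fq}\simeq J_\R(X_\fq)$ and $\ell_{W_\fq}=\ell^\fq$). The ``if'' direction is then almost immediate and does not need linear fullness: if $\ell^\fq$ is $\Lambda$-periodic, then $\ell=\pi_\fq\circ\ell^\fq$ is $\Lambda$-periodic, and by Corollary \ref{cor:sufficient} applied with $V=W_\fq$ (so that $\ell_V=\ell^\fq$) the map $f$ is $\Lambda$-periodic.

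For the converse, suppose $f$ and $\ell$ are both $\Lambda$-periodic. Since $\ell^\fq\colon\C\to J_\R(X_\fq)$ is a homomorphism of real groups, $\Lambda$-periodicity of $\ell^\fq$ is equivalent to $\ell^\fq(\lambda)=e$ for every $\lambda\in\Lambda$; and periodicity of $\ell=\pi_\fq\circ\ell^\fq$ already forces $\ell^\fq(\lambda)\in K_\R$, the kernel of $\pi_\fq$ restricted to $J_\R(X_\fq)$ described in Remark \ref{rem:Jac(Xq)}. So it suffices to rule out a nontrivial element of $K_\R$ here.

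For this I would make explicit the way $\theta$ intertwines the $K$-actions already implicit in the proof of Theorem \ref{thm:composite}. The section $s$ of \eqref{eq:twistor} depends only on $\pi_\fq(L')$ and so is constant along $K$-orbits, while the canonical trivialisation $\tau$ of \eqref{eq:tau} transforms by precisely the scaling of the summands $\caL_Q$ when the trivialising section of $L_\fq$ is rescaled; hence $\theta$ is $K$-equivariant for the action of $K\simeq\caO_\fq^\times/\C^\times$ on $\CP^{2n+1}=\P H^0(X,\caL_\fq)$ by independent rescalings of the fibres $\caL_Q$, $Q\in\fq$. Since $\ell^\fq$ is a homomorphism and $\Jac(X_\fq)$ is abelian, this gives, for each $\lambda\in\Lambda$,
\[
\hat f(z+\lambda)=\theta\bigl(\ell^\fq(z)\,\ell^\fq(\lambda)\bigr)=\ell^\fq(\lambda)\cdot\hat f(z).
\]
The element $\ell^\fq(\lambda)\in K_\R$ rescales $\caL_{Q_m}$ by some $c_m\in\C^\times$ and $\caL_{\rho Q_m}$ by $\bar c_m$ (the latter forced by $\bar\rho^*$-invariance), so as a complex-linear map on $W/W_\fq$ it commutes with the quaternionic structure $J=\bar\rho^*$; it therefore descends to $\HP^n=\P_\H(W/W_\fq)$, acting there as right multiplication by the diagonal quaternionic projectivity $\operatorname{diag}(c_0,\dots,c_n)$ (determined up to a common real scalar), and $f(z+\lambda)=\ell^\fq(\lambda)\cdot f(z)$.

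Periodicity of $f$ now says this diagonal projectivity fixes $f(z)$ for every $z$. Since $f$ is linearly full none of the homogeneous coordinates $\psi_W(z,Q_m)$ vanishes identically, so each vanishing locus is a proper real-analytic subset of $\C$; away from their union all coordinates are nonzero and the fixed-point condition reduces to $\psi_W(z,Q_0)\,c_0\,\psi_W(z,Q_0)^{-1}=\dots=\psi_W(z,Q_n)\,c_n\,\psi_W(z,Q_n)^{-1}$. Choosing $z$ at which the $0$-th and $m$-th coordinates are both nonzero forces $\Re(c_m)=\Re(c_0)$ and $|c_m|=|c_0|$; varying $z$ should then pin the $c_m$ to a common value which must in addition be real, i.e.\ $\ell^\fq(\lambda)=e$ in $K_\R$, whence $\ell^\fq$ is $\Lambda$-periodic. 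Carrying out this last step cleanly is the genuinely delicate point: one must exclude the degenerate alternative in which $f$ would be forced into a totally complex $\CP^n\subset\HP^n$ (a case which a dimension count of $\hat f(\C)$ inside $\theta(J_\R(X_\fq))$ confines to very small arithmetic genus of $X$, or which one disposes of by reducing to the complex KP situation of Remark \ref{rem:KP}, where there is no $\Gamma_0$-ambiguity to begin with). Everything else is bookkeeping with the $K$-equivariance.
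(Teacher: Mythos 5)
Your argument is, in substance, the paper's own. The sufficiency direction is handled identically via Corollary \ref{cor:sufficient}, and for necessity the paper likewise reduces to showing that the homomorphism $h:\Lambda\to K_\R$, $h(\lambda)=\ell^\fq(\lambda)$, is trivial; the $K$-equivariance of $\theta$ that you reconstruct by hand is exactly what the paper imports from Lemma~1 of \cite{McI01}, namely an injective homomorphism of $K_\R$ into $\P GL(n+1,\H)$ together with the identity $f(z+\lambda)=f(z)h(\lambda)$. So up to and including the statement that $\ell^\fq(\lambda)$ acts on $f(z)$ as the diagonal quaternionic projectivity $\operatorname{diag}(c_0,\dots,c_n)$, you have rederived the paper's proof.

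The step you flag as ``genuinely delicate'' is a genuine gap in your write-up, and it is also the only step the paper compresses: the paper says that by linear fullness one can find $z$ at which all homogeneous coordinates $\psi_W(z,Q_m)$ are non-zero, ``hence $h$ must be the identity''. As you correctly observe, at such a $z$ the fixed-point condition $\psi_m c_m=q\psi_m$ only forces the $c_m$ to be mutually conjugate elements of $\C^\times$, i.e.\ $c_m\in\{c,\bar c\}$ with common real part and modulus; it does not by itself exclude a common non-real unimodular $c$, which is a non-trivial element of $K_\R\simeq(\C^\times)^{n+1}/\R^\times$ whose image in $\P GL(n+1,\H)$ fixes the standard $\CP^n\subset\HP^n$ pointwise. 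Letting $z$ vary, that residual case forces the ratios $\psi_0^{-1}\psi_m$ to take values in $\C\cup j\C$, hence forces $f$ into a totally complex subspace --- exactly the degeneration you name. Your diagnosis is therefore accurate, but your proposal does not dispose of it: ``a dimension count \dots confines to very small arithmetic genus'' and ``reducing to the complex KP situation'' are gestures rather than arguments, and the second in particular does not obviously apply, since the theorem as stated does not assume $W\in Gr_\QKP$. To close the proof you must either show that the totally complex configuration is incompatible with the spectral data producing $f$ (for instance by combining $\caD\psi_m=0$ with $\psi_m=\psi_0 r_m$, $r_m$ complex-valued, to contradict $U_W\neq 0$, or by reducing that case to Example \ref{ex:KPcurves}), or show that no element of the form $(c,\dots,c)$ with $c$ non-real lies in $\ell^\fq(\Lambda)$. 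As it stands, that implication is asserted, not proved --- though in fairness the paper's own one-line conclusion is silently doing the same work.
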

This implies that the monodromy of the corresponding Dirac potential \eqref{eq:monodromy} 
is that of the flat $S^1$-bundle $\caS$ over $\C/\Lambda$ described in Remark \ref{rem:S}.
\begin{proof}
That the $\Lambda$-periodicity of $\ell^\fq$ is sufficient follows at once from Corollary
\ref{cor:sufficient} and Lemma \ref{lem:pair}. Now suppose both $f$ and $\ell$ are $\Lambda$-periodic. Then $\hat f$ is
$\Lambda$-periodic, and $\ell^\fq$ determines a homomorphism
\[
h:\Lambda\to K_\R\subset\ker(\pi_\fq);\quad h(\lambda) = \ell^\fq(\lambda).
\]
In this situation we can use \cite[Lemma 1]{McI01} to deduce that there is an injective
homomorphism of $K_\R$ into $\P GL(n+1,\H)$ which allows us to write 
$f(z+\lambda) = f(z)h(\lambda)$ for all $\lambda\in\Lambda$. Now since $f$ is linearly full
we can, after possibly a congruence, find homogeneous coordinates for $f(z)$ so that these 
are all non-zero at some $z$, hence $h$ must be the identity, whence $\ell^\fq$ is $\Lambda$-periodic.
\end{proof}
\begin{rem}\label{rem:fperiod}
I was unable to find a way to remove the assumption that $\ell$ be $\Lambda$-periodic: it
amounts to the difference between knowing that the Dirac potential has $\Lambda$-monodromy
\eqref{eq:monodromy} (which follows from the periodicity of $f$) and knowing that the full 
QKP operator $L_W$ has $\Lambda$-monodromy (i.e., $[L_W]$ is $\Lambda$-periodic). The former
implies the latter if one knows \emph{a priori} that the QKP Baker function has property (c) of Theorem
\eqref{thm:spectrum}. We will see later in the article that under the
assumption that $\ell$ is $\Lambda$-periodic we do get agreement of the two Baker functions. 
\end{rem}
\begin{rem}\label{rem:g=0}
In the case where $X$ has genus $0$ or $1$ the map $\ell:\C\to\Jac(X)$ is not injective, so the
periodicity condition on $\ell$ does not uniquely determine a lattice $\Lambda\subset\C$. But the
arithmetic genus of $X_\fq$ is at least $2n+1\geq 3$ hence $\ell_\fq$ is always injective,
therefore it uniquely determines the lattice $\Lambda$ when such a lattice exists. 
\end{rem}
Finally, let us note a condition under which the conformal torus in $\HP^1$ can
be immersed in $\R^3$, at least in the case where $X$ is a smooth curve. Recall from the discussion 
at the end of \S\ref{subsec:Dirac} that $f:M\to
S^4$ lies in some $\R^3$ if and only if $\hat E$ is a spin bundle, hence if and only if $\caS$ is a
spin bundle. 
\begin{prop}\label{prop:hyperelliptic}
When $X$ is a smooth curve the bundle $\caS$ above is a spin bundle if the divisor 
$P-\rho P$ has order two $($in which case $X$ is hyperelliptic$)$.
\end{prop}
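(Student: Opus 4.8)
The plan is to use the hyperelliptic involution that this hypothesis forces into existence. First I would note that $P-\rho P$ having order two means $\caO_X(2P)\cong\caO_X(2\rho P)$, so there is a meromorphic function $h$ on $X$ with divisor $2\rho P-2P$; as $\rho$ is fixed-point free we have $P\neq\rho P$, so $h$ is non-constant and realises $X$ as a double cover $h\colon X\to\P^1$ (genus $0$ is excluded, since there $\Jac(X)$ is trivial and $P-\rho P$ could only have order $1$). Thus $X$ is hyperelliptic; let $\iota$ be the deck transformation of $h$. Since $h$ has a double pole at $P$ and a double zero at $\rho P$, these are branch points of $h$, hence fixed points of $\iota$, so $\iota$ descends to an involution $\iota^\fp$ of the nodal curve $X_\fp$. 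I would also record that $\iota$ commutes with $\rho$: this is automatic when the genus is at least $2$, where $\iota$ is the unique hyperelliptic involution, and is a direct check when the genus is $1$.

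Next I would reduce the statement to a claim about $\ell^\fp$. By the discussion at the end of \S\ref{subsec:Dirac} together with \eqref{eq:monodromy}, $\caS$ is a spin bundle exactly when its monodromy $\mu\in\Hom(\Lambda,S^1)$ satisfies $\mu^2=1$; and by Remark \ref{rem:S}, $\caS\cong\ell^*J_\R(X_\fp)$, so $\mu(\lambda)=\ell^\fp(\lambda)$ lies in the $S^1$-fibre $K$ of $J_\R(X_\fp)\to J_\R(X)$ over the identity (I use here that $\ell$ is $\Lambda$-periodic, which holds whenever $\caS$ is defined). It therefore suffices to show that $\ell^\fp(\lambda)^{\otimes 2}$ is the trivial bundle on $X_\fp$ for every $\lambda\in\Lambda$.

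The heart of the argument is that $\iota^\fp$ acts on $\ell^\fp$ by inversion. Replacing $\zeta$ by $\zeta(1+O(\zeta^{-1}))$ changes the transition function $e^{z\zeta}$ of $\ell^\fp(z)$ only by a unit near $P$ and so does not change $\ell^\fp$; since $d\iota_P=-1$, I may therefore choose the local parameter so that $\zeta\circ\iota=-\zeta$. Then $\iota^*$ sends the transition function $e^{z\zeta}$ of $\ell^\fp(z)$ over $\Delta_P$ to $e^{-z\zeta}$ and (using $\rho\iota=\iota\rho$) sends $\overline{\rho^*e^{z\zeta}}$ over $\rho\Delta_P$ to $\overline{\rho^*e^{-z\zeta}}$; since moreover $\iota^\fp$ fixes the node of $X_\fp$ and preserves the canonical trivialisations of the constant bundles glued together to build $\ell^\fp(z)$, this gives $\iota^{\fp*}\ell^\fp(z)=\ell^\fp(-z)=\ell^\fp(z)^{-1}$, the last equality because $\ell^\fp\colon\C\to\Jac(X_\fp)$ is a homomorphism. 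On the other hand $\iota^{\fp*}$ restricts to the identity on $K$: a point of $K$ is $\caO_X$ together with an identification $c\in S^1$ of its fibres over $P$ and $\rho P$, and since $\iota$ fixes $P$ and $\rho P$ and acts trivially on the fibres of $\caO_X$ there, $c$ is unchanged. Combining these, for $\lambda\in\Lambda$ we have $\ell^\fp(\lambda)\in K$, hence $\ell^\fp(\lambda)=\iota^{\fp*}\ell^\fp(\lambda)=\ell^\fp(\lambda)^{-1}$, so $\ell^\fp(\lambda)^{\otimes 2}$ is trivial and $\mu^2=1$; thus $\caS$ is a spin bundle.

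I expect the main obstacle to be the node bookkeeping in the identity $\iota^{\fp*}\ell^\fp(z)=\ell^\fp(-z)$: one has to be sure that the isomorphism produced by $\zeta\circ\iota=-\zeta$ on $X$ really does respect the gluing at the node of $X_\fp$ and does not pick up an extra factor in $K$ — which is precisely why it matters to arrange $\zeta\circ\iota=-\zeta$ exactly, not merely up to a unit — and to check carefully that $\iota$ commutes with $\rho$ and fixes both $P$ and $\rho P$.
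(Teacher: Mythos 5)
Your argument is a genuinely different one from the paper's. The paper's proof is essentially two citations to Serre: under the self-duality of $\Jac(X)$ given by the Abel map, the extension $1\to\C^\times\to\Jac(X_\fp)\to\Jac(X)\to 1$ corresponds to the point $\caO_X(P-\rho P)\in\Jac(X)$, so the square of the associated flat line bundle corresponds to $\caO_X(2P-2\rho P)$, which is trivial by hypothesis; pulling back along $\ell$ gives the result. Your route instead uses the hyperelliptic involution $\iota$ that the hypothesis produces, and most of it is correct: $\iota$ does fix $P$ and $\rho P$, commutes with $\rho$, descends to $X_\fp$, acts as the identity on the kernel $K$ of $\Jac(X_\fp)\to\Jac(X)$, and (once one has $\zeta\circ\iota=-\zeta$) sends $\ell^\fp(z)$ to $\ell^\fp(-z)=\ell^\fp(z)^{-1}$; combined with $\ell^\fp(\Lambda)\subset K$ this forces $\ell^\fp(\lambda)^2=1$. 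This is an attractive alternative, and it correctly isolates where the geometric hypothesis enters.

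The gap is exactly at the step you yourself flag as delicate, and it is not merely bookkeeping: the claim that replacing $\zeta$ by $\zeta(1+O(\zeta^{-1}))$ ``does not change $\ell^\fp$'' is false. The bundle $\ell(z)$ on $X$ is indeed unchanged, but the point of $\Jac(X_\fp)$ also records the identification of the fibres over $P$ and $\rho P$, and the only trivialisation available at those points is $\tau_P$, the one over $\Delta_P\cup\rho\Delta_P$. Writing $\zeta'=\zeta\,u(\zeta^{-1})$ with $u=1+c\,\zeta^{-1}+\ldots$, the transition function changes by the unit $v=e^{z(\zeta'-\zeta)}$, whose value at $P$ is $e^{zc}$ and whose $\rho$-conjugate has value $e^{\bar z\bar c}$ at $\rho P$; the fibre identification therefore changes by $\overline{v(P)}/v(P)=e^{-2i\Im(zc)}$, a nontrivial element of $K$ unless $c=0$. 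So $\ell^\fp$ depends on the $2$-jet of $\zeta^{-1}$, not just its $1$-jet, and the normalisation $\zeta\circ\iota=-\zeta$ is not free: it replaces $\zeta$ by $\zeta+a_2/2+O(\zeta^{-1})$ where $\zeta\circ\iota=-\zeta-a_2-O(\zeta^{-1})$, and hence changes $\mu(\lambda)=\ell^\fp(\lambda)$ by $e^{-i\Im(\lambda a_2)}$ --- a factor which in general destroys or creates the property $\mu^2=1$. As written, your argument therefore establishes $\mu^2=1$ only for the monodromy attached to the anti-invariant parameter, and you still need to show either that $a_2$ can be taken to be zero for the given data, or that the intrinsic unitary monodromy of $\hat E$ (the thing that decides whether $\caS$ is spin) is the one computed by the anti-invariant choice; the compensating change in the (real) multiplier of the Baker function at $\fq$ is where such an argument would have to live. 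The paper's proof avoids this issue entirely by never choosing a horizontal lift $\ell^\fp$: it works with the holomorphic line bundle classified by the extension and its unique unitary flat structure.
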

\begin{proof}
One knows (e.g., from \cite{Ser}) that pulling back line bundles along the Abel map gives 
an isomorphism between $\Jac(X)$ and its dual
$\Jac(X)^*$ (the moduli space of flat line bundles on $\Jac(X)$). One also knows from 
\cite[VII]{Ser} that
the flat line bundle over $\Jac(X)$ with principal bundle $\Jac(X_\fp)$ is pulled back to 
$\caO_X(P-\rho P)$.  Hence $\caS$ is a spin bundle whenever $\caO_X(2P-2\rho
P)$ is trivial, i.e., when $P-\rho P$ is a divisor of order two.
\end{proof}
It can be shown that this condition obliges the ``even'' flows of the QKP hierarchy to be
trivial, in which case we obtain the modified Novikov-Veselov hierarchy, as one expects
from \cite{Tai04}.

\section{Darboux transformations.}

We will follow the notion of Darboux transformations introduced by Bohle et
al.\ in \cite{BohLPP}. It generalises the classical notion of a Darboux 
transform
between isothermic surfaces in $S^3$, in which both surfaces envelope the same
sphere congruence.

Let us recall first, from \cite{BurFLPP}, that we can identify the set of all
oriented round 2-spheres in $S^4$ with the set 
\[
\caZ=\{S\in\End_\H(\H^2); S^2=-I\}.
\]
This identification gives to each $S\in\caZ$ the 2-sphere 
$\{L\in\HP^1;SL=L\}$, which we will also denote by $S$. The orientation is
given by the complex structure each 2-sphere inherits from $S$.
Given a Riemann surface, $M$, a sphere congruence is a map $S:M\to\caZ$. Now
let $f:M\to S^4$ be a conformal map. We say $f$ envelopes
a sphere congruence $S:M\to\caZ$ if $f(p)\in S(p)$ and the
oriented tangent plane of $f(M)$ at $f(p)$ agrees with that of $S(p)$. In 
terms of the line subbundle $L\in \HH^2$ corresponding to $f$,
these two conditions can be expressed as
\[
SL=L,\quad\text{and}\quad *\delta f = S\delta f= \delta fS.
\]
It is a classical result that 
two conformal maps which envelope the same sphere congruence
must both be isothermic (see \cite[Cor.\ 67, p.78]{Boh} for a modern proof), 
therefore Bohle et al.\ \cite{BohLPP} (see also \cite{Boh}) relax the condition
slightly to achieve a broader class of transformations. We will say a
conformal map $f:M\to S^4$ left-envelopes a sphere congruence $S:M\to\caZ$ if
$f(p)\in S(p)$ and their oriented tangent planes agree up to action of
$SU_2$ on $T_{f(p)}S^4$ as the left component in the epimorphism
$SU_2\times SU_2\to SO_4$ (the notion of a right-envelope is defined
similarly). In terms of the line bundle $L$ the property of being a
left-envelope can be expressed as
\[
SL=L,\quad\text{and}\quad *\delta f =S \delta f.
\]
\begin{defn}[\cite{BohLPP}]
Let $f:M\to S^4$ be a conformal map of a Riemann surface $M$. Another
conformal map $f^\sharp:M\to S^4$ is a Darboux transform of $f$ if $f(p)\neq
f^\sharp(p)$ for all $p\in M$ and there
exists a sphere congruence $S:M\to\caZ$ which is enveloped by $f$ and
left-enveloped by $f^\sharp$. In terms of the line bundles $L,L^\sharp$ this
means
\begin{equation}
\HH^2=L\oplus L^\sharp,\ SL=L,\ SL^\sharp=L^\sharp,\ 
*\delta f = S\delta f = \delta f S,\ \text{and}\ *\delta f^\sharp = S \delta f^\sharp .
\end{equation}
\end{defn}
Strictly speaking, we want to allow \emph{singular} Darboux transformations of a torus: 
those for which $L\cap L^\sharp$ is trivial except at finitely many points (see \cite{BohLPP}).
We want to understand what Darboux transformations look like for maps 
arising from a pair $(W,V)$ of the type above. First we invoke a simple result
from \cite{BohLPP} which gives a neat characterization for Darboux transforms.
\begin{lem}[\cite{BohLPP}]
Let $f,f^\sharp:M\to \HP^1$, with corresponding line bundles $L,L^\sharp$,
be conformal immersions so that $\HH^2 = L\oplus
L^\sharp$. Then $f^\sharp$ is a Darboux transform for $f$ if and only if
$*\delta f^\sharp = J\delta f^\sharp$ where $*\delta f= \delta f J$ and we
identify $\HH^2/L^\sharp$ with $L$ using projection along the
splitting.
\end{lem}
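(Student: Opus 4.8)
The plan is to unpack the definition of a Darboux transform — the existence of a sphere congruence $S:M\to\caZ$ with $SL=L$, $SL^\sharp=L^\sharp$, $*\delta f = S\delta f = \delta f S$ and $*\delta f^\sharp = S\delta f^\sharp$ — and to show that, once the splitting $\HH^2 = L\oplus L^\sharp$ is fixed, such an $S$ exists precisely when $*\delta f^\sharp = J\delta f^\sharp$. Two elementary observations will be used throughout. First, since $f$ is an immersion and $\Hom_\H(L,\HH^2/L)$ is one-dimensional over $\H$, for every nonzero $X\in T_pM$ the evaluation $\delta f(X):L_p\to(\HH^2/L)_p$ is an $\H$-linear isomorphism: if it vanished then so would $\delta f(J_M X) = \delta f(X)\circ J$, forcing $\delta f_p=0$. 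Second, because $SL^\sharp=L^\sharp$, the endomorphism $S$ descends to $\HH^2/L^\sharp$ and the isomorphism $L\hookrightarrow\HH^2\to\HH^2/L^\sharp$ coming from the splitting intertwines $S|_L$ with this descent; so, under the identification $\HH^2/L^\sharp\simeq L$ used in the statement, the action of $S$ on $\HH^2/L^\sharp$ is identified with $S|_L$ (and symmetrically for $\HH^2/L\simeq L^\sharp$).

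For the forward implication, suppose $f^\sharp$ is a Darboux transform with congruence $S$. From $\delta f\circ(S|_L) = \delta f S = *\delta f = \delta f\circ J$ and the pointwise invertibility of $\delta f$ I get $S|_L = J$. By the second observation the action of $S$ on $\HH^2/L^\sharp\simeq L$ is then $J$, so the left-enveloping condition $*\delta f^\sharp = S\delta f^\sharp$ reads exactly $*\delta f^\sharp = J\delta f^\sharp$.

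For the converse I build $S$ out of $J$. Put $S|_L := J$, and on $\HH^2/L\simeq L^\sharp$ put $S$ equal to the left normal $N$ of $f$, defined by $N := \delta f(X)\circ J\circ\delta f(X)^{-1}$ for an arbitrary nonzero local vector field $X$. First I check this is legitimate: $N$ is $\H$-linear, $N^2=-I$, and it is independent of $X$, because $\delta f(X') = \delta f(X)\circ(a+bJ)$ when $X'=aX+bJ_M X$ and $a+bJ$ commutes with $J$, so the conjugation is unchanged; smoothness is then immediate. Now $S:=J\oplus N$ on $L\oplus L^\sharp$ lies in $\End_\H(\HH^2)$ with $S^2=-I$, hence defines a sphere congruence $S:M\to\caZ$, and $SL=L$, $SL^\sharp=L^\sharp$ by construction. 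From $*\delta f=\delta f\circ J$ I get $\delta f S = *\delta f$, and from the definition of $N$ I get $S\delta f = N\circ\delta f = \delta f\circ J = *\delta f$; thus $f$ envelopes $S$. Finally, by the second observation the action of $S$ on $\HH^2/L^\sharp\simeq L$ is again $J$, so the hypothesis $*\delta f^\sharp = J\delta f^\sharp$ says exactly that $*\delta f^\sharp = S\delta f^\sharp$, i.e.\ $f^\sharp$ left-envelopes $S$. Hence $f^\sharp$ is a Darboux transform of $f$.

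The one point I expect to need real care is the construction of the left normal $N$ in the converse — verifying that $\delta f(X)\circ J\circ\delta f(X)^{-1}$ is genuinely independent of $X$ and assembles into a smooth $\caZ$-valued map — together with bookkeeping of the paper's nonstandard Hodge convention ($*dz=i\,dz$) when translating the enveloping conditions, which are phrased as post- or pre-composition with $S$ on the quotients $\HH^2/L$ and $\HH^2/L^\sharp$, into statements about $S$ on the complementary subbundles $L^\sharp$ and $L$. That is where sign errors would most easily slip in, but it should not be a genuine obstacle.
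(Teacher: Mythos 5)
Your proof is correct and follows exactly the route the paper indicates: the paper cites the lemma from \cite{BohLPP} without a detailed proof, but the remark immediately following it specifies the sphere congruence as $S|_L=J$ and $S|_{L^\sharp}=\tilde J$ with $*\delta f=\tilde J\delta f$, which is precisely your left normal $N$. Your verification of the well-definedness of $N$ and the identification of the induced action of $S$ on $\HH^2/L^\sharp$ with $S|_L$ fills in the details consistently with that sketch.
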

It is easy to check that the sphere congruence which is enveloped by $f$ and
left-enveloped by $f^\sharp$ is given by $S|_L=J$ and $S|_{L^\sharp} = \tilde
J$ where $*\delta f = \tilde J \delta f$, again using the splitting to
identify $L^\sharp$ with $\HH^2/L$.
\begin{lem}\label{lem:DT}
Let $f:M\to\P(W/V)\simeq\HP^1$ be the conformal map defined by equation
\eqref{eq:f}. Given $\lambda\in\C$, $0<|\lambda|<1$, define $W_\lambda = W(1-\lambda\zeta)$, 
$V_\lambda = V(1-\lambda\zeta)$ and let $f^\lambda:M'\to\HP^1$ be the map corresponding to
the pair $(W_\lambda,V_\lambda)$. Then $f^\lambda$ is a Darboux transform of $f$
over their common domain $M\cap M'$.
\end{lem}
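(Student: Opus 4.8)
The plan is to verify the two conditions in the criterion of Bohle et al.\ recalled just above: that $\HH^2=L\oplus L^\lambda$ over $M\cap M'$, and that $*\delta f^\lambda=J\,\delta f^\lambda$, where $J$ is the complex structure on $L$ coming from $f$ (characterised by $*\delta f=\delta f\,J$) and $\HH^2/L^\lambda$ is identified with $L$ by projection along the splitting. First one sets up the comparison. Since $0<|\lambda|<1$, the function $\gamma_\lambda:=1-\lambda\zeta$ is non-vanishing of winding number zero on $S^1$, extends holomorphically to $|\zeta|<1$, and takes the value $1$ at $\zeta=0$; hence $\gamma_\lambda\in\Gamma_+$, with some coordinate $\bt_\lambda\in\caG_+$. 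Right multiplication by $\gamma_\lambda^{-1}$ is an $\H$-linear isomorphism $W_\lambda\to W$ carrying $V_\lambda$ onto $V$, so it identifies $\P(W_\lambda/V_\lambda)$ with $\P(W/V)=\HP^1$; using this identification throughout, $f^\lambda(z)=[\psi_{W_\lambda}(z)\gamma_\lambda^{-1}+V]$. The flow relation \eqref{eq:flow1} gives $\psi_{W_\lambda}(z)\gamma_\lambda^{-1}=\psi_W(z-\bt_\lambda)$, so over $\C$ the map $f^\lambda$ has Baker lift $\tilde f^\lambda=\psi_W(\,\cdot\,-\bt_\lambda)+V$, to be compared with the lift $\tilde f=\psi_W+V$ of $f$ (by Theorem \ref{thm:bigcell} both lifts are defined on dense open subsets, so $M\cap M'$ is dense open).

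The direct sum condition is straightforward: $L_z\cap L^\lambda_z=0$ precisely when $\psi_W(z)$ and $\psi_W(z-\bt_\lambda)$ are $\H$-independent modulo $V$, and since $\psi_W$ is real-analytic in its arguments this can fail only on a proper analytic subset of $M\cap M'$ (the alternative would force $f\equiv f^\lambda$, which is excluded as $f^\sharp$ must be distinct from $f$); on that exceptional set $f(p)=f^\lambda(p)$, so at worst $f^\lambda$ is a \emph{singular} Darboux transform in the sense allowed above, and away from it $\HH^2=L\oplus L^\lambda$. The substance lies in the identity $*\delta f^\lambda=J\,\delta f^\lambda$. Because $\caD\psi_W=0$ holds at \emph{every} point of $\caG_+$ (Theorem \ref{thm:f}), evaluating at the shifted slice yields $\partial_{\bar z}\tilde f^\lambda=-U_W(z-\bt_\lambda)\,\tilde f^\lambda$, a section of $L^\lambda$; hence $\delta f^\lambda$ is of type $(1,0)$, with $\delta f^\lambda(\tilde f^\lambda)=\pi_L(\partial_z\tilde f^\lambda)\,dz$ for $\pi_L$ the projection along the splitting. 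The same computation for $f$ shows $\delta f$ is of type $(1,0)$ as well, and together with the description of $J$ from the proof of Theorem \ref{thm:f} (whose $+i$-eigenbundle $\hat L$ is $\C\tilde f$) the criterion collapses to the single assertion $\pi_L(\partial_z\tilde f^\lambda)\in\C\,\tilde f$, i.e.\ $\partial_z\psi_W(z-\bt_\lambda)\in\C\,\psi_W(z)+\H\,\psi_W(z-\bt_\lambda)$ modulo $V$.

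To prove this last assertion I would write $\partial_z\psi_W(z-\bt_\lambda)=\zeta\,\psi_W(z-\bt_\lambda)+\eta$, with $\eta$ of order $O(\zeta^{-1})$ times the exponential (from differentiating $e^{z\zeta}$), and then use that $\gamma_\lambda$ is \emph{linear} in $\zeta$ with its only zero at $\zeta=1/\lambda$, which lies in the region $|\zeta|>1$ where the Baker functions are holomorphic: this forces $\psi_{W_\lambda}(z)=\psi_W(z-\bt_\lambda)\gamma_\lambda$ to vanish at $\zeta=1/\lambda$, which is exactly the relation needed to rewrite $\zeta\,\psi_W(z-\bt_\lambda)$, modulo $L^\lambda$ and $V$, in terms of $\psi_W(z)$. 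Carrying out this algebraic step cleanly — and, along the way, keeping exact track of the left/right $\H$-actions and of the paper's sign convention for $*$, so that ``type $(1,0)$'' really corresponds to the $+i$-eigenbundle of $J$ — is the step I expect to be the main obstacle; the rest is bookkeeping with the flow relations \eqref{eq:flow1} and the Dirac equation.
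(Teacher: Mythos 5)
Your overall strategy is the one the paper uses: verify the Bohle et al.\ criterion by showing that $\delta f^\lambda(\tilde f^\lambda)$ is a $dz$-form whose value, modulo $L^\lambda$, is a \emph{complex} multiple of $\tilde f$, the complexity of the coefficient being what makes $*\delta f^\lambda=J\delta f^\lambda$ hold. Your reduction to the single assertion $\pi_L(\partial_z\tilde f^\lambda)\in\C\tilde f$ is correct, and your treatment of the splitting $\HH^2=L\oplus L^\lambda$ (generic transversality, singular Darboux transforms on the exceptional set) is if anything more explicit than the paper's. But the decisive identity is exactly the step you defer, and the mechanism you propose for it does not work. The condition ``$\psi_{W_\lambda}=\psi_W(\,\cdot-\bt_\lambda)\gamma_\lambda$ vanishes at $\zeta=1/\lambda$'' is vacuous for this purpose: \emph{every} element of $W_\lambda=W\gamma_\lambda$ that extends holomorphically past the unit circle vanishes there, because $\gamma_\lambda$ does. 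All three terms you want to relate ($\partial_z\tilde f^\lambda$, $\tilde f$ suitably transported, and $\tilde f^\lambda$) share this vanishing, so it cannot isolate the coefficient $-1/\lambda$ of $\tilde f$, and it needlessly invokes analytic continuation of the Baker function off $S^1$.

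What is actually needed is a transversality (big-cell) argument. The paper works inside $W_\lambda/V_\lambda$, lifting $f$ by $\psi:=\psi_W\cdot(1-\lambda\zeta)$ and $f^\lambda$ by $\psi^\lambda:=\psi_{W_\lambda}$ (this avoids your time-shift $\bt_\lambda$ altogether, though the two set-ups are equivalent under right multiplication by $\gamma_\lambda$). Comparing the expansions
$\psi=(-\lambda\zeta+(1-a\lambda)+O(\zeta^{-1}))e^{z\zeta}$ and $\psi^\lambda=(1+O(\zeta^{-1}))e^{z\zeta}$, one chooses $b:M\to\H$ to kill the constant term, so that
$\bigl(\psi^\lambda_z+\tfrac{1}{\lambda}\psi-b\psi^\lambda\bigr)e^{-z\zeta}=O(\zeta^{-1})$,
i.e.\ the left-hand side lies in $H_-$. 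Since all three of $\psi^\lambda_z$, $\psi$ and $\psi^\lambda$ take values in $W_\lambda$, and $W_\lambda e^{-z\zeta}$ is transverse to $H_-$ for almost every $z$ by Theorem 3.3, this combination vanishes identically, giving $\tilde f^\lambda_z=b\tilde f^\lambda-\tfrac{1}{\lambda}\tilde f$ and hence $\delta f^\lambda(\tilde f^\lambda)=-dz\,\tfrac{1}{\lambda}\tilde f$ with $\tfrac{1}{\lambda}\in\C$ as required. This use of the big cell is the missing ingredient in your argument; without it (or some substitute), the claim that the $O(\zeta^{-1})$ remainder actually vanishes has no justification.
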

\begin{proof}
We assume, without loss of generality, that $M=M'$ is an open domain in $\C$. 
Let $\caV = W_\lambda/V_\lambda$ and let
$f,f^\lambda:M\to\P\caV$ denote the conformal maps with lifts
\[
\tilde f = \psi_W(1-\lambda\zeta) + V_\lambda,\quad
\tilde f^\lambda = \psi_{W_\lambda} + V_\lambda,
\]
and line subbundles $L,L^\lambda\subset M\times\caV$. 
From the proof of Theorem \ref{thm:f} we know that $*\delta f = \delta f J$ for
$J\tilde f = i\tilde f$. 
I claim that there is a function $b:M\to\H$ for which
\begin{equation}\label{eq:tildef}
\tilde f^\lambda_z = b\tilde f^\lambda -\frac{1}{\lambda} \tilde f.
\end{equation}
Therefore
\[
\delta f^\lambda (\tilde f^\lambda) = -dz\frac{1}{\lambda} \tilde f.
\]
It follows that, since $\lambda$ is complex,
$*\delta f^\lambda = J\delta f^\lambda$. In light of the previous lemma, this
proves the theorem. 

It remains to verify equation \eqref{eq:tildef}. This is a direct result
of an identity for Baker functions. Set
\[
\psi = \psi_W(1-\lambda\zeta),\quad \psi^\lambda = \psi_{W_\lambda}.
\]
These have respective Fourier series expansions
\begin{eqnarray*}
\psi = (-\lambda\zeta + (1-a\lambda) +\cdots)e^{z\zeta}\\
\psi^\lambda = (1 + a^\lambda\zeta^{-1} +\cdots)e^{z\zeta}.
\end{eqnarray*}
Therefore there is an $\H$-valued function $b$ for which
\[
(\psi^\lambda_z +\frac{1}{\lambda}\psi -b\psi^\lambda)e^{-z\zeta}\equiv 0\bmod
O(\zeta^{-1}).
\]
Since $W_\lambda e^{-z\zeta}$ is transverse to $H_-$ almost 
everywhere (Theorem \ref{thm:bigcell}), the right hand side must be identically zero.
Equation \eqref{eq:tildef} follows.
\end{proof}
Notice that $(1-\lambda\zeta)\in\Gamma_+$, since its only zero is at
$\lambda^{-1}$. It is clear that the proof above still works if we replace $(1-\lambda\zeta)$ by
any representative in the coset $(1-\lambda\zeta)\Gamma_-\in\Gamma/\Gamma_-$. It follows that
in the case where $W_\fq\subset V\subset W$ for some $\rho$-invariant divisor $\fq\subset
X\setminus\{P,\rho P\}$ this Darboux transform acts on the spectral data
by fixing everything except the pair $\caL,\varphi$, which it transforms by
\begin{equation}\label{eq:DT}
(\caL,\varphi)\mapsto (\caL^Q,\varphi^Q),\quad \caL^Q=\caL(Q+\rho Q -P-\rho P),
\end{equation}
where $\zeta(Q)=\lambda^{-1}$ and $\varphi^Q$ is a local trivialising section of $\caL^Q$ over
$\Delta_P$.  In fact, provided $Q$ is not in the support of
$\fq$, the transform \eqref{eq:DT} always yields a Darboux transform.
\begin{thm}\label{thm:darboux}
Let $f:\C/\Lambda\to\HP^1$ be a quaternionic holomorphic
torus corresponding to the spectral data $(X,\rho,P,\caL,\fq)$ and a choice of well-positioned two
dimensional linear system $H\subset (W/W_\fq)^*$. Assume that $\ell:\C\to\Jac(X)$
is $\Lambda$-periodic.  Let $X_\fq^\sm$ be the variety of smooth points in $X_\fq$.
Then for every $Q\in X_\fq^\sm\setminus\{P,\rho P\}$ there is a Darboux transform 
$f^Q$ of $f$ arising from the spectral data obtained from the transformation \eqref{eq:DT}.
\end{thm}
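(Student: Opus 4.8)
The plan is to bootstrap from Lemma~\ref{lem:DT}. That lemma, together with the identification of spectral data in the paragraph following it, already proves the theorem for $Q$ in the parameter disc, i.e.\ when $\zeta(Q)=\lambda^{-1}$ with $0<|\lambda|<1$ (equivalently $Q\in\Delta_P\setminus\{P\}$); so the work is to propagate the conclusion to an arbitrary $Q\in X_\fq^\sm\setminus\{P,\rho P\}$. First I would settle the periodicity question, which is automatic, and then continue the Darboux relation in the parameter $Q$.

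For periodicity: the transformation \eqref{eq:DT} replaces $\caL$ only by $\caL\otimes\caO_X(Q+\rho Q-P-\rho P)$, a twist by a \emph{fixed} degree-zero line bundle, and leaves $X,\rho,P,\zeta$ and the divisor $\fq$ untouched. Hence the real homomorphisms $\ell\colon\C\to\Jac(X)$ and $\ell^\fq$ attached to the transformed data coincide with those of the original. Since $f$ is a torus and $\ell$ is $\Lambda$-periodic by hypothesis, Theorem~\ref{thm:fperiodic} shows $\ell^\fq$ is $\Lambda$-periodic, and Corollary~\ref{cor:sufficient}, applied to $(X,\rho,P,\caL^Q,\fq)$ with the two-dimensional linear system transported from $H$ along the canonical isomorphism $W/W_\fq\simeq W^Q/W^Q_\fq$ — multiplication by the canonical rational section of $\caO_X(Q+\rho Q-P-\rho P)$, holomorphic and nowhere zero near $\fq$ because its divisor sits at $P,\rho P,Q,\rho Q\notin\fq$ — gives that $f^Q$ descends to a (possibly singular) conformal map $\C/\Lambda\to\HP^1$. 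I would also check, en route, that $\caL^Q$ is admissible spectral data: it has degree $g+1$, it satisfies $\overline{\rho^*\caL^Q}\simeq\caL^Q$ because $Q+\rho Q-P-\rho P$ is $\rho$-real, and non-speciality of $\caL^Q(-P-\rho P)$ is either automatic or, by Remark~\ref{rem:torsionfree}, inessential.

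For the Darboux relation I would work on the universal cover $\C$ in coordinates on $W/W_\fq\simeq W^Q/W^Q_\fq$, with lifts $\tilde f,\tilde f^Q\colon\C\to\H^2$ of $f,f^Q$. The content of Lemma~\ref{lem:DT} is the identity $\partial\tilde f^Q/\partial z=b\,\tilde f^Q-c\,\tilde f$ at each point where $\tilde f,\tilde f^Q$ are independent (with $b,c$ determined by a $2\times2$ linear system), together with the crucial fact that $c$ is \emph{complex}; this, with $\partial\tilde f^Q/\partial\bar z$ a multiple of $\tilde f^Q$, is exactly what forces $*\delta f^Q=J\delta f^Q$ where $*\delta f=\delta f\circ J$ and $\HH^2/L^Q$ is identified with $L$ by the splitting. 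Now the construction of $f^Q$ from $Q$ is algebraic — $\{\caL^Q\}$ is an algebraic family of line bundles over $X_\fq^\sm$ and the twistor lift is obtained from a relative form of Theorem~\ref{thm:composite} — so for fixed $z$ the $\tilde f$-component of $\partial\tilde f^Q/\partial z$ depends rationally on $Q$, and ``this component lies in $\C\subset\H$'' is a Zariski-closed condition on $Q$. By Lemma~\ref{lem:DT} it holds on $\Delta_P\setminus\{P\}$, a non-empty Zariski-open subset of the irreducible component of $X$ containing $P$, hence on all of that component; the symmetry $\caL^Q=\caL^{\rho Q}$ of \eqref{eq:DT} (so $f^Q=f^{\rho Q}$) then covers the $\rho$-image, and continuity in $z$ extends it to all $(z,Q)$ in the common domain. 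Finally $L$ and $L^Q$ are $\Lambda$-periodic and $f^Q\not\equiv f$ (since $Q\notin\fq$, the two global Baker functions, hence their evaluations at $\fq$, are not proportional for generic $z$), so $\HH^2=L\oplus L^Q$ off a finite subset of $\C/\Lambda$; by the characterisation recalled before Lemma~\ref{lem:DT}, $f^Q$ is then a (possibly singular) Darboux transform of the torus $f$, enveloping the sphere congruence $S$ with $S|_L=J$, $S|_{L^Q}=\tilde J$ and $*\delta f=\tilde J\delta f$.

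The hardest part, as I see it, is the propagation step, for two linked reasons. First, ``$f^Q$ is a Darboux transform of $f$'' is not a Zariski-closed condition in $Q$: it carries the open-type requirement that $f$ and $f^Q$ be distinct, so one must separate the genuinely closed part (that $c$ be complex, equivalently $*\delta f^Q=J\delta f^Q$) from the transversality $\HH^2=L\oplus L^Q$, which needs the independent input $f^Q\not\equiv f$; this is precisely where $Q\notin\fq$ and $Q\notin\{P,\rho P\}$ enter, the construction degenerating otherwise. Secondly, one has to make the relative version of Theorem~\ref{thm:composite} — an algebraic family over $X_\fq^\sm$ of spectral data together with transported linear systems — precise enough to license the rational dependence on $Q$. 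An alternative that sidesteps irreducibility of $X$ is to generalise \eqref{eq:tildef} head-on: replace $(1-\lambda\zeta)$ — to be read as the $\zeta$-pull-back of a meromorphic function on $X$ with a simple zero at $Q\in\Delta_P$ — by an honest rational function on $X$ with a simple zero at $Q$ and a pole at $P$, and verify that the resulting ``eigenvalue'' (formerly $1/\lambda\in\C$) remains complex; it does, being governed by the analytic germ of the Baker function at $P$, where the local parameter $\zeta$ and the asymptotics $e^{z\zeta}$ live in $\C\subset\H$.
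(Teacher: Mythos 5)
Your reduction to Lemma~\ref{lem:DT} and your treatment of periodicity (via Theorem~\ref{thm:fperiodic} and Corollary~\ref{cor:sufficient}, with the linear system transported across $W/W_\fq\simeq W^Q/W^Q_\fq$) match the paper. But the step you single out as ``the hardest part'' --- propagating the Darboux relation from $Q\in\Delta_P$ to arbitrary $Q$ --- is dispatched by the paper in one line, by a device you did not consider: since $f$ is determined up to congruence by $(X,\rho,P,\caL,\fq)$ alone (Remark~\ref{rem:change_of_triv} and the discussion at the start of \S\ref{subsec:finite}: the choices of $\Delta_P$, $\zeta$ and $\varphi$ affect $f$ only by congruence and a rescaling of $z$), one may simply \emph{re-choose the coordinate disc} $\Delta_P$ centred at $P$ so that it contains $Q$ and avoids $\fq$. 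With that disc $\zeta(Q)=\lambda^{-1}$ with $0<|\lambda|<1$, and Lemma~\ref{lem:DT} applies verbatim.

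Both of your substitute propagation mechanisms have genuine gaps. In variant (i) the identity-theorem argument needs the coefficient $c(z,Q)$ to depend holomorphically on $Q$ for fixed $z$, which presupposes a choice of lifts $\tilde f^Q$ holomorphic in $Q$ and compatible with the $z$-dependence --- i.e.\ a relative, family version of the global Baker function over $\{\caL^Q\}$ that you acknowledge you have not constructed; note also that $\Delta_P\setminus\{P\}$ is an analytic open set, not a Zariski open set, so what you are really invoking is analytic continuation on the connected curve $X^\sm$, which only sharpens the need for genuine holomorphic dependence. Variant (ii) as literally stated fails: there is in general \emph{no} rational function on $X$ whose only relevant zeros and poles are a simple zero at $Q$ (and $\rho Q$) and a simple pole at $P$ (and $\rho P$), because the divisor class of $Q+\rho Q-P-\rho P$ is a non-trivial point of $\Jac(X)$; a global rational function would carry extra zeros and poles that wreck the transformation \eqref{eq:DT}, while a function with exactly that divisor exists only on a disc containing both $P$ and $Q$ --- and using such a disc is precisely the paper's move. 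So variant (ii), correctly repaired, collapses into the paper's proof, and you should replace your propagation step by the observation that $\Delta_P$ may be enlarged.
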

The linear system $H$ is spanned by two linear forms each of which is a $\H$-linear
combination of evaluation maps $\ev_{Q_0},\ldots,\ev_{Q_n}$. Therefore $f^Q$ corresponds to the
linear system $H^Q\subset (W/W_\fq)^*$ determined by the same combination of evaluations.
Since both $\ell$ and $f$ are assumed $\Lambda$-periodic Theorem \ref{thm:fperiodic} ensures that
$\ell^\fq$ is $\Lambda$-periodic, and therefore the $\Gamma_1$-orbit of $\caL^Q$ in $\Pic(X_\fq)$ is 
isomorphic to $\C/\Lambda$.
The proof now follows from Lemma \ref{lem:DT} by choosing a coordinate
disc on $X$ centred at $P$ and containing $Q$ but not any points in $\fq$.
The singularities of such a Darboux transform  correspond to points where the $\Gamma_1$-orbit
of $W^Q$, the point in $Gr_\QKP$ corresponding to the transformed spectral data, leaves the big cell. 

\section{Spectral curves.}\label{sec:spectral}

\subsection{The QKP spectral curve.}\label{subsec:QKPcurve}
In the construction of $W\in Gr_\QKP$ from spectral data we
use the map $w$ in \eqref{eq:w}. When this is restricted to the algebraic sections of $\caL$ over
$X_0$ (i.e., those which have only poles at $P,\rho P$) we obtain an open dense subspace
$W^\alg\subset W$. The elements of $W^\alg$ are algebraic in the sense that their projections onto
$H_+$ are polynomial in $\zeta$. Now if $\caA$ denotes the coordinate ring of $X\setminus\{P,\rho
P\}$ its real subalgebra
\[
\caA^\rho = \{h\in\caA;\overline{\rho^*h}=h\},
\]
acts on $W^\alg$ by right multiplication, since $w(h\sigma) = w(\sigma)h$ (where on the right hand
side we restrict $h$ to the circle $C_P$). Therefore
\[
\caA^\rho\hookrightarrow \caA_W = \{h\in C^\omega(S^1,\C);W^\alg h\subset W^\alg\}.
\]
In fact it is easily shown that this inclusion is onto, so that $\caA^\rho\simeq\caA_W$. Thus we
recover $X\setminus\{P,\rho P\}$ as $\Spec(\caA^\C_W)$, where $\caA^\C_W$ is the complex subalgebra
of $C^\omega(S^1,\C)$ generated by $\caA_W$.
Now $(W^\alg)^\C$, the complexification $W^\alg+W^\alg i\subset H$, is a torsion free 
$\caA^\C_W$-module and this recovers the rank one torsion
free coherent sheaf $\caL$ equipped with the trivialisation implicit in the inclusion
$(W^\alg)^\C\subset H^\alg_+\oplus H_-$. 

Now, in analogy with the KP case, it is clear that we can assign a commutative algebra $\caA_W$
to \emph{any} $W\in Gr_\QKP$, but in general this will not be very useful: typically $\caA_W=\R$,
and even if it is not so trivial we only obtain spectral data of the type we desire when
$(W^\alg)^\C$ is locally rank one. Nevertheless, we can always obtain $\caA^\rho$ as a commutative
algebra of differential operators over $\H$, and this connects us to the stationary QKP flows.
The proof of the following lemma is obtained \textit{mutatis mutandis} from 
\cite[Remark 6.4]{SegW}.
\begin{lem}
Given $W\in Gr_\QKP$ to each $h\in\caA_W$ there is a unique pseudo-differential operator 
$P(h)\in Z_0(L_W)$ for which the differential operator part $P(h)_+$ satisfies 
$P(h)_+\psi_W = \psi_Wh$. The $\R$-algebra $\{P(h)_+;h\in\caA_W\}$ is 
isomorphic to $\caA_W$. It follows that $[P(h)_+,L_W]=0$ and therefore the QKP solution 
corresponding to $W$ is stationary for every flow $\partial_{P(h)}$.
\end{lem}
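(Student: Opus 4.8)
The plan is to follow \cite[Remark 6.4]{SegW}, transplanted to the quaternionic Grassmannian, using throughout that $\psi_W(\bt)$ takes values in $W$, that $K_W^{-1}\psi_W=\gamma(\bt)$, and that $\dy\gamma(\bt)=i\zeta\,\gamma(\bt)$, so $\dy^m\gamma(\bt)=(i\zeta)^m\gamma(\bt)$ for all $m\in\Z$ and $(i\dy^{-1})^k\gamma(\bt)=\zeta^{-k}\gamma(\bt)$. Since right multiplication by $h$ carries $W^\alg$ to $W^\alg$, and therefore preserves polynomiality of the projection onto $H_+$, the function $h$ has only finitely many positive Fourier modes; write $h(\zeta)=\sum_{j\le N}h_j\zeta^j$ with $h_j\in\C$ and let $h_+(\zeta)=\sum_{j=0}^N h_j\zeta^j$ be its polar part at $P$.

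The first step is to produce a differential operator $A_h=\sum_{j=0}^N c_j(\bt)\dy^j$ in $\dy$, with $\H$-valued coefficients, for which $A_h\psi_W=\psi_W h$. I would build it by matching Fourier coefficients in $\zeta$ from the top down: because $\dy^j\psi_W=i^j\zeta^j\gamma(\bt)+O(\zeta^{j-1})\gamma(\bt)$, this is a triangular system (with $c_N=h_N i^{-N}$, and so on downward), and it forces $(A_h\psi_W-\psi_W h)\gamma(\bt)^{-1}\in H_-$. To promote this to the exact identity $A_h\psi_W=\psi_W h$ — and this is the one step that calls for genuine care, being the quaternionic counterpart of the content of \cite[Remark 6.4]{SegW} — I would argue that $A_h\psi_W(\bt)$ lies in $W$ (a differential operator in $\dy$ with $\H$-valued coefficients preserves $W$, since $W$ is closed and left $\H$-invariant) and that $\psi_W(\bt)h$ lies in $W$ (because $h\in\caA_W$); by Theorem \ref{thm:bigcell} the orbit $W\gamma(\bt)^{-1}$ meets the big cell for generic $\bt$, where it is a graph over $H_+$ so that an element is determined by its $H_+$-component, whence $A_h\psi_W=\psi_W h$ for generic $\bt$ and hence, by real-analyticity in $\bt$, for all $\bt$. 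Uniqueness of $A_h$ is immediate: if a differential operator $R$ in $\dy$ satisfies $R\psi_W=0$, its leading coefficient is forced to zero by the $\zeta\to\infty$ asymptotics, and one descends inductively to get $R=0$.

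Next I would identify $A_h$ with the differential part of an element of $Z_0(L_W)$. Let $\hat B:=\sum_{m\le N}h_m i^{-m}\dy^m$ be the constant-coefficient (pseudo)differential operator with symbol $h$, so $\hat B\gamma(\bt)=h(\zeta)\gamma(\bt)$, and split $\hat B=\hat P+\hat P_-$ with $\hat P:=\sum_{m=0}^N h_m i^{-m}\dy^m\in\C[\dy]$ and $\hat P_-$ of order $\le-1$. Then $(A_h-K_W\hat B K_W^{-1})\psi_W=\psi_W h-K_W\big(h(\zeta)\gamma(\bt)\big)=0$, since $K_W(h(\zeta)\gamma(\bt))=\psi_W h$; and any pseudodifferential operator $Q$ with $Q\psi_W=0$ vanishes, because $(K_W^{-1}QK_W)\gamma(\bt)=0$ and the functions $(i\zeta)^m$ are linearly independent, forcing all coefficients of $K_W^{-1}QK_W$ to be zero. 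Hence $A_h=K_W\hat B K_W^{-1}=K_W\hat P K_W^{-1}+K_W\hat P_- K_W^{-1}$; setting $P(h):=K_W\hat P K_W^{-1}\in Z_0(L_W)$ and comparing differential parts (note $K_W\hat P_- K_W^{-1}$ has order $\le-1$) gives $P(h)_+=A_h$, so $P(h)_+\psi_W=\psi_W h$. This $P(h)$ is the unique element of $Z_0(L_W)$ with this property, by the uniqueness of $A_h$ together with injectivity of the dressing map $\C[\dy]\to Z_0(L_W)$.

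The remaining assertions are then formal. The map $h\mapsto P(h)_+=A_h$ is $\R$-linear, and multiplicative since $A_{h_1}A_{h_2}\psi_W=A_{h_1}(\psi_W h_2)=(A_{h_1}\psi_W)h_2=\psi_W h_1h_2=A_{h_1h_2}\psi_W$ — here one uses that a differential operator in $\dy$ with $\H$-valued coefficients commutes with right multiplication by functions of $\zeta$ — combined with the vanishing criterion for differential operators above; and it is injective because $A_h\psi_W=\psi_W h$. Thus $\{P(h)_+:h\in\caA_W\}$ is an $\R$-subalgebra of differential operators isomorphic to $\caA_W$. Finally, since $P(h)\in Z_0(L_W)$ we have $[P(h),L_W]=0$, so $[P(h)_+,L_W]=-[P(h)_-,L_W]$ has order $\le0$; on the other hand $[P(h)_+,L_W]\psi_W=P(h)_+(-\psi_W\zeta)-L_W(\psi_W h)=-\psi_W h\zeta+\psi_W\zeta h=0$, because $\zeta$ and $h$ commute while $P(h)_+$ and $L_W$ commute, respectively, with right multiplication by $\zeta$ and by $h(\zeta)$. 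Since a pseudodifferential operator killing $\psi_W$ vanishes, $[P(h)_+,L_W]=0$; by \eqref{eq:QKP} this is precisely $\partial_{P(h)}L_W=[L_W,P(h)_+]=0$, so $L_W$, hence the QKP solution attached to $W$, is stationary under the flow $\partial_{P(h)}$.
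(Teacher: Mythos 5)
Your argument is correct and is essentially the paper's own proof: the paper gives no details, simply asserting that the proof is obtained \emph{mutatis mutandis} from Segal--Wilson's Remark 6.4, and your write-up is a faithful quaternionic transcription of exactly that dressing argument (triangular construction of $A_h$, the big-cell/graph argument to upgrade $A_h\psi_W-\psi_Wh\in H_-\gamma$ to equality, identification $A_h=K_W\hat P K_W^{-1}+O(\dy^{-1})$, and the ``kills $\psi_W$ hence vanishes'' criterion for the remaining assertions). The only point stated a little glibly is that $h\in\caA_W$ has finitely many positive Fourier modes, but this is precisely the finite-order property established in Segal--Wilson \S 6 which the paper's citation already presupposes, so it is not a gap relative to the paper's own standard of proof.
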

One consequence of this lemma is that $L_W$ admits only finitely many independent non-stationary
QKP flows
precisely when the algebra $\caA_W$ possesses an element of every sufficiently high order, and
therefore $(W^\alg)^\C$ is locally rank one. Hence $L_W$ is a solution of finite type.

\subsection{The curve of Darboux transforms.} 
Assume we have a \qh torus $f:\C/\Lambda\to\HP^n$ obtained from spectral 
data $(X,\rho,P,\caL,\fq)$ as described above. According to Bohle et al.\ \cite{BohLPP} there will be
a holomorphic curve in $\CP^3$ given by the Darboux transforms of any torus in $\HP^1$ obtained from
$f$ by projection.
We are going to use the interpretation of the Darboux transform given in \eqref{eq:DT} to work
directly with $f$ and view the
curve of Darboux transforms as an algebraic curve in $\CP^{2n+1}$.
For this purpose, let $f^Q$ be the \qh torus obtained from the
transformation \eqref{eq:DT}. Any projection of $f$ onto $\HP^1$ then has a Darboux transform by
applying the same projection to $f^Q$ (see Remark \ref{rem:T}).

Since $f^Q=f^{\rho Q}$, a direct consequence of 
Theorem \ref{thm:darboux} is a geometric realisation of the Klein surface $X_\fq^\sm/\rho$
(recall that $X_\fq^\sm\subset X\backslash\fq$ is the subvariety of smooth points), 
since we obtain from it a map
\begin{equation}\label{eq:F}
F: (\C/\Lambda)\times (X_\fq^\sm/\rho) \to\HP^n;\ (p,Q+\rho Q)\mapsto f^Q(p).
\end{equation}
Here we define $f^P=f$. An immediate consequence of Theorems
\ref{thm:composite} and \ref{thm:darboux} is that this factors through the generalised Jacobian
$\Jac(X_\fq)$ via
\[
(\C/\Lambda)\times (X_\fq^\sm/\rho)\to\Jac(X_\fq);\ 
(z,Q+\rho Q)\mapsto \ell^\fq(z)\otimes\caO_{X_\fq}(Q+\rho Q-P-\rho P).
\]
Post-composition of this with $T\circ\theta:\Jac(X_\fq)\to\HP^n$ (cf.\ Remark \ref{rem:T}) 
gives us $F$. 
For each $p\in\C/\Lambda$ let us define 
\[
\xi_p:X_\fq^\sm/\rho\to\HP^n;\quad \xi_p(Q) = F(p,Q)=f^Q(p).
\]
This has a natural twistor lift, an algebraic map from $X_\fq^\sm$ into $\CP^{2n+1}$, which factors
through the Abel map into the generalised Jacobian $\Jac(X_\fq)$, as a consequence of Theorem 
\ref{thm:composite}. 

Let $S^2X_\fq^\sm$ denote the symmetric product of $X_\fq^\sm$ with itself; equally, think of it 
as the set of
all divisors of degree two supported on $X_\fq^\sm$. Because $X_\fq^\sm$ excludes $\fq$ and all 
singularities of $X$, this symmetric product admits an Abel map
\begin{equation}\label{eq:Abel}
\caA_{P+\rho P}^\fq:S^2X_\fq^\sm\to\Jac(X_\fq);\quad A+B\mapsto \caO_{X_\fq}(A+B-P-\rho P).
\end{equation}
We can embed $X_\fq^\sm$ algebraically in $S^2X_\fq^\sm$ via $Q\mapsto Q+\rho P$; we can also embed
$X_\fq^\sm/\rho$
real algebraically by thinking of it as the curve of pairs $Q+\rho Q$ for $Q\in X_\fq^\sm$. By
post-composing each of these with the Abel map \eqref{eq:Abel} we obtain
\begin{eqnarray*}
\alpha:X_\fq^\sm\to\Jac(X_\fq);&\quad Q\mapsto\caO_{X_\fq}(Q-P),\\
\beta:X_\fq^\sm/\rho\to\Jac(X_\fq);&\quad Q+\rho Q\mapsto \caO_{X_\fq}(Q+\rho Q -P-\rho P).
\end{eqnarray*}
From the discussion above we see that, if $\caL$ corresponds to the base point $p\in\C/\Lambda$, 
\[
\xi_p = T\circ\theta\circ\beta.
\]
We define
\begin{equation}\label{eq:hatxi}
\hat\xi_p=\theta\circ\alpha:X_\fq^\sm\to\CP^{2n+1}.
\end{equation}
The image of $\hat\xi_p$ will be called the \emph{Darboux spectral curve}. It is clearly an
algebraic curve.
\begin{thm}\label{thm:Darbouxcurve}
$\hat\xi_p$ is a twistor lift of $\xi_p$.
\end{thm}
This can be summarised by the following commutative diagram, in which the top line is the algebraic
map $\hat\xi_p$ and the bottom line is $\xi_p$.
\begin{equation}
\begin{array}{cccccc}
X_\fq^\sm & \stackrel{\alpha}{\rightarrow}&\Jac(X_\fq) & \stackrel{\theta}{\rightarrow}&\CP^{2n+1}\\
\downarrow & & & & \downarrow{T}\\
X_\fq^\sm/\rho&\stackrel{\beta}{\rightarrow}&J_\R(X_\fq)&\stackrel{T\circ\theta}{\rightarrow} & \HP^n
\end{array}
\end{equation}
\begin{proof}
First we note that 
$\caO_{X_\fq}(A+B-P-\rho P)$ can be thought of as the bundle $\caO_X(A+B-P-\rho P)$ together with
the fibre identification over $\fq$ uniquely determined by the rational section with divisor
$A+B-P-\rho P$ (there is only one of these, up to scaling). Consequently the canonical trivialisation 
of the complex projective bundle $\P\caE'$
over $\Jac(X_\fq)$ (described in the proof of Theorem \ref{thm:composite}) works as follows over
$\alpha(X_\fq^\sm)$. Let $\sigma_Q$ be a non-zero rational section of $\caO_X(Q-P)$ with divisor $Q-P$.
Recall from \eqref{eq:Lq} that $\caL_\fq$ denotes the skyscraper sheaf which is $\caL$ restricted to
$\fq$. The canonical trivialisation of $\P\caE'$ identifies fibres over $\alpha(X_\fq^\sm)$ via
the isomorphism 
\[
\iota_1:H^0(X,\caL(Q-P)_\fq)\to H^0(X,\caL_\fq);\quad s\mapsto s/\sigma_Q,
\]
thinking of $\sigma_Q$ restricted to $\fq$. Similarly, over $\beta(X_\fq^\sm)$ we have an
isomorphism
\[
\iota_2:H^0(X,\caL^Q_\fq)\to H^0(X,\caL_\fq);\quad s\mapsto s/(\sigma_Q{\bar\rho^*\sigma_Q}).
\]
Now recall from \eqref{eq:twistor} that $\theta$ is the result of applying this trivialisation to 
the line subbundle of $\caE'$ which picks out the complex line
\begin{equation}\label{eq:theta}
H^0(X,\caL(-\rho P)\otimes L)\subset H^0(X,(\caL\otimes L)_\fq)=\caE'_{L'},\quad L=\pi_\fq(L'),
\end{equation}
while its twistor projection $T\circ\theta$ corresponds to the quaternionic line
\[
H^0(X,\caL\otimes L)\subset H^0(X,(\caL\otimes L)_\fq).
\]
The statement of the theorem is that
\begin{equation}\label{eq:alphabeta}
T\circ\theta\circ\alpha(Q) = T\circ\theta\circ\beta(Q+\rho Q). 
\end{equation}
To prove this, consider 
\begin{equation*}
\iota_2^{-1}\circ\iota_1(s)= s{\bar\rho^*\sigma_Q},
\end{equation*}
when $s$ is a non-zero globally holomorphic section of $\caL(Q-P)$ with a zero at
$\rho P$. 
The result is a globally holomorphic section of $\caL^Q$, since the simple pole of
$\bar\rho^*\sigma_Q$ at $\rho P$ is cancelled by the zero of $s$. Thus $\iota_1(s)$ lies in the
$\H$-subspace of $H^0(X,\caL_\fq)$ given by the image of $H^0(X,\caL^Q)$ under $\iota_2$.
This proves \eqref{eq:alphabeta}.
\end{proof}

\subsection{The multiplier spectrum.}
Here we will examine how the multiplier spectrum $\Sp(L^*,D)$ is related to the
spectral curve $X$ in the case of a \qh curve $L$ arising from
spectral data in the manner of Section \ref{subsec:finite}. 

Suppose that we have a conformally immersed torus $f:\C/\Lambda\to\HP^1$ of finite type,
with spectral data $(X,\rho,P,\caL,\fq)$, i.e., we suppose
that the map $\ell^\fq:\C\to J_\R(X)$ is $\Lambda$-periodic. We may as well assume $\fq$ is the
largest $\rho$-invariant divisor of distinct smooth points which we can choose with this property.
We will show how the multiplier spectrum $\Sp(L^*,D)$ arises from the sections of $\pi^*L^*$
obtained by evaluating the global Baker function at different points of $X\setminus\{P,\rho P\}$.

To $X$ we can assign the subgroup
\[
\Gamma_X = \{\gamma\in\Gamma;\gamma\text{ extends hol.\ to }h:X\setminus\{P,\rho P\}\to\C^\times,\
\overline{\rho^*h} = h\}.
\]
Notice that $\Gamma_-\cap\Gamma_X=\R^+$ while $\Gamma_0\cap\Gamma_X=\{1\}$. 
Clearly we have an exact sequence
\[
1\to\Gamma_-\Gamma_0\Gamma_X\to\Gamma\stackrel{\ell}{\to} J_\R(X)\to 1.
\]
In particular this gives us homomorphisms
\[
\mu:\ker(\ell)\to\Gamma_0;\quad\chi:\ker(\ell)\to\Gamma_X
\]
defined by the unique factorisation
\[
\gamma = \gamma_-\mu(\gamma)^{-1}\chi(\gamma),\quad \gamma\in\ker(\ell),
\]
where $\gamma_-$ is normalised by $\gamma_-=1+O(\zeta^{-1})$.
\begin{lem}\label{lem:monodromy}
The global Baker function $\psi_W$ for this spectral data satisfies
\begin{equation}\label{eq:muchi}
\psi_W(\bt' +\bt) = \mu(\gamma)^{-1}\psi_W(\bt')\chi(\gamma),
\end{equation}
whenever $\gamma=\gamma(\bt)\in\Gamma_+\cap\ker(\ell)$. 
\end{lem}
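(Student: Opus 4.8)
The plan is to run everything through the twisting construction for the line bundle $\ell(\gamma)$, exactly as in Lemma \ref{lem:twist}, and then read off the monodromy from the factorisation $\gamma=\gamma_-\mu(\gamma)^{-1}\chi(\gamma)$. First I would recall that the global Baker function, by \eqref{eq:global} and the characterisation via $\sigma_\gamma$, is obtained by twisting the spectral data: for $\gamma=\gamma(\bt)\in\Gamma_+$ we have $\psi_W(\bt'+\bt)=w(\sigma_{\gamma(\bt'+\bt)})$ built from $\caL\otimes\ell(\gamma(\bt'+\bt))$. Since $\ell$ is a homomorphism, $\ell(\gamma(\bt'+\bt))=\ell(\gamma(\bt'))\otimes\ell(\gamma)$, so when $\gamma\in\ker(\ell)$ the underlying line bundle is unchanged, only the trivialisation $\varphi$ is altered by the factor $\gamma$. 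The point is then to push the factorisation $\gamma=\gamma_-\mu(\gamma)^{-1}\chi(\gamma)$ through the construction of the global Baker function.

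Concretely, I would argue as follows. The factor $\gamma_-\in\Gamma_-$ (with $\gamma_-=1+O(\zeta^{-1})$) changes the trivialisation $\varphi$ of $\caL$ over $\Delta_P$ without changing the bundle, and by Remark \ref{rem:change_of_triv} it therefore has no effect on the global Baker function at all (the normalisation $\gamma_-=1+O(\zeta^{-1})$ kills even the $\Gamma_0$-part). The factor $\mu(\gamma)\in\Gamma_0$ acts, by Theorem \ref{thm:periodic}(a) and \eqref{eq:psit_0}, by conjugation-plus-rescaling, but on the Baker function its net effect (see the proof of Theorem \ref{thm:f} and the identity $\psi_{We^{it_0}}=e^{-it_0}\psi_We^{it_0}$) together with the other factors produces the left multiplication $\mu(\gamma)^{-1}$; I would track this carefully via the uniqueness of the Fourier normalisation of the Baker function. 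Finally, $\chi(\gamma)\in\Gamma_X$ extends to a holomorphic $\C^\times$-valued function $h$ on $X\setminus\{P,\rho P\}$ with $\overline{\rho^*h}=h$; multiplying $\sigma_{\gamma(\bt')}$ by this global function does not change the bundle but multiplies the section, hence gives right multiplication of the global Baker function by $\chi(\gamma)$ (interpreted as a function on the spectral curve away from $P,\rho P$). Assembling the three factors in the order dictated by the factorisation gives exactly \eqref{eq:muchi}.

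The main obstacle I anticipate is bookkeeping the non-commutativity: $\mu(\gamma)$ acts on the left (it comes from the $\Gamma_0$-conjugation on the fibre identification, cf.\ the $S^1$-bundle $J_\R(X_\fp)\to J_\R(X)$) while $\chi(\gamma)$ acts on the right (as multiplication by a function on $X$), and $\gamma_-$ must be shown genuinely inert. The cleanest way to organise this is to appeal to uniqueness: the right-hand side of \eqref{eq:muchi} is a function with values in $W\gamma$ after dividing by $\gamma(\bt'+\bt)$ — actually in $W$ itself since $\gamma\in\ker(\ell)$ only permutes trivialisations — and one checks it has the correct leading Fourier normalisation $1+O(\zeta^{-1})$ relative to $\gamma(\bt'+\bt)$; then Theorem \ref{thm:dressing} (uniqueness of the Baker function) forces it to equal $\psi_W(\bt'+\bt)$. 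The subtle point in that verification is that the $\Gamma_0$-factor $\mu(\gamma)^{-1}$ on the left does \emph{not} spoil the normalisation, precisely because $e^{it_0}$ is unimodular and acts by conjugation on the $O(\zeta^{-1})$ coefficients; this is where one must be most careful, but it is a direct consequence of the computation already done in the proof of Theorem \ref{thm:periodic}(a).
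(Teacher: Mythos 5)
Your proposal is correct, and it actually contains two routes: the first two paragraphs reproduce, in outline, the paper's own argument (push the factorisation $\gamma=\gamma_-\mu(\gamma)^{-1}\chi(\gamma)$ through the twisting of $(\caL,\varphi)$ by $\ell(\gamma)$, using the explicit description of $\psi_W$ via the normalised section $\sigma_\gamma$ of $\caL(-\rho P)\otimes\ell(\gamma)$), while your ``cleanest way'' in the last paragraph is a genuinely different and tighter closing step that the paper does not use. The paper never invokes uniqueness of the Baker function here; instead it computes directly, the key point being that the isomorphism $\caL\otimes\ell(\gamma)\simeq\caL$ identifies the twisted trivialisation $\varphi(\gamma)$ with $\varphi\gamma_-$, so that $\psi_W(\gamma)=(\sigma/\varphi - j\bar\rho^*\sigma/\varphi)\gamma_-^{-1}\gamma=\psi_W(1)\chi(\gamma)$, the $\Gamma_0$-factor having been stripped off first via \eqref{eq:psit_0}. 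Your uniqueness argument replaces all of this geometric bookkeeping by the verification that $\mu(\gamma)^{-1}\psi_W(\bt')\chi(\gamma)$ takes values in $W$ (because $W$ is a left $\H$-module and $W\chi(\gamma)=W$, the latter following from $w(h\sigma)=w(\sigma)h$ for $h\in\Gamma_X$) and that
\[
\mu(\gamma)^{-1}\bigl(\psi_W(\bt')\gamma(\bt')^{-1}\bigr)\mu(\gamma)\,\gamma_-^{-1}=1+O(\zeta^{-1}),
\]
which is exactly the characterising property of $\psi_W(\bt'+\bt)$; this buys you a proof that avoids trivialisation chases entirely, at the cost of having to extend the resulting identity from the circle $C_P$ to all of $X\setminus\{P,\rho P\}$ by analyticity (which you do note). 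One imprecision worth fixing: your claim that $\gamma_-$ ``has no effect on the global Baker function at all'' by Remark \ref{rem:change_of_triv} is not right as stated --- that remark says a $\Gamma_-$ change of trivialisation multiplies $\psi_W$ on the right by $\gamma_-$, and in the paper's computation $\gamma_-$ is very much present; it disappears only because the $\gamma_-^{-1}$ coming from the trivialisation cancels the $\gamma_-$ sitting inside $\gamma$. Your uniqueness argument handles this correctly in any case, since there $\gamma_-^{-1}=1+O(\zeta^{-1})$ is simply absorbed into the Fourier normalisation.
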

\begin{proof}
The transformation due to $\Gamma_0$ comes directly from equation \eqref{eq:psit_0}. So let us
assume $\gamma$ has trivial $\Gamma_0$ factor and that $\ell(\gamma)\simeq\caO_X$. Since
we are dealing with a global Baker function there is a trivialisation $\varphi$ over $P$ which
extends holomorphically to $X$. Let $\varphi(\gamma)$ be the
trivialisation for $\caL\otimes\ell(\gamma)$ obtained by twisting the $1$-cocycle for
$(\caL,\varphi)$ by $\gamma$. Then
\[
\psi_W(\gamma) = (\frac{\sigma}{\varphi(\gamma)}-
j\frac{\bar\rho^*\sigma}{\varphi(\gamma)})\gamma,
\]
where $\sigma$ is the global section of $\caL(-\rho P)$ normalised by $\sigma|_P=\varphi|_P$. Since
$\gamma = \gamma_-\chi(\gamma)$ by assumption, the isomorphism $\caL\otimes\ell(\gamma)\simeq\caL$
equates $\varphi\gamma_-$ with $\varphi(\gamma)$, hence
\[
\psi_W(\gamma) = (\frac{\sigma}{\varphi} - j\frac{\bar\rho^*\sigma}{\varphi})\gamma_-^{-1}\gamma = 
\psi_W(1)\chi(\gamma).
\]
Equation \eqref{eq:muchi} follows easily from this by replacing $\caL$ with $\caL\otimes\ell(\gamma')$.
\end{proof}
Now let us restrict $\ell$ to $\Gamma_1\simeq\C$. Here it has kernel $\Lambda$ and we obtain 
homomorphisms
\[
\mu:\Lambda\to S^1;\quad\chi:\Lambda\to\Gamma_X.
\]
We may think of $\chi$ as a function on $\Lambda\times X\setminus\{P,\rho P\}$.
Thus to each point $Q\in X\setminus\{P,\rho P\}$ we get a function $\psi_W(z,Q):\C\to\H$ with the
properties
\[
\caD\psi_W(z,Q)=0,\quad \psi_W(z+\lambda,Q) = \mu(\lambda)^{-1}\psi_W(z,Q)\chi(\lambda,Q).
\]
Hence $\chi(\lambda,Q)\in \Sp(L^*,D)$.
\begin{thm}
Let $f:\C/\Lambda\to\HP^1$ be a non-constant conformal immersion of finite type from $Gr_\QKP$, 
with global Baker function
$\psi(z,Q)$ on $\C\times X\setminus\{P,\rho P\}$, and for which the flat bundle $L^*$ has monodromy
$\mu\in\Hom(\Lambda,S^1)$. Define
\[
\chi:\Lambda\times X\setminus\{P,\rho P\};\quad \chi(\lambda,Q) =
\psi(0,Q)^{-1}\mu(\lambda)\psi(\lambda,Q).
\]
For any pair of generators $\lambda_1,\lambda_2$ of $\Lambda$ the holomorphic map
\begin{equation}\label{eq:XtoSp}
X\setminus\{P,\rho P\}\to \Sp(L^*,D);\quad Q\mapsto (\chi(\lambda_1,Q),\chi(\lambda_2,Q))
\end{equation}
is surjective onto $\Sp(L^*,\caD)$. Moreover, this map factors through the covering map 
$X\setminus\{P,\rho P\}\to X_\fq\setminus\{P,\rho P\}$. 
\end{thm}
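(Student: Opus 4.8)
The plan is to read off both assertions from structure already in place: the commutative ring $R=\{P(h)_+:h\in\caA_W\}\cong\caA_W\cong\caA^\rho$ attached to $W$ in \S\ref{subsec:QKPcurve}, and the group-theoretic descriptions of $\ell$, $\ell^\fq$ and of $\mu,\chi$.

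For the factoring through $X_\fq\setminus\{P,\rho P\}$ I would argue as follows. By construction $\mu(\lambda),\chi(\lambda)$ come from the unique factorisation $g_\lambda=\gamma_-\mu(\lambda)^{-1}\chi(\lambda)$ of $g_\lambda:=\exp(\lambda\zeta)\in\Gamma_1$, available because $g_\lambda\in\ker(\ell)=\Gamma_-\Gamma_0\Gamma_X$. Since $f$ is of finite type, $\ell^\fq$ is $\Lambda$-periodic, so $g_\lambda$ lies in $\ker(\ell^\fq)$ as well, which the lemma identifying $\caJ_{W_\fq}\simeq J_\R(X_\fq)$ computes as $\Gamma_-\Gamma_0\Gamma^\P_{W_\fq}$, where $\Gamma^\P_{W_\fq}\subset\Gamma_X$ is precisely the subgroup of functions that are constant on $\fq$ (equivalently, descend to $X_\fq\setminus\{P,\rho P\}$). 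Since $\ker(\ell^\fq)\subset\ker(\ell)$, comparing the two factorisations of $g_\lambda$ and using uniqueness of the $\Gamma_-\Gamma_0\Gamma_X$-decomposition will force $\chi(\lambda)\in\Gamma^\P_{W_\fq}$ for each $\lambda\in\Lambda$. Hence $\chi(\lambda,\cdot)$ takes a common value on $\fq$, and $Q\mapsto(\chi(\lambda_1,Q),\chi(\lambda_2,Q))$ descends along $X\setminus\{P,\rho P\}\to X_\fq\setminus\{P,\rho P\}$.

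For surjectivity, write $\iota(Q)=(\chi(\lambda_1,Q),\chi(\lambda_2,Q))$; we already know $\iota$ is a holomorphic map into $\Sp(L^*,\caD)$. First I would show $\iota$ is proper onto $\Sp(L^*,\caD)$: near $P$ the expansion $\psi_W(z,Q)=(1+a_1(z)\zeta^{-1}+\cdots)e^{z\zeta}$ gives $\chi(\lambda,Q)\sim\mu(\lambda)e^{\lambda\zeta}$, so $\iota(Q)$ leaves every compact subset of $(\C^\times)^2$ as $Q\to P$, and similarly as $Q\to\rho P$; since $X$ is compact it follows that $\iota(X\setminus\{P,\rho P\})$ is a closed, one-dimensional subvariety of $\Sp(L^*,\caD)$. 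Next I would prove the image is dense. For $\chi\in\Sp(L^*,\caD)$ put $V_\chi=\{\psi\in\ker\caD:\lambda^*\psi=\mu(\lambda)^{-1}\psi\chi(\lambda)\ \forall\lambda\in\Lambda\}\neq 0$, a finite-dimensional right-$\C$ space. From $\caD\psi_W=0$ and $P(h)_+\psi_W=\psi_W h$, with $h$ independent of $z$, applying $\caD$ yields $[\caD,P(h)_+]\psi_W=0$ for all $\zeta$, and since the $\psi_W(\cdot,\zeta)$ span infinitely many functions this forces $[\caD,P(h)_+]=0$; moreover $\Lambda$-periodicity of $\ell$ makes $\psi_W$, hence $L_W$, and hence each $P(h)_+$, obey $P(h)_+(z+\lambda)=\mu(\lambda)^{-1}P(h)_+(z)\mu(\lambda)$. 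Therefore $R$ preserves $V_\chi$, so $V_\chi$ is a nonzero finite module over $\caA^\C_W\cong\caO(X\setminus\{P,\rho P\})$ and supports a common eigenfunction $\psi_0$ whose eigenvalue-character is a point $Q\in X\setminus\{P,\rho P\}$. By the usual uniqueness of the Baker eigenfunction, for $\chi$ outside a proper analytic subset the simultaneous $R$-eigenspace in $\ker\caD$ with character $Q$ is one-dimensional over $\C$ and spanned by $\psi_W(\cdot,Q)$, so $\psi_0$ is a complex multiple of $\psi_W(\cdot,Q)$ and $\chi=\chi(\cdot,Q)=\iota(Q)$. Density together with closedness then gives surjectivity.

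The step I expect to be the genuine obstacle is this last one: controlling the proper analytic subset on which the simultaneous eigenspace is not one-dimensional, and ruling out that $\Sp(L^*,\caD)$ carries a component (a compact one, or a spurious branch) that $\iota$ misses --- that is, upgrading ``dense image'' to ``all of $\Sp(L^*,\caD)$''. Here I would lean on the fine structure of the multiplier spectrum established in \cite{BohLPP,BohPP}: away from a discrete set it is a smooth curve with one-dimensional eigenspaces, and its only ends are the two over $C$ and $\bar C$, which $\iota$ reaches at $P$ and $\rho P$. The remaining verifications (that $[\caD,P(h)_+]=0$ exactly as claimed, and that $P(h)_+$ really inherits the $\mu$-conjugation law of $L_W$) are routine but should be written out, since they are what makes $V_\chi$ a module over $\caA^\rho$.
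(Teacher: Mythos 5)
Your argument for the factorisation through $X_\fq$ is sound and takes a genuinely different route from the paper's: the paper simply observes that, because $f$ itself is $\Lambda$-periodic, the evaluations $\psi(z,Q)$ at $Q\in\fq$ descend to quaternionic holomorphic sections of $L^*$ and hence have \emph{trivial} multiplier, so all of $\fq$ is sent to the single point $(1,1)$; your group-theoretic comparison of the factorisations of $e^{\lambda\zeta}$ inside $\ker(\ell)$ and $\ker(\ell^\fq)$ yields the weaker but still sufficient statement that $\chi(\lambda,\cdot)$ is constant on $\fq$. One small correction there: $\Gamma^\P_{W_\fq}$ is \emph{not} a subgroup of $\Gamma_X$ --- its elements need only extend holomorphically to $X_0$, not to all of $X\setminus\{P,\rho P\}$ --- but the comparison still forces $\chi(\lambda)$ to agree on $X_0$ with a constant multiple of an element of $\Gamma^\P_{W_\fq}$, which is all you need.

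The surjectivity argument, however, has a genuine gap exactly where you flag it, and it is not a technicality. Your density step needs that, for generic $\chi$, the simultaneous eigenspace of your ring $R$ inside $\ker(\caD)$ with character $Q$ is one-dimensional and spanned by $\psi_W(\cdot,Q)$ --- i.e., that every element of $\ker(\caD)$ which is a joint eigenfunction with character $Q$ is a right complex multiple of the QKP Baker function. That is essentially the identification of the multiplier Baker function with the QKP Baker function, which the paper singles out as a delicate unresolved point (settled here only because $\ell$, hence $L_W$, is assumed periodic --- and even then not via an eigenspace count). Moreover, the patch you propose, invoking the structure of $\Sp(L^*,D)$ from \cite{BohLPP,BohPP}, makes the entire eigenfunction machinery redundant: the paper's proof consists precisely of noting that the image of \eqref{eq:XtoSp} is a positive-dimensional analytic subvariety of $\Sp(L^*,D)$ containing an annulus about each of the two ends (one from a punctured neighbourhood of $P$, the other from $\rho P$ via the real involution), and that $\Sp(L^*,D)$ has at most two irreducible components, one containing each end; hence the image is all of $\Sp(L^*,D)$. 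The clean fix is therefore to discard the density argument and combine your properness observation directly with this irreducible-component count.
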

\begin{proof}
This map is clearly holomorphic and non-constant when $f$ is non-constant, and therefore the
image is an analytic subvariety of $\Sp(L^*,D)$. The image also contains annuli about each of
$P,\rho P$, by the symmetry of the real involution. Since $\Sp(L^*,D)$ has at most two
irreducible components, one about each of $P,\rho P$, the image must agree with $\Sp(L^*,D)$.
At each point of $Q\in \fq$ we know from  Theorem \ref{thm:fperiodic} 
that $\psi(z,Q)$ corresponds to a \qh section of $L^*$, i.e., it has trivial multiplier. Hence
the map \eqref{eq:XtoSp} factors through $X_\fq$.
\end{proof}

\subsection{A comparison of spectral curves.}
Taimanov \cite{Tai04} initially proposed that, when it has finite genus, the normalisation 
$\Sigma$ of $\Sp(L^*,D)$ should be the spectral curve, and this
is the definition used in \cite{BohLPP,BohPP}. But $\Sigma$ lacks
the subtlety necessary to be useful, because it throws away crucial information. It does this at
two levels: (i) it throws away the information contained in the divisor $\fq$, (ii) unless $X$ is
smooth, which it need not be, it throws away the information of singularities in $X$. Examples
of the latter case have been discussed by Taimanov himself, in the context of the generalised
Weierstrass representation, in \cite{Tai03,Tai05,Taisurv}.

The virtue of the $\Sp(L^*,D)$ is that it is directly constructed from the Dirac
operator, equally, the \qh structure $(L^*,D)$. This means that it is more properly
an invariant of the \qh curve $f:\C/\Lambda\to\HP^n$ given by \eqref{eq:Kodaira} 
(where $n+1=\dim_\H H^0_D(E)$). Provided we take $\fq$ to 
be the maximal divisor on which the multiplier of the Baker function is trivial, $X_\fq$ is 
likewise an invariant of this \qh curve. Congruence of $f$ in $\HP^n$, which is
the natural equivalence relation on such curves, gives a broader equivalence than
congruence of any of the projections of $f$ into $S^4$, but it is clear from all the discussions 
above that this is the correct notion of
equivalence from the point of view of spectral data. The virtue of $X_\fq$ over $\Sp(L^*,D)$ is
that it is algebraic and has no spurious singularities. 

For example, in the case of Example \ref{ex:HSL} earlier, $\fq$ is a divisor of distinct
points on $X\simeq\Ci$, and therefore $X_\fq$ is a partial resolution of
$\Sp(L^*,D)$: it keeps the essential information about where the multipliers are trivial but
discards the singularities caused by its immersion into the plane by \eqref{eq:XtoSp}.

This prompts the question of how to describe $X_\fq$ as a cover of $\Sp(L^*,D)$ without passing
through the QKP construction. I suspect the answer is something like the following: the kernel 
of the
Dirac operator should determine over $\Sp(L^*,D)$ a coherent analytic sheaf $\caE$ whose sections
represent functions $\psi(z,\zeta)$ which satisfy $\caD\psi=0$ for each $\zeta$, are holomorphic in
$\zeta$, and have the appropriate multiplier at $\zeta$. The sheaf of algebras
$\mathit{Hom}(\caE,\caE)$ would be the model for the structure sheaf of $X\setminus\{P,\rho P\}$. 
Notice that this is
consistent with the construction of $X$ as the compactification of $\Spec(\caA^\C_W)$ since
$\caA_W$ is isomorphic to an algebra of operators preserving the kernel of $\caD$.  
This instantly makes $\caE$ a maximal sheaf over $X\setminus\{P,\rho P\}$ which should be 
both rank one and torsion free.

\end{document}